\newtheorem{theorem}{Theorem}[section]
\newtheorem{lemma}[theorem]{Lemma}
\newtheorem{proposition}[theorem]{Proposition}
\newtheorem{cor}[theorem]{Corollary}
\newtheorem{fact}[theorem]{Fact}
\newtheorem{question}{Question}
\numberwithin{equation}{section}
\theoremstyle{definition}
\newtheorem{definition}[theorem]{Definition}
\theoremstyle{remark}
\newtheorem{remark}[theorem]{Remark}
\newtheorem{example}[theorem]{Example}
\newenvironment{sis}{\left\{\begin{aligned}}{\end{aligned}\right.}
\newcommand{\bbR}{{\mathbb R}}
\newcommand{\bbQ}{{\mathbb Q}}
\newcommand{\bbZ}{{\mathbb Z}}
\renewcommand{\cD}{{\mathcal D}}
\newcommand{\cO}{{\mathcal O}}
\newcommand{\un}{\underline}
\newcommand{\ov}{\overline}
\newcommand{\wt}{\widetilde}
\newcommand{\wh}{\widehat}
\newcommand{\m}{\mathcal}
\newcommand{\irr}{\operatorname{irr}}
\newcommand{\type}{\operatorname{type}}
\newcommand{\Spf}{\operatorname{Spf}}
\newcommand{\Proj}{\operatorname{Proj}}
\newcommand{\Aut}{\operatorname{Aut}}
\newcommand{\Pic}{\operatorname{Pic}}
\newcommand{\Cl}{\operatorname{Cl}}
\newcommand{\D}{\operatorname{\Delta}}
\newcommand{\id}{\operatorname{id}}
\newcommand{\Def}{\operatorname{Def}}
\newcommand{\ps}{\operatorname{ps}}
\newcommand{\adm}{\operatorname{adm}}
\renewcommand{\div}{\operatorname{div}}
\newcommand{\car}{\operatorname{char}}
\newcommand{\CC}{\operatorname{\ov{\m C}}}
\newcommand{\M}{\operatorname{\ov{\m M}}}
\newcommand{\MM}{\operatorname{\ov{M}}}
\newcommand{\NE}{\operatorname{NE}}
\newcommand{\NEb}{\operatorname{\ov{NE}}}
\newcommand{\Gm}{\operatorname{\mathbb{G}_m}}
\tikzset{dot/.style={
		circle,
		fill=black,
		inner sep=1.5pt,
}}
\title{On  some modular contractions of the moduli space of stable pointed curves}
\author{Giulio Codogni}
\address{Dipartimento di Matematica, Universit\`a degli Studi di Roma Tor Vergata, Via della ricerca scientifica, 00133 Roma, Italy.}
\email{codogni@mat.uniroma2.it}
\author{Luca Tasin}
\address{Dipartimento di Matematica F.\ Enriques, Universit\`a degli Studi di Milano, Via Cesare Saldini 50, 20133 Milano, Italy} 
\email{luca.tasin@unimi.it}
\author{Filippo Viviani}
\address{Dipartimento di Matematica e Fisica, Universit\`{a} degli Studi Roma Tre,  Largo San Leonardo Murialdo, 00146 Rome, Italy} 
\email{viviani@mat.uniroma3.it}
\subjclass[2010]{14H10, 14E30, 14D23, 14D22}
\begin{document}

\maketitle

\begin{abstract}
	The aim of this paper is to study  some modular contractions of the moduli space of stable pointed curves  $\MM_{g,n}$.  These new moduli spaces, which are modular compactifications  of $\mathrm M_{g,n}$,  are related to the minimal model program for $\MM_{g,n}$ and have been introduced in  \cite{CTV1}. We  interpret them  as log canonical models of adjoint  divisors and  we then  describe the Shokurov decomposition of a region of boundary divisors on $\MM_{g,n}$.
\end{abstract}



\section{Introduction}

The moduli space $\MM_{g,n}$ of stable $n$-pointed  curves of genus $g$ is a natural compactification of the moduli space $M_{g,n}$ of smooth $n$-pointed projective curves of genus $g$ and it is one of the most studied objects in algebraic geometry. Nevertheless, most of its rich birational geometry is still unknown.  In particular, the following two natural questions are still very much open.

\begin{question}[Mumford] \label{Q:1} 
What is the nef cone of $\MM_{g,n}$?
\end{question}

The F-conjecture, usually attributed to Fulton, predicts that a divisor $L$ is nef if and only if it has non-negative intersection with the F-curves
($L$ is called F-nef if it intersects non-negatively all the F-curves), which are the $1$-dimensional strata of the  stratification of $\MM_{g,n}$ by dual graphs; see Section  \ref{S:prel} for details. This would have the striking consequence that the nef cone of $\MM_{g,n}$ is rational polyhedral. In the breakthrough paper \cite{GKM}, Gibney-Keel-Morrison reduce the F-conjecture to the genus $0$ case, which however remains still widely open. In the same paper, the authors pose the following 

\begin{question}[Gibney-Keel-Morrison \cite{GKM}] \label{Q:2}  
What are all the contractions (i.e. separable morphisms with connected fibres to projective varieties) of $\MM_{g,n}$?
\end{question}

Note that the contractions of $\MM_{g,n}$ correspond to the faces of the semiample cone of $\MM_{g,n}$ which is a subcone of the nef cone of $\MM_{g,n}$ (and indeed a proper subcone at least if $g \ge3$, $n>0$ and the characteristic is zero by \cite{Kee}). 
In the  paper \cite{GKM}, the authors prove that any contraction $\MM_{g,n}\to X$ factors through a forgetful morphism $\MM_{g,n}\to \MM_{g,m}$ for some $m\leq n$ followed by a birational contraction $\MM_{g,m}\to X$ that is an isomorphism in the interior $M_{g,m}$. In particular, any birational contraction $\MM_{g,n}\to X$ is an isomorphism on $M_{g,n}$, so that $X$ is a new compactification of $M_{g,n}$. 

In our previous paper \cite{CTV1}, we introduced several new birational contractions $\Upsilon_T:\MM_{g,n}\to \MM_{g,n}^T$, whose codomains $\MM_{g,n}^T$ 
are  weakly modular compactifications of $M_{g,n}$ in the sense of \cite[Sec. 2.1]{FS}. 
Since the number of these birational contractions grows exponentially in $(g,n)$ (see Remark \ref{R:number} for the exact count), this significantly expands the known examples of birational contractions of $\MM_{g,n}$, that previous to \cite{CTV1}, to the best of our knowledge, consisted of the first two steps of the Hassett-Keel program (see \cite{HH13}, \cite{AFSV1, AFS2, AFS3}) and, for $n=0$, the Torelli morphism from $\MM_g$ to the Satake compactification of the moduli space of principally polarized abelian varieties. 

The aim of this paper, which is a sequel of \cite{CTV1}, is to study the geometry of the variety $\MM_{g,n}^T$ and of the birational contraction $\Upsilon_T:\MM_{g,n}\to \MM_{g,n}^T$. 
Moreover, we describe $\Upsilon_T:\MM_{g,n}\to \MM_{g,n}^T$ as the ample model of suitable adjoint divisors on $\MM_{g,n}$ (see Theorem \ref{T:main}).  
With an \emph{adjoint divisor} on $\MM_{g,n}$ we mean a $\mathbb Q$-divisor $L$ of the form $K_{\M_{g,n}} + \psi +a\lambda + \Delta$, where $\psi$ is the total cotangent bundle, $\lambda$ is the Hodge line bundle, $a$ is a non-negative rational number and $\Delta$ is an effective boundary divisor with  coefficients at most one (see Definition \ref{D:adj}, and Remark \ref{R:poladj} for motivations).

As a consequence of Theorem \ref{T:main}, we are then able to  describe the decomposition in Shokurov polytopes of a region of the polytope of adjoint divisors (see Corollary \ref{C:main}). Recall that a Shokurov polytope collects adjoint  divisors with the same ample model; the existence of such a decomposition for $\MM_{g,n}$  is proven in \cite[Cor. 1.1.5]{BCHM10}. 
Determining the full decomposition of the space of adjoint divisors is one of the ultimate goals in the study of the birational geometry of $\MM_{g,n}$ and this is the first 
general result in this direction. Let us stress again that the ample models of the Shokurov polytopes described by our result all have a modular interpretation, so it is natural to ask the following question.

\begin{question}
	Does the Shokurov decomposition of the space of adjoint divisors  on $\MM_{g,n}$ admit a modular interpretation?
\end{question}

Let us also mention that in \cite{CTV1} we also constructed several other weakly modular compactifications $\MM_{g,n}^{T+}$ of $M_{g,n}$, which are endowed with a morphism 
$f_T^+:\MM_{g,n}^{T+}\to \MM_{g,n}^T$ that is a  flip (with respect to a suitable divisor) of $\Upsilon_T:\MM_{g,n}\to \MM_{g,n}^T$. However, the varieties $\MM_{g,n}^{T+}$  are only rational (and not regular) contractions of  $\MM_{g,n}$ and we will not study them in this paper.

\subsection{Description of the results}

In order to explain more in details the results of this paper, let us recall the definition of the  varieties $\MM_{g,n}^T$. 
Consider the set of indexes 
\begin{equation}\label{E:setT}
T_{g,n}:=\{\irr \} \cup \{ (\tau,I) : 0\leq \tau\leq g, I \subseteq [n]:=\{1,\ldots, n\}, (\tau,I)\neq (0,\emptyset), (g,[n]) \}/\sim.
\end{equation}
where $\sim$ is the equivalence relation such that $\irr$ is equivalent only to itself and $(\tau,I) \sim (\tau',I')$ if and only if $(\tau,I)= (\tau',I')$ or $(\tau',I')=(g-\tau,I^c )$, where $I^c=[n] \setminus I$. We will denote the class of $(\tau,I)$ in $T_{g,n}$ by $[\tau, I]$ and the class of $\irr$ in $T_{g,n}$ again by $\irr$. We also set $T_{g,n}^*:=T_{g,n}\setminus \{\irr\}$. 

For any $T \subset T_{g,n}$,  consider the  (smooth, irreducible and of finite type over the base field $k$) algebraic stack of $T$-semistable curves 
\begin{center}
$\M_{g,n}^T := \{$$n$-pointed curves of genus $g$ with ample log canonical class, having singularities that are nodes, cusps, or tacnodes of type contained in $T$, and not having elliptic tails$\}$, 
\end{center}
see Definitions \ref{D:type} and \ref{D:pseudostable} for details. Note that there are open inclusions  among the different stacks $\M_{g,n}^T$: if $T\subseteq S$ then $\M_{g,n}^T\subseteq \M_{g,n}^S$ and in Proposition \ref{P:equaT} we study when the converse is true.

As special case of the above stacks, for $T=\emptyset$  we obtain the stack of pseudostable $n$-pointed curves of genus $g$ 
\begin{center}
$\M_{g,n}^{\ps} :=\M_{g,n}^{\emptyset}=\{$$n$-pointed curves of genus $g$ with ample log canonical class, having singularities that are nodes and cusps, and not having  elliptic tails$\}$.
\end{center}

Excluding the trivial case $(g,n)=(1,1)$ (when $\M_{g,n}^{\ps}=\emptyset$) and the pathological case $(g,n)= (2,0)$ (see \cite[Rmk. 1.13]{CTV1} and also Remark \ref{R:Tclosed-g2}), $\M_{g,n}^{\ps}$ is  a proper Artin stack with finite inertia (and Deligne-Mumford if $\car(k)\neq 2,3$) with coarse moduli space $\phi^{\ps}:\M_{g,n}^{\ps} \to \MM_{g,n}^{\ps}$ which is a normal projective variety. 
Moreover, there is a regular birational morphism $\wh \Upsilon:\M_{g,n}\to \M_{g,n}^{\ps}$ which sends a stable curve into the pseudostable curve obtained by replacing each elliptic tail with a cusp. The induced morphism $\Upsilon:\MM_{g,n}\to \MM_{g,n}^{\ps}$ on coarse moduli spaces is the contraction of the $K$-negative extremal ray of the Mori cone of $\MM_{g,n}$ generated by the elliptic tail curve $C_{\rm ell}\subset \MM_{g,n}$, which  parametrises stable curves obtained by attaching a fixed smooth curve in $M_{g-1,n+1}$ with a moving elliptic tail. For a proof of these results, see  \cite{Sch} and  \cite{HH1} for $n=0$, and \cite[Prop. 1.11 and Sec. 3]{CTV1} for general $n$.

Returning to the stacks $\M_{g,n}^T$ with $T$ arbitrary, we proved in \cite{CTV1} (see also Theorem \ref{T:goodsp}) that, if $\car(k)$ is big enough with respect to $(g,n)$ and we exclude the pathological case $(g,n)=(2,0)$ (see Remarks \ref{R:Tclosed-g2}, \ref{R:Tstable-g2}  and \ref{R:mod-g2}),
 then the stack $\M_{g,n}^T$ admits a good moduli space  $\MM_{g,n}^T$ which is a normal and irreducible proper algebraic space.  Moreover, there is a birational contraction $\Upsilon_T:  \MM_{g,n} \to \MM_{g,n}^T$, which factorises as 
$$
\Upsilon_T=f_T \circ \Upsilon : \MM_{g,n} \xrightarrow{\Upsilon} \MM_{g,n}^{\ps} \xrightarrow{f_T} \MM_{g,n}^T.
$$ 
Furthermore, if ${\rm char}(k)=0$, then $f_T: \MM_{g,n}^{\ps} \to \MM_{g,n}^T$ is the contraction associated to a $K$-negative face $F_T$ of the Mori cone $\overline{NE}(\MM_{g,n}^{\ps})$ and $\MM_{g,n}^T$ is a projective variety. 
When $T=T_{g,n}$, $f_T$ is the second step of the Hassett-Keel program, and has been studied (together with its flip) in \cite{HH13}  for $n=0$ and  in the trilogy \cite{AFSV1, AFS2, AFS3} for $n>0$ (and $\car(k)=0$).

In Section \ref{Sec:Tss}, we study the geometric  properties of $\MM_{g,n}^T$ and of the birational contraction $f_T:\MM_{g,n}^{\ps}\to \MM_{g,n}^T$. 
More precisely: in Proposition \ref{P:descend} we determine which line bundles on $\M_{g,n}^T$ descend to $\bbQ$-line bundles on $\MM_{g,n}^T$; in Proposition \ref{P:QfatQG}, we investigate when $\MM_{g,n}^T$ is $\bbQ$-factorial or $\bbQ$-Gorenstein; in Proposition \ref{P:factor}, we show that the contraction $f_T$ can be factorised in a modular way into a composition of divisorial contractions followed by a small contraction.

Finally, Section \ref{Sec:ampl-mod} is devoted to the description of the contractions $\Upsilon_T:\MM_{g,n}\to \MM_{g,n}^T$ as ample models of adjoint $\bbQ$-divisors on $\MM_{g,n}$ (in characteristic zero). The main result of this paper is the following theorem. 

\begin{theorem}[Theorem \ref{T:all-one}] \label{T:main}
	Assume that ${\rm char}(k)=0$ and let $L$ be a $\bbQ$-divisor on $\MM_{g,n}$ of the form
	$$
	L=K+\psi+a\lambda+ \alpha_{\irr}\delta_{\irr} + \sum_{[i,I]\in T^*_{g,n}}\alpha_{i,I}\delta_{i,I}
	$$
	where $a \ge 0$, $0 \le \alpha_{\irr} \le 1$ and  $0 \le \alpha_{i,I} \le 1$. 
	
	\begin{enumerate}
		\item $L$ is ample if and only if it is F-ample. In this case, we have that  
		$$ \MM_{g,n}= \Proj \bigoplus_{m \ge 0} H^0(\MM_{g,n}, \lfloor mL \rfloor).$$
		\item  Assume that $(g,n)\neq (1,1), (2,0)$. Then $L$ is semiample with associated contraction equal to $\Upsilon:\MM_{g,n}\to \MM_{g,n}^{\ps}$
		if and only if it is F-nef and the only F-curve on which it is trivial is $C_{\rm ell}$. In this case, we have that 
		$$ \MM_{g,n}^{\ps}= \Proj \bigoplus_{m \ge 0} H^0(\MM_{g,n}, \lfloor mL \rfloor).$$
		\item Fix $T\subseteq T_{g,n}$ and assume that $(g,n)\neq (1,1), (2,0), (1,2)$ and that $\alpha_{\irr}\leq \frac{10-a}{12}$.  Then 
		$\Upsilon_*(L)$ is semiample with associated contraction equal to $f_T:\MM_{g,n}^{\ps}\to \MM_{g,n}^T$ if and only if  $\Upsilon^*(\Upsilon_*(L))$ is F-nef and the only F-curves on which it is trivial are the ones whose images in $\MM_{g,n}^{\ps}$ have numerical classes contained in $F_T$. Moreover, in this case the ample model of $L$ is equal to 
		$\Upsilon_T:\MM_{g,n}\to \MM_{g,n}^T$ if we assume furthermore that 
		$\displaystyle \alpha_{\irr}\leq \frac{9-a+\alpha_{1,\emptyset}}{12}$.
		
		In particular, we have that 
		$$ \MM_{g,n}^{T}= \Proj \bigoplus_{m \ge 0} H^0(\MM_{g,n}, \lfloor mL \rfloor)=\Proj \bigoplus_{m \ge 0} H^0(\MM_{g,n}^{\ps}, \lfloor m\Upsilon_*(L) \rfloor).$$
	\end{enumerate}
\end{theorem}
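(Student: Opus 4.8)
The plan is to reduce all three statements to intersection computations with F-curves, relying on three inputs. First, from \cite{CTV1}: the modular contractions $\Upsilon$, $f_T$ and $\Upsilon_T=f_T\circ\Upsilon$ are contractions of $K$-negative extremal rays, resp.\ faces, so their targets carry ample divisors whose pullbacks are semiample on $\MM_{g,n}$, resp.\ $\MM_{g,n}^{\ps}$; together with this, the Mori cones of $\MM_{g,n}$, $\MM_{g,n}^{\ps}$, $\MM_{g,n}^T$ and the faces $F_T$ are described in terms of F-curves. Second, from \cite{BCHM10}: adjoint section rings on $\MM_{g,n}$ are finitely generated and the polytope of adjoint divisors admits a finite Shokurov decomposition, whose walls inside it are --- by the third input --- F-curve hyperplanes. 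Third: the F-conjecture holds for adjoint divisors on $\MM_{g,n}$ and on $\MM_{g,n}^{\ps}$, i.e.\ an F-nef (resp.\ F-ample) adjoint divisor is nef (resp.\ ample). Granting these, each part amounts to writing the divisor in question as an effective combination of the modular semiample divisors plus an effective $\Upsilon$-exceptional term, and reading off the associated contraction from the F-curves on which the combination is trivial.

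For part (1), ``ample $\Rightarrow$ F-ample'' is immediate. Conversely, by the third input an F-ample $L$ is nef; to upgrade to ample, observe that the F-ample locus inside the given polytope is open and nonempty --- at the corner $\Delta=\delta$ (all coefficients equal to $1$) one has $L=(13+a)\lambda+2\psi-\delta$, ample on $\MM_{g,n}$ by the criteria of \cite{GKM} (for small $(g,n)$ one checks ampleness directly) --- so, since the Shokurov walls inside the polytope are F-curve hyperplanes, the open ample chamber coincides with the open F-ample locus. The equality $\MM_{g,n}=\Proj\bigoplus_m H^0(\MM_{g,n},\lfloor mL\rfloor)$ is then the defining property of an ample divisor.

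For parts (2) and (3), recall that $\Upsilon\colon\MM_{g,n}\to\MM_{g,n}^{\ps}$ is a divisorial contraction with exceptional divisor $\delta_{1,\emptyset}$, contracting the $K$-negative ray $[C_{\rm ell}]$. Writing $L=\Upsilon^*(\Upsilon_*L)+c\,\delta_{1,\emptyset}$ and intersecting with $C_{\rm ell}$ --- using $K=13\lambda+\psi-2\delta$ and $\lambda\cdot C_{\rm ell}=1$, $\delta_{\irr}\cdot C_{\rm ell}=12$, $\delta_{1,\emptyset}\cdot C_{\rm ell}=-1$, $\psi\cdot C_{\rm ell}=0$ --- one finds $L\cdot C_{\rm ell}=12\,\alpha_{\irr}+a-\alpha_{1,\emptyset}-9$, so that
$$
c=-\,L\cdot C_{\rm ell}=9+\alpha_{1,\emptyset}-a-12\,\alpha_{\irr}.
$$
Hence $c\ge0\iff\alpha_{\irr}\le\tfrac{9-a+\alpha_{1,\emptyset}}{12}$ and $c=0\iff L\cdot C_{\rm ell}=0$. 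In part (2), $L$ is semiample with associated contraction $\Upsilon$ precisely when $c=0$ (so $L=\Upsilon^*(\Upsilon_*L)$) and $\Upsilon_*L$ is ample on $\MM_{g,n}^{\ps}$; as $\Upsilon_*L=K_{\MM_{g,n}^{\ps}}+\psi+a\lambda+\Upsilon_*\Delta$ is again adjoint (the $\delta_{1,\emptyset}$-term drops out) and $\Upsilon_*L\cdot\Upsilon_*F=L\cdot F$ when $c=0$, the $\MM_{g,n}^{\ps}$-analogue of (1) turns this into: $L\cdot C_{\rm ell}=0$ and $L\cdot F>0$ for every other F-curve $F$, i.e.\ $L$ is F-nef and trivial only on $C_{\rm ell}$. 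In part (3), $f_T\colon\MM_{g,n}^{\ps}\to\MM_{g,n}^T$ is the contraction of the $K$-negative face $F_T\subseteq\NEb(\MM_{g,n}^{\ps})$, so $\Upsilon_*L$ is semiample with contraction $f_T$ exactly when it is nef with $(\Upsilon_*L)^{\perp}\cap\NEb(\MM_{g,n}^{\ps})=F_T$; transporting this to $\MM_{g,n}$ via $\NEb(\MM_{g,n}^{\ps})=\Upsilon_*\NEb(\MM_{g,n})$ and the third input gives the stated condition on $\Upsilon^*(\Upsilon_*L)=L-c\,\delta_{1,\emptyset}$, the bound $\alpha_{\irr}\le\tfrac{10-a}{12}$ being precisely what makes $\Upsilon^*(\Upsilon_*L)$ F-nef (the last obstructing F-curve, one supported on $\delta_{\irr}$, becoming trivial at that value). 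Finally, under the sharper bound $\alpha_{\irr}\le\tfrac{9-a+\alpha_{1,\emptyset}}{12}$ we have $c\ge0$, so $L-\Upsilon^*(\Upsilon_*L)=c\,\delta_{1,\emptyset}$ is effective and $\Upsilon$-exceptional; hence $H^0(\MM_{g,n},\lfloor mL\rfloor)=H^0(\MM_{g,n},\lfloor m\,\Upsilon^*(\Upsilon_*L)\rfloor)=H^0(\MM_{g,n}^{\ps},\lfloor m\,\Upsilon_*L\rfloor)$ for all $m$, giving the displayed $\Proj$ equalities and identifying the ample model of $L$ with $\Upsilon_T$.

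The main obstacle is twofold. First, one must actually certify the third input in the relevant range --- that $\Upsilon^*(\Upsilon_*L)$ (and $L$) is genuinely nef, not merely F-nef --- either through the reduction of \cite{GKM} to genus $0$ combined with the behaviour of adjoint divisors under restriction to boundary strata, or, more in the spirit of this paper, by exhibiting the divisor as a non-negative combination of the modular semiample divisors $\Upsilon_T^{*}(\text{ample})$, $\lambda$, $\psi_i$ (and $\delta_{1,\emptyset}$ with the sign of $c$ as above), which is where the explicit control of those pullbacks from \cite{CTV1} enters. Second, and more delicate, is the combinatorial matching: determining which F-curves --- among the $\MM_{0,4}$-type and the $\MM_{1,1}$-type ones --- have image in $F_T$ (a bookkeeping over the index set $T\subseteq T_{g,n}$) and checking that, for $L$ satisfying the stated F-curve conditions, every remaining F-curve has strictly positive intersection. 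This rests on a case analysis of the intersection numbers of a general adjoint divisor against the two families of F-curves in terms of $a$, $\alpha_{\irr}$ and the $\alpha_{i,I}$; it is exactly here that the thresholds $\tfrac{10-a}{12}$ and $\tfrac{9-a+\alpha_{1,\emptyset}}{12}$ are pinned down, and that one verifies that no further Shokurov wall separates the ``F-nef with kernel $F_T$'' region from the ample model $\MM_{g,n}^T$.
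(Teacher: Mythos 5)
Your overall skeleton --- the F-conjecture for adjoint divisors as an input, the push--pull identity $L=\Upsilon^*(\Upsilon_*L)+c\,\delta_{1,\emptyset}$ with $c=9+\alpha_{1,\emptyset}-a-12\alpha_{\irr}$, and the identification of $\Upsilon$ and $f_T$ as contractions of the $K$-negative ray, resp.\ face $F_T$ --- matches the paper (Remark \ref{R:comp-adj}, Lemma \ref{L:map-nef}), and your sketch for certifying the F-conjecture input (reduction to genus $0$ via \cite{GKM} together with \cite{KM13}) is exactly Proposition \ref{P:F-conjecture} and Lemma \ref{L:GKMmap}. But there is a genuine gap in parts (2) and (3): you reduce them to an ``$\MM_{g,n}^{\ps}$-analogue of (1)'', i.e.\ to the claim that an adjoint divisor on $\MM_{g,n}^{\ps}$ which is positive on the images of all F-curves outside $F_T$ has null locus in $\NEb(\MM_{g,n}^{\ps})$ exactly equal to $F_T$ (ample in the case $T=\emptyset$). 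None of your three inputs gives this. The F-conjecture statement ``F-nef $\Rightarrow$ nef'' only yields nefness; it says nothing about which curves the divisor kills, and a priori an F-nef adjoint divisor that is F-trivial only on $C_{\rm ell}$ (resp.\ only on the F-curves mapping into $F_T$) could still vanish on a curve class not proportional to any F-curve, in which case the cone-theorem argument producing the contraction breaks down. Your alternative formulations --- ``the Shokurov walls inside the polytope are F-curve hyperplanes'' and ``no further Shokurov wall separates the region from the ample model'' --- are restatements of the missing claim, not arguments for it (the wall description is the content of Corollary \ref{C:dec-poly}, which is deduced from the theorem, so invoking it here would be circular).

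The paper closes precisely this gap in Proposition \ref{P:1strata}, using an ingredient absent from your proposal: the nef divisors $K_{\M_{g,n}}+\psi+\frac{9}{11}(\delta-\psi)$ on $\MM_{g,n}$ and $K_{\M_{g,n}^{\ps}}+\psi+\frac{7}{10}(\delta-\psi)$ on $\MM_{g,n}^{\ps}$ from \cite{AFS3}, whose null loci are known exactly (the ray $\bbR_{\geq 0}[C_{\rm ell}]$, resp.\ the elliptic bridge curves). One shows that $tL-M$ is nef for $t\gg 0$, where $M$ is the relevant such divisor --- again via \cite[Cor.~4.3]{GKM} and Lemma \ref{L:GKMmap}; this is also where the hypothesis $\alpha_{\irr}\le\frac{10-a}{12}$ is actually used, namely to guarantee the numerical conditions \eqref{E:ipoGKM} for $\Upsilon^*(\Upsilon_*L)$, not the vanishing on some F-curve ``supported on $\delta_{\irr}$'' as you suggest --- so that the null locus of $L$ is contained in that of $M$ and hence consists only of classes of F-curves, at which point your F-curve bookkeeping (Remark \ref{R:Fcur-T}) applies and Lemma \ref{L:map-nef} gives semiampleness with the stated contraction. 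Without this comparison divisor, or the equivalent program of exhibiting $L$ as a positive combination of divisors with known null loci (which you mention as an option but do not carry out), the passage from ``F-trivial only on the prescribed F-curves'' to ``trivial only on $F_T$'' is unproven, and with it parts (2) and (3).
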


The proof of the above theorem is based on two key observations.  
The first one (Proposition \ref{P:F-conjecture}) is that an adjoint divisor $L$ is nef if and only if it is F-nef, i.e.\ adjoint divisors satisfy the F-conjecture, which is an interesting statement by itself.
The second result (Proposition \ref{P:1strata}) says that  if $L$ is an adjoint divisor on $\MM_{g,n}^{\ps}$ such that $\Upsilon^*(L)$ is F-nef and the only F-curves on which it is trivial are the ones whose images in $\MM_{g,n}^{\ps}$ have numerical classes contained in $F_T$, then $L$ is nef and trivial only on the curves whose numerical class is contained in $F_T$.  

The following Corollary of Theorem \ref{T:main} describes the decomposition in Shokurov polytopes of a region of adjoint divisors. As explained in Remark \ref{R:poladj}, our divisors are adjoint in the generalised sense of \cite{Birkar-Zhang}.

\begin{cor}[Corollary \ref{C:dec-poly}]\label{C:main}
	Assume that ${\rm char}(k) =0$ and that $(g,n)\neq (1,1), (2,0), (1,2)$. 
	Let $L$ be an adjoint $\bbQ$-divisor on $\MM_{g,n}$
	$$
	L=K+\psi+a\lambda+ \alpha_{\irr}\delta_{\irr} + \sum_{[i,I]\in T^*_{g,n}}\alpha_{i,I}\delta_{i,I},
	$$
	 $a \ge 0$, $0 \le \alpha_{\irr} \le 1$ and  $0 \le \alpha_{i,I} \le 1$ such that $\vert \alpha_{i,I} -\alpha_{j,J}\vert <\frac{1}{3}$ for any $[i,I], [j,J] \in T_{g,n}^*$ and such that if $\alpha_{\irr}=1$ then $\alpha_{i,I}>0$ for any $[i,I]\in T_{g,n}^*$. 
	Assume furthermore that  
	\begin{equation}\label{E:Iinq1}
	\frac{7-a}{10}\leq \alpha_{\irr} \quad (\text{for }g\geq 2), 
	\end{equation}
	\begin{equation}\label{E:Iinq2}
	\frac{7-a+\alpha_{i,I}+\alpha_{i+1,I}}{12}\leq \alpha_{\irr} \text{ for any }[i,I], [i+1,I]\in T^*_{g,n}\setminus \{[1,\emptyset]\}.
	\end{equation}
	Then the ample model of $L$ is 
	\begin{itemize}
		\item $\id:\MM_{g,n}\to \MM_{g,n}$ if $\frac{9-a+\alpha_{1,\emptyset}}{12}<\alpha_{\irr}$;
		\item $\Upsilon: \MM_{g,n}\to \MM_{g,n}^{\ps}$ if $\frac{9-a}{12}<\alpha_{\irr}\leq \frac{9-a+\alpha_{1,\emptyset}}{12}$;
		\item $\Upsilon_T:\MM_{g,n}\to \MM_{g,n}^T$ if $\alpha_{\irr}\leq \frac{9-a}{12} $  where $T$ is admissible (see Definition \ref{def:admissible}) and it is uniquely determined by  
		$$
		\begin{aligned} 
		& (\text{for }g\geq 2) \quad \irr \in T \Leftrightarrow \text{ equality holds in }\eqref{E:Iinq1},\\
		& \{[i,I],[i+1,I]\}\subseteq T \Leftrightarrow \text{ equality holds in }\eqref{E:Iinq2}.
		\end{aligned}
		$$
	\end{itemize}
	
\end{cor}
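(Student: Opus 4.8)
The plan is to obtain the Corollary as a direct consequence of Theorem~\ref{T:main}, splitting according to the value of $\alpha_{\irr}$ into the three ranges in which parts (1), (2) and (3) of that theorem respectively apply. Since those three parts are phrased through the intersection numbers of $L$ (or of $\Upsilon^*\Upsilon_*(L)$) with the F-curves of $\MM_{g,n}$, the first step is to recall that each of these numbers is an explicit affine function of $a$, $\alpha_{\irr}$ and the $\alpha_{i,I}$ (computed from $K=13\lambda+\psi-2\delta$ and the standard restriction formulas for $\lambda$, $\psi$ and the boundary classes), and then to isolate the finitely many F-curves that can be binding inside the polytope carved out by our hypotheses. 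This is precisely what the proximity assumption $|\alpha_{i,I}-\alpha_{j,J}|<\tfrac{1}{3}$ buys: it is the bound forcing $L$ to be strictly positive on \emph{every} F-curve outside a short explicit list, consisting of the elliptic tail curve $C_{\rm ell}$ (with $L\cdot C_{\rm ell}$ a positive multiple of $a+12\alpha_{\irr}-\alpha_{1,\emptyset}-9$), an F-curve $C_{\irr}$ governing the type $\irr$ (with $L\cdot C_{\irr}$ a positive multiple of $a+10\alpha_{\irr}-7$), and an elliptic bridge F-curve $C_{[i,I],[i+1,I]}$ for each consecutive pair $\{[i,I],[i+1,I]\}$ in $T^*_{g,n}\setminus\{[1,\emptyset]\}$ (with $L\cdot C_{[i,I],[i+1,I]}$ a positive multiple of $a+12\alpha_{\irr}-\alpha_{i,I}-\alpha_{i+1,I}-7$). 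With these formulas, \eqref{E:Iinq1} and \eqref{E:Iinq2} say exactly that $L$ is non-negative on $C_{\irr}$, resp.\ on the $C_{[i,I],[i+1,I]}$, with equality if and only if the intersection vanishes.

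Granting this, the case $\frac{9-a+\alpha_{1,\emptyset}}{12}<\alpha_{\irr}$ is handled by noting that $L\cdot C_{\rm ell}>0$ and that, since $\frac{9-a+\alpha_{1,\emptyset}}{12}\ge\frac{7-a}{10}$ and $\frac{9-a+\alpha_{1,\emptyset}}{12}\ge\frac{7-a+\alpha_{i,I}+\alpha_{i+1,I}}{12}$ (the latter because $\alpha_{i,I}+\alpha_{i+1,I}\le 2\le 2+\alpha_{1,\emptyset}$), both \eqref{E:Iinq1} and \eqref{E:Iinq2} become strict; hence $L$ is F-ample and Theorem~\ref{T:main}(1) gives the first bullet. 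In the case $\frac{9-a}{12}<\alpha_{\irr}\le\frac{9-a+\alpha_{1,\emptyset}}{12}$ I would set $c:=9-a-12\alpha_{\irr}+\alpha_{1,\emptyset}\in[0,\alpha_{1,\emptyset})$ and pass to $L':=\Upsilon^*\Upsilon_*(L)=L-c\,\delta_{1,\emptyset}$, which is again of the shape required in Theorem~\ref{T:main}, now with the coefficient of $\delta_{1,\emptyset}$ equal to $12\alpha_{\irr}+a-9\in(0,1]$; since $L-L'$ is effective and $\Upsilon$-exceptional one has $H^0(\MM_{g,n},\lfloor mL\rfloor)=H^0(\MM_{g,n},\lfloor mL'\rfloor)$ for $m$ sufficiently divisible. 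Because $\delta_{1,\emptyset}\cdot C\le 0$ on every F-curve $C$ and $\delta_{1,\emptyset}\cdot C_{\rm ell}<0$, one gets $L'\cdot C\ge L\cdot C$ with equality off the elliptic tail locus and $L'\cdot C_{\rm ell}=0$; using \eqref{E:Iinq1}--\eqref{E:Iinq2} (strict here, since now $\alpha_{\irr}>\frac{9-a}{12}\ge\frac{7-a}{10}$ and $\alpha_{i,I}+\alpha_{i+1,I}\le 2$) together with the proximity reduction applied to $L'$, one concludes that $L'$ is F-nef and trivial exactly on $C_{\rm ell}$. Theorem~\ref{T:main}(2), applied to $L'$, then gives that $\Upsilon_*(L)$ is ample on $\MM_{g,n}^{\ps}$, so the ample model of $L$ is $\Upsilon$, which is the second bullet.

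For the third bullet, assuming $\alpha_{\irr}\le\frac{9-a}{12}$ and keeping $L'=L-c\,\delta_{1,\emptyset}$ with $c\ge\alpha_{1,\emptyset}\ge 0$, I would define $T\subseteq T_{g,n}$ by declaring $\irr\in T$ iff equality holds in \eqref{E:Iinq1}, declaring $\{[i,I],[i+1,I]\}\subseteq T$ iff equality holds in \eqref{E:Iinq2}, and letting the types involving $[1,\emptyset]$ belong to $T$ precisely when $\alpha_{\irr}=\frac{9-a}{12}$. One first checks that this $T$ is admissible in the sense of Definition~\ref{def:admissible}, a combinatorial verification resting on the compatibility of the affine equations cutting out the equality loci of \eqref{E:Iinq1}--\eqref{E:Iinq2}. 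Next, $\alpha_{\irr}\le\frac{9-a}{12}\le\frac{9-a+\alpha_{1,\emptyset}}{12}\le\frac{10-a}{12}$, so the numerical hypotheses of Theorem~\ref{T:main}(3) are met; and, since $\delta_{1,\emptyset}$ meets $C_{\irr}$ and the $C_{[i,I],[i+1,I]}$ trivially (so $L'$ and $L$ have the same intersection with them), the intersection computations of the first paragraph show that $L'=\Upsilon^*\Upsilon_*(L)$ is F-nef and that the F-curves on which $L'$ is trivial are precisely $C_{\rm ell}$, together with $C_{\irr}$ when $\irr\in T$ and the curves $C_{[i,I],[i+1,I]}$ with $\{[i,I],[i+1,I]\}\subseteq T$ (and, when $\alpha_{\irr}=\frac{9-a}{12}$, the F-curve attached to $[1,\emptyset]$). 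By the modular description of the contraction $f_T$ and of the face $F_T\subseteq\NEb(\MM_{g,n}^{\ps})$ given in \cite{CTV1}, this is exactly the set of F-curves of $\MM_{g,n}$ whose images in $\MM_{g,n}^{\ps}$ lie in $F_T$; Theorem~\ref{T:main}(3) therefore identifies the ample model of $L$ with $\Upsilon_T$, which is the third bullet.

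The step I expect to be the main obstacle is the F-curve analysis underlying the first paragraph: one must (i) write down the explicit intersection numbers of adjoint divisors with all F-curves and, most delicately, (ii) verify that the bound $\tfrac{1}{3}$ is sharp enough that $L$ and $L'$ stay uniformly bounded away from $0$ on \emph{every} F-curve outside the finite list, over the whole polytope; and then (iii) match the resulting vanishing locus with the set of generators of the face $F_T$, which forces one to unwind the modular structure of $f_T$ from \cite{CTV1}. By comparison, the admissibility of $T$ and the behaviour of the types involving $[1,\emptyset]$ at the boundary value $\alpha_{\irr}=\frac{9-a}{12}$ should be comparatively routine combinatorial points.
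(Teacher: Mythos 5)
Your overall architecture—splitting on $\alpha_{\irr}$, passing to $\Upsilon_*(L)$, and invoking Theorem \ref{T:main} once the relevant F-curve positivity/vanishing is known—is the same as the paper's, but the actual content of the proof is precisely the F-curve verification that you defer as ``the main obstacle'', and the shortcuts you propose for it contain false statements. By Lemma \ref{L:forGKM} one has $\delta_{1,\emptyset}\cdot F_s([1,\emptyset])=1$ and $\delta_{1,\emptyset}\cdot F([i,I],[g-1-i,I^c])=1$, so your claims that ``$\delta_{1,\emptyset}\cdot C\le 0$ on every F-curve'' and that $\delta_{1,\emptyset}$ meets $C_{\irr}$ and the bridge F-curves trivially are both wrong. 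Consequently the quantities $a+10\alpha_{\irr}-7$ and $a+12\alpha_{\irr}-\alpha_{i,I}-\alpha_{i+1,I}-7$ are the intersections of $L'=\Upsilon^*(\Upsilon_*(L))$ with those F-curves (equivalently of $\Upsilon_*(L)$ with the elliptic bridge curves, Lemma \ref{L:int-C}), not of $L$, whose intersections are $2-2\alpha_{\irr}+\alpha_{1,\emptyset}$ and $2+\alpha_{1,\emptyset}-\alpha_{i,I}-\alpha_{i+1,I}$; and on exactly these delicate curves $L'\cdot C=L\cdot C-c<L\cdot C$, so your monotonicity argument ``$L'\cdot C\ge L\cdot C$ off the elliptic tail locus'' fails where it is needed, which is why \eqref{E:Iinq1}--\eqref{E:Iinq2} must be read, as in the paper, as conditions on $\Upsilon_*(L)$. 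Your parenthetical that at $\alpha_{\irr}=\tfrac{9-a}{12}$ an extra F-curve ``attached to $[1,\emptyset]$'' becomes trivial is also false (there $L'\cdot F([1,\emptyset])=11-a-12\alpha_{\irr}=2$), and if it were true it would wreck your own application of Theorem \ref{T:main}(3), since the trivial F-curves must be exactly those listed in Remark \ref{R:Fcur-T}.

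The genuine gap is that you never verify that, outside your short list, $L$ (resp.\ $L'$) is strictly positive on every F-curve, and the $\tfrac13$-proximity alone does not give this: the F-curves with one or more entries equal to $[1,\emptyset]$ produce conditions such as $L'\cdot F([1,\emptyset])=11-a-12\alpha_{\irr}>0$, the type-(5) curve with two $[1,\emptyset]$ entries giving $\alpha_{\irr}<\tfrac{10-a+\alpha_{2,\emptyset}/2}{12}$, and the type-(6) conditions \eqref{L:F-nef3b}--\eqref{L:F-nef3d} of Lemma \ref{L:F-nef}, all of which need the case-wise upper bounds $\alpha_{\irr}\le\tfrac{9-a+\alpha_{1,\emptyset}}{12}$ resp.\ $\le\tfrac{9-a}{12}$ in combination with the proximity bound (see Remark \ref{R:simp-ineq} and, e.g., the check of \eqref{L:F-nef3c} in the paper's proof, which uses $\alpha_{2,\emptyset}>\alpha_{1,\emptyset}-\tfrac13$ and $\alpha_{1,\emptyset}\le 1$). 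That case analysis is exactly the content of Lemmas \ref{L:Fnef} and \ref{L:F-nef} together with the three-case verification in the paper's proof of Corollary \ref{C:dec-poly}; until you carry it out, your text is a description of the strategy rather than a proof. (The admissibility of the statement's $T$ is indeed routine, but note that your modified $T$ containing pairs through $[1,\emptyset]$ is not admissible in the sense of Definition \ref{def:admissible}, even though it has the same $T^{\adm}$.)
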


In particular, this corollary describes the Shokurov decomposition for the region of adjoint divisors 
$$
L=K+\psi+a\lambda+ \alpha_{\irr}\delta_{\irr} + \sum_{[i,I]\in T^*_{g,n}}\alpha_{i,I}\delta_{i,I}
$$
such that $\frac{9-a}{12} \le \alpha_{\irr} \le 1$ and $\frac{2}{3} < \alpha_{i,I} \le 1$ for any $[i,I]\in T_{g,n}^*$.

\subsection*{Acknowledgment}
We had the pleasure and the benefit of conversations with  J.\ Alper, E.\ Arbarello, G.\ Farkas, M.\ Fedorchuk, R.\ Fringuelli,  A.\ Lopez,  Zs.\ Patakfalvi and R.\ Svaldi about the topics of this paper.

The first author is supported by prize funds related to the PRIN-project 2015 EYPTSB ``Geometry of Algebraic Varieties" and by University Roma Tre. The second author was supported during part of this project by the DFG grant ``Birational Methods in Topology and Hyperk\"ahler Geometry". The third author is  a member of the CMUC (University of Coimbra), where part of this work was carried over. 

The authors are members of the GNSAGA group of INdAM.

\section{Preliminaries: the Picard groups and some special curves  of $\MM_{g,n}$ and of $\MM_{g,n}^{\ps}$}\label{S:prel}

The aim of this section is to recall the description of the rational Picard groups of $\M_{g,n}$ and of $\M_{g,n}^{\ps}$, and of their coarse moduli spaces $\MM_{g,n}$ and $\MM_{g,n}^{\ps}$, and to introduce some special curves in 
$\MM_{g,n}$ and $\MM_{g,n}^{\ps}$ that will play a key role in the sequel. 




Let us start by defining the tautological  line bundles on $\M_{g,n}$. 
To any element of the set $T_{g,n}$ defined in \eqref{E:setT}, we can associate a line bundle on $\M_{g,n}$
in the following way:
\begin{itemize}
\item To $\irr\in T_{g,n}$ we associate the boundary line  bundle $\delta_{\irr}:=\cO_{\M_{g,n}}(\Delta_{\irr})$, where $\Delta_{\irr}$ is the irreducible boundary divisor  of $\M_{g,n}$ whose generic point is a stable curve with one non separating node.
\item To $[i,I]\in T_{g,n}^*$, which is different from any subset of the form $[0,\{k\}]\in T_{g,n}$ for $k\in [n]$, we associate the boundary line  bundle $\delta_{i,I}:=\cO_{\M_{g,n}}(\Delta_{i,I})$, where $\Delta_{i,I}$ is the irreducible boundary divisor  of $\M_{g,n}$ whose generic point is a stable curve formed by  two smooth irreducible curves $C_1\in \M_{i,I\cup\{\star\}}$ and $C_2\in \M_{g-i,I^c\cup\{\bullet\}}$ glued nodally by identifying $\star$ and $\bullet$. 

\item To $[0,\{k\}]\in T_{g,n}^*$, we associate the $k$-th cotangent line bundle $\psi_k:=\sigma_k^*(\omega_{\CC_{g,n}/\M_{g,n}})$, where $\omega_{\CC_{g,n}/\M_{g,n}}$ is the relative dualising sheaf of the universal family $\pi:\CC_{g,n}\to \M_{g,n}$ and $\sigma_k$ is its $k$-th section. 

\end{itemize}
Following \cite{GKM}, we will set $\delta_{0,\{i\}}=-\psi_i$ so that the  line bundles $\delta_{i,I}$ are defined for every $[i,I]\in T_{g,n}^*$. As customary, we will denote the total cotangent and total boundary line bundles by (using additive notation)
\begin{equation}\label{E:deltapsi}
\begin{aligned}
& \psi:=\sum_{i=1}^n \psi_i, \\
& \delta:=\delta_{\irr}+\sum\delta_{i,I}, \\
\end{aligned}
\end{equation}
where the last sum ranges over all elements $[i,I]\in T_{g,n}^*$ such that 
$[i,I]\neq [0,\{k\}]$ for some $1\leq k \leq n$. Recall also that on $\M_{g,n}$ we can define the Hodge line bundle $\lambda:=\det \pi_*(\omega_{\CC_{g,n}/\M_{g,n}})$.

\begin{fact}\label{F:PicM}
The rational Picard group $ \Pic(\M_{g,n})_{\bbQ}$ is generated by the tautological line bundles $\lambda$, $\delta_{\irr}$ and $\{\delta_{i,I}\}_{[i,I]\in T_{g,n}^*}$. 
\end{fact}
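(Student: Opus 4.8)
The statement to prove, Fact \ref{F:PicM}, is that the rational Picard group of $\M_{g,n}$ is generated by $\lambda$, $\delta_{\irr}$, and the $\delta_{i,I}$. This is a classical result (due to Arbarello--Cornalba, Harer, and others), so I will describe how I would reconstruct its proof.

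\medskip

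The plan is to reduce the computation for the stack $\M_{g,n}$ to that of the open substack $M_{g,n}$ of smooth curves via the excision exact sequence for Picard/Chow groups, and then to invoke the vanishing of $\Pic(M_{g,n})_\bbQ$. Concretely, first I would use the localization sequence for divisor class groups of the smooth Artin stack $\M_{g,n}$: since the boundary $\partial \M_{g,n} = \M_{g,n} \setminus M_{g,n}$ is a normal crossings divisor whose irreducible components are exactly $\Delta_{\irr}$ and the $\Delta_{i,I}$ for $[i,I] \in T_{g,n}^*$ with $[i,I] \neq [0,\{k\}]$, there is an exact sequence
\[
\bigoplus_{\text{components } \Delta_\bullet} \bbZ \cdot [\Delta_\bullet] \longrightarrow \Pic(\M_{g,n}) \longrightarrow \Pic(M_{g,n}) \longrightarrow 0.
\]
Tensoring with $\bbQ$, it suffices to know that $\Pic(M_{g,n})_\bbQ = 0$. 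This last fact is Harer's theorem: $H^2(M_{g,n};\bbQ) = 0$ for $g \geq 3$ (and the low-genus cases $g=0,1,2$ are handled separately by explicit arguments — for $g=0$, $M_{0,n}$ is an open subset of affine space; for $g=1,2$ one uses the hyperelliptic/elliptic structure), together with the fact that for a smooth Deligne--Mumford stack (or more generally a smooth quotient stack) with trivial generic automorphisms issues, $\Pic(M_{g,n})_\bbQ$ injects into $H^2$. Then the classes $[\Delta_{i,I}]$ together with $[\Delta_{\irr}]$ span $\Pic(\M_{g,n})_\bbQ$.

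\medskip

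The only subtlety is that the components of the form $[0,\{k\}]$ do \emph{not} correspond to boundary divisors (by the stability convention $\Delta_{0,\{k\}}$ would parametrize a rational tail with one marked point and one node, which is unstable), so the generating set in the excision sequence consists of $\delta_{\irr}$ and the $\delta_{i,I}$ with $[i,I]$ not of that special form; the remaining $\delta_{0,\{k\}} = -\psi_k$ are then seen to lie in the span of these by the standard relations among tautological classes (for instance, via the pullback formula under the forgetful morphism $\pi: \M_{g,n+1} \to \M_{g,n}$, or Mumford's relation expressing $\psi$ in terms of $\kappa$, $\lambda$, and boundary — though for the \emph{rational} Picard group one does not even need these: since $\Pic(\M_{g,n})_\bbQ$ is already spanned by $\delta_{\irr}$ and the divisorial $\delta_{i,I}$, and $\lambda$ and the $\psi_k$ are among its elements, they are automatically expressible, and one only needs to check that the full list $\{\lambda, \delta_{\irr}, \delta_{i,I}\}$ is also a spanning set, which is immediate since it contains a spanning subset). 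Alternatively, and more self-containedly, I would cite Arbarello--Cornalba \cite{AC}, who prove directly that these classes freely generate $\Pic(\M_{g,n})_\bbQ$ for $g \geq 3$, and indicate the reference for the low-genus modifications.

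\medskip

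I expect the main obstacle to be purely expository rather than mathematical: the result is standard, so the real work is bookkeeping the exact list of boundary divisors (handling the $[0,\{k\}]$ convention correctly) and pointing to the right references for the vanishing $\Pic(M_{g,n})_\bbQ = 0$ in all genera, including the exceptional small cases. Since this is stated as a \emph{Fact} in the paper, I would keep the justification brief: cite \cite{AC} (Arbarello--Cornalba) and \cite{Harer} for the vanishing of $H^2(M_{g,n};\bbQ)$, note the excision sequence, and remark that the $[0,\{k\}]$ components are non-divisorial so that the $-\psi_k$ are obtained a posteriori, consistently with the convention $\delta_{0,\{i\}} = -\psi_i$ introduced above.
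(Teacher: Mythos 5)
The paper gives no argument for this Fact at all: it is stated and immediately attributed to \cite[Thm.~2.2]{AC} in characteristic zero and to \cite{Mor} in positive characteristic. Your fallback of citing \cite{AC} is therefore exactly what the paper does, and would have been enough on its own.

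However, the proof sketch you build around the citation contains a genuine error. The pivotal claim that $\Pic(M_{g,n})_{\bbQ}=0$, justified by ``Harer's theorem: $H^2(M_{g,n};\bbQ)=0$ for $g\geq 3$,'' is false. Harer's computation gives $H^2(\M_{g,n};\bbQ)\cong \bbQ^{n+1}$ for $g\geq 3$, freely spanned by $\lambda$ and $\psi_1,\dots,\psi_n$; equivalently, $\Pic(\M_{g,n}|_{M_{g,n}})_{\bbQ}$ is freely generated by $\lambda,\psi_1,\dots,\psi_n$, not trivial. If your vanishing were correct, the excision sequence would force $\Pic(\M_{g,n})_{\bbQ}$ to be spanned by the boundary classes $\delta_{\irr},\delta_{i,I}$ alone, so $\lambda$ would be a rational combination of boundary divisors --- contradicting the freeness for $g\geq 3$ recorded in the paper immediately after the Fact (``there are no relations among the tautological line bundles''). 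Consequently the later step, ``$\lambda$ and the $\psi_k$ are among its elements, they are automatically expressible,'' collapses: they are precisely the generators that the boundary does not account for. The repair is to use Harer correctly: excision shows $\Pic(\M_{g,n})_{\bbQ}$ is generated by the boundary classes together with lifts of generators of the interior Picard group, and by Harer those lifts can be taken to be $\lambda$ and $\psi_i=-\delta_{0,\{i\}}$, which yields exactly the stated generating set (with the low-genus cases $g\leq 2$ treated separately, where extra relations such as $12\lambda=\delta_{\irr}$ on $\M_{1,1}$ make the interior group smaller). Finally, note that this topological route only covers ${\rm char}(k)=0$; the Fact as used in the paper is also needed in positive characteristic, for which one must invoke \cite{Mor}, as the paper does.
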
   
The above fact is proven in \cite[Thm. 2.2]{AC}  for ${\rm char}(k)=0$ and in \cite{Mor} for ${\rm char}(k)>0$. 
Moreover, if $g\geq 3$ there are no relations among the tautological line bundles while for $g=0,1,2$ the list of all relations can be found in loc. cit. Quite recently, Fringuelli and the third author \cite{FV} proved that, if $\car(k)\neq 2$, then the integral Picard group $\Pic(\M_{g,n})$ is generated by the tautological line bundles (generalising  the result of \cite{AC0} for $\car(k)=0$ and $g\geq 3$).

Since the rational Picard group of the coarse moduli space $\MM_{g,n}$ can be identified with  the rational Picard group  of the  stack $\M_{g,n}$ via the pull-back along the morphism $\phi: \M_{g,n}\to \MM_{g,n}$, and since $\MM_{g,n}$ has 
finite quotient singularities, and hence it is $\bbQ$-factorial, we deduce the following

\begin{cor}\label{F:PicMM}
The group $\Pic(\MM_{g,n})_{\bbQ}= \Cl(\MM_{g,n})_{\bbQ}$ is generated by $\lambda$, $\delta_{\irr}$ and $\{\delta_{i,I}\}_{[i,I]\in T_{g,n}^*}$.
\end{cor}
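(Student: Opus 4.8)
The goal is to deduce Corollary \ref{F:PicMM} from Fact \ref{F:PicM} together with two standard facts about coarse moduli spaces and finite quotient singularities. The plan is to use the following chain of reasoning.

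\medskip

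\textbf{Step 1: Identify the rational Picard group of the stack with that of its coarse space.} Let $\phi\colon \M_{g,n}\to \MM_{g,n}$ be the coarse moduli map. Since $\M_{g,n}$ is a proper smooth Deligne--Mumford (or, in positive characteristic, tame Artin) stack with projective coarse space, and since every object of $\M_{g,n}$ has a finite automorphism group, pull-back along $\phi$ induces an isomorphism $\phi^*\colon \Pic(\MM_{g,n})_{\bbQ}\xrightarrow{\ \sim\ }\Pic(\M_{g,n})_{\bbQ}$. Concretely: $\phi^*$ is injective because a line bundle trivial after pull-back to the stack is trivial on a dense open, hence (by normality and properness) on all of $\MM_{g,n}$; and it is surjective rationally because, given a line bundle $\cL$ on $\M_{g,n}$, some positive multiple $\cL^{\otimes m}$ has trivial stabiliser actions along all the gerbes (the automorphism groups being finite of bounded order $N$, one may take $m$ divisible by $N!$), hence descends to a genuine line bundle on $\MM_{g,n}$ whose pull-back is $\cL^{\otimes m}$.

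\medskip

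\textbf{Step 2: Identify $\Pic(\MM_{g,n})_{\bbQ}$ with $\Cl(\MM_{g,n})_{\bbQ}$.} The coarse space $\MM_{g,n}$ has at worst finite quotient singularities (étale-locally it is a quotient of a smooth scheme by a finite group), so it is normal and $\bbQ$-factorial. For a normal $\bbQ$-factorial variety the natural inclusion of Cartier divisor classes into Weil divisor classes becomes an isomorphism after tensoring with $\bbQ$: every Weil divisor is $\bbQ$-Cartier by definition of $\bbQ$-factoriality, and linear equivalence of Cartier divisors agrees with that of the underlying Weil divisors on a normal variety. This gives $\Pic(\MM_{g,n})_{\bbQ}=\Cl(\MM_{g,n})_{\bbQ}$.

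\medskip

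\textbf{Step 3: Transport the generators.} The tautological classes $\lambda$, $\delta_{\irr}$, $\{\delta_{i,I}\}_{[i,I]\in T_{g,n}^*}$ on the stack are, by construction, pull-backs of classes on the coarse space (they are defined via boundary divisors, which are $\phi$-invariant, and via the Hodge bundle, which descends rationally); so under the isomorphism of Step 1 they correspond to classes in $\Pic(\MM_{g,n})_{\bbQ}$ which, by a slight abuse, we denote by the same symbols. Since these classes generate $\Pic(\M_{g,n})_{\bbQ}$ by Fact \ref{F:PicM}, and $\phi^*$ is an isomorphism, the corresponding classes generate $\Pic(\MM_{g,n})_{\bbQ}=\Cl(\MM_{g,n})_{\bbQ}$, which is the assertion of the Corollary.

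\medskip

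The only subtlety — and the one step worth spelling out — is the surjectivity part of Step 1 in the stacky/non-schematic setting: one must check that a power of an arbitrary line bundle on $\M_{g,n}$ descends to the coarse space. This is where the hypothesis on the characteristic (ensuring tameness / bounded automorphism orders, cf.\ the discussion around Theorem \ref{T:goodsp}) enters, and it is standard (e.g.\ by Kresch's or Alper's descent criteria for line bundles along good/coarse moduli maps). Everything else is formal.
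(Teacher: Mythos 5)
Your argument is correct and is essentially the paper's own: the paper deduces the corollary from Fact \ref{F:PicM} by exactly the same two observations, namely that $\phi^*$ identifies $\Pic(\MM_{g,n})_{\bbQ}$ with $\Pic(\M_{g,n})_{\bbQ}$ and that $\MM_{g,n}$ has finite quotient singularities, hence is $\bbQ$-factorial, so $\Pic_{\bbQ}=\Cl_{\bbQ}$. Your Steps 1--3 simply spell out the standard details (descent of a power of a line bundle along the coarse moduli map, transport of the tautological generators) that the paper leaves implicit.
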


A special class of curves of $\MM_{g,n}$ that will play a crucial role in the sequel are the  \emph{F-curves}, which are the one-dimensional strata of the stratification of  $\MM_{g,n}$ by dual graphs.
Up to numerical equivalence, the $F$-curves can be described in the following way  (see \cite[Thm. 2.2]{GKM}):

\begin{enumerate}[(1)]
\item For $g\geq 1$: $C_{\rm ell}$ (called the elliptic tail curve) is obtained by attaching a fixed curve of $\MM_{g-1,[n]\cup\{n+1\}}$ to a moving curve in $\MM_{1,\{n+1\}}$ (and stabilising if necessary).
\item For $g\geq 3$: $F(\irr)$ is obtained by attaching a fixed curve of $\MM_{g-3,[n]\cup\{n+1,n+2,n+3,n+4\}}$ to a moving curve in $\MM_{0,\{n+1,n+2,n+3,n+4\}}$.
\item For $0\leq i \leq g-2$ and $I\subseteq [n]$ such that $(i,I)\neq (0,\emptyset)$:    $F([i,I])$ is obtained by attaching a fixed curve of $\MM_{i,I\cup \{n+1\}}$ and a fixed curve of $\MM_{g-2-i,I^c\cup \{n+2,n+3,n+4\}}$ to a moving curve in $\MM_{0,\{n+1,n+2,n+3,n+4\}}$ (and stabilising if necessary). 
\item For $1\leq i \leq g-1$: $F_s([i,I])$ is obtained by attaching a fixed curve of $\MM_{i-1,I\cup \{n+1,n+2\}}$ and a fixed curve of $\MM_{g-1-i,I^c\cup \{n+3,n+4\}}$ to a moving curve in $\MM_{0,\{n+1,n+2,n+3,n+4\}}$ (and stabilising if necessary). 
\item For $0\leq i, j$  with $i+j\leq g-1$ and  disjoint subsets $I, J\subseteq [n]$ such that $(i,I), (j,J)\neq (0,\emptyset)$: 
$F([i,I], [j,J])$ is obtained by attaching a fixed curve of $\MM_{i,I\cup \{n+1\}}$, a fixed curve of  $\MM_{j,J\cup \{n+2\}}$ and a fixed curve of $\MM_{g-1-i-j,(I\cup J)^c\cup \{n+3,n+4\}}$ to a moving curve in $\MM_{0,\{n+1,n+2,n+3,n+4\}}$ (and stabilising if necessary).
\item For $0\leq i, j, k$  with $i+j+k \leq g$ and  pairwise disjoint subsets $I, J, K\subseteq [n]$ such that  $(i,I), (j,J), (k,K), (g-i-j-k,(I\cup J\cup K)^c) \neq (0,\emptyset)$: $F([i,I], [j,J], [k,K])$ is obtained by attaching a fixed curve of $\MM_{i,I\cup \{n+1\}}$, a fixed curve of  $\MM_{j,J\cup\{n+2\}}$, a fixed curve  of  $\MM_{k,K\cup \{n+3\}}$ and a fixed curve of $\MM_{g-i-j-k,(I\cup J\cup K)^c\cup \{n+1\}}$ to a moving curve in $\MM_{0,\{n+1,n+2,n+3,n+4\}}$ (and stabilising if necessary).
\end{enumerate}

The intersections of the $\bbQ$-line bundles of $\MM_{g,n}$ with the F-curves are determined by the following formulae.

\begin{lemma}(\cite[Thm. 2.1]{GKM})\label{L:forGKM}
Given a $\bbQ$-line bundle $L=a\lambda- b_{\irr}\delta_{\irr}-\sum_{[i,I]\in T^*_{g,n}} b_{i,I}\delta_{i,I}$ on $\MM_{g,n}$, the intersection of $L$ with the  F-curves is given by
\begin{enumerate}[(1)]
\item $L.C_{\rm ell}=a-12b_{\irr}+b_{1,\emptyset}$,
\item $L.F(\irr)=b_{\irr}$, 
\item $L.F([i,I])=b_{i,I}$, 
\item  $L.F_s([i,I])=2b_{\irr}-b_{i,I}$, 
\item $L.F([i,I], [j,J])=b_{i,I}+b_{j,J}-b_{i+j,I\cup J}$, 
\item  $L.F([i,I], [j,J], [k,K])=b_{i,I}+b_{j,J}+b_{k,K}-b_{i+j,I\cup J}-b_{i+k,I\cup K}-b_{j+k,J\cup K}+b_{i+j+k,I\cup J\cup K}$.  
\end{enumerate} 
\end{lemma}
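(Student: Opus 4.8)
\textbf{Proof plan for Lemma \ref{L:forGKM}.}

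The plan is to recover these intersection numbers directly from the modular description of the F-curves and the standard formulae for the degrees of the tautological classes along a $1$-parameter family. In each of the six cases the F-curve is obtained by attaching fixed pointed curves to a moving genus $0$ curve $B$ which is isomorphic to $\MM_{0,4}\cong \bbP^1$, glued along four of its marked points; the moving factor is the pencil of $4$-pointed rational curves. The key point is therefore to compute, for the universal family over $B\cong \MM_{0,4}$, the degrees of $\lambda$, of $\delta_{\irr}$ and of each boundary divisor $\delta_{j,J}$, and then to keep track of how the gluing maps $\MM_{0,4}\to \MM_{g,n}$ pull these classes back.

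First I would record the base case. On $\MM_{0,4}\cong\bbP^1$ one has $\lambda=0$ (the family has rational fibres), $\psi_i$ has degree $1$ for each of the four markings, and the three boundary points $\delta_{0,\{a,b\}}$ each have degree $1$, with $\psi_1+\psi_2+\psi_3+\psi_4 = \sum \delta_{0,\{a,b\}}$ (equivalently $\kappa_1 = \psi - \delta = $ a class of degree $1$); also the self-intersection of the section through a marked point is $-1$, which produces the $-1$ contribution seen in item (1). Next, for a given F-curve I would push these computations forward along the clutching morphism. Attaching a fixed pointed curve $C_i$ along a marked point of $B$ turns that marked point into a node, so the node contributes to a boundary divisor of $\MM_{g,n}$ according to the genus/marking partition it produces; the class $\lambda$ is additive under clutching and the fixed factors are constant in the family, so $\lambda$ restricts to $0$ on every F-curve except that $C_{\rm ell}$ contains a moving elliptic tail, whose $j$-invariant varies, giving $\lambda\cdot C_{\rm ell}=1$; dually, $\psi$ on the gluing points contributes the $\psi_1+\cdots+\psi_4$ term. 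Carrying this bookkeeping through each of the six types, and using $\delta_{0,\{k\}}=-\psi_k$, yields exactly formulae (1)--(6): the sign pattern ($+b$ when a boundary divisor is hit transversally by the moving family, $-b$ when the F-curve lies \emph{inside} that boundary divisor so its normal direction is the tangent to the pencil) is dictated by whether the corresponding node is the moving node of $B$ or a fixed node.

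The only genuinely delicate step is the self-intersection computation underlying the $-12 b_{\irr}$ and $+b_{1,\emptyset}$ terms in (1) and the $2b_{\irr}$ in (4): for $C_{\rm ell}$ one is attaching a moving $\MM_{1,1}$-family to a fixed curve, and $\lambda$, $\delta_{\irr}$ and $\psi_{n+1}$ on the total space of the universal elliptic curve over $\MM_{1,1}\cong\bbP(4,6)$ must be evaluated, using $12\lambda = \delta_{\irr}$ on $\MMM_{1,1}$ together with the $-1$ self-intersection of the attaching node — this is where the coefficient $12$ enters, and the $+b_{1,\emptyset}$ comes from the fact that after stabilising, the elliptic tail curve meets $\Delta_{1,\emptyset}$ once. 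Similarly, for $F_s([i,I])$ the moving node is a non-separating node of the generic fibre, so it contributes twice to $\delta_{\irr}$ (the two branches, via the normalisation) but also lies in $\Delta_{i,I}$, producing $2b_{\irr}-b_{i,I}$. I would cite \cite[Thm. 2.1]{GKM} for the statement and organise the verification as the case analysis above; the combinatorial clutching bookkeeping is routine once the $\MM_{0,4}$ and $\MM_{1,1}$ base cases are in hand, so the main obstacle is simply being careful with signs and with the stabilisation steps when a genus/marking condition forces a bubble to be contracted.
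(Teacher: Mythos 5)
The paper does not actually prove this lemma --- it is quoted directly from \cite[Thm. 2.1]{GKM} --- so the only question is whether your direct verification is sound. Your overall method (evaluate $\lambda$, $\delta_{\irr}$ and the $\delta_{i,I}$ on the universal family over the moving $\MM_{0,4}$, resp.\ $\MM_{1,1}$, factor and push forward along the clutching maps, using $\lambda=0$ on rational families and the elliptic-pencil degrees for $C_{\rm ell}$) is exactly the standard argument behind these formulas, so the route is the right one.

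As written, however, the bookkeeping would not reproduce (1)--(6), because your sign rule is inverted relative to the convention $L=a\lambda-b_{\irr}\delta_{\irr}-\sum b_{i,I}\delta_{i,I}$. The correct accounting is: if the F-curve lies \emph{inside} $\Delta_x$ (a persistent node), then $\delta_x$ restricts to $-\psi$ at the attaching point on the moving factor, so $\delta_x\cdot F=-1$ and the contribution to $L\cdot F$ is $+b_x$; if the F-curve crosses $\Delta_x$ transversally at a degenerate fibre of the pencil, then $\delta_x\cdot F=+1$ and the contribution is $-b_x$. Your rule states the opposite, and the two cases you spell out have the geometry reversed as well: $C_{\rm ell}$ does not ``meet $\Delta_{1,\emptyset}$ once'' --- it is \emph{contained} in $\Delta_{1,\emptyset}$, with $\delta_{1,\emptyset}\cdot C_{\rm ell}=-1$ (the self-intersection of the section of the elliptic pencil), which is what produces the $+b_{1,\emptyset}$ in (1); and $F_s([i,I])$ is \emph{not} contained in $\Delta_{i,I}$: its four persistent nodes are all non-separating (contributing $-4$ to $\delta_{\irr}$), it crosses $\Delta_{\irr}$ at two of the three degenerate fibres ($+2$) and $\Delta_{i,I}$ at the remaining one ($+1$), giving $\delta_{\irr}\cdot F_s=-2$, $\delta_{i,I}\cdot F_s=+1$ and hence $2b_{\irr}-b_{i,I}$. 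Applied literally to $F([i,I])$ (which \emph{is} contained in $\Delta_{i,I}$) or to type (5), your stated rule would yield $-b_{i,I}$ instead of $+b_{i,I}$, so as it stands the argument only lands on the right answers in the cases where two reversals cancel. Finally, the relation you assert on $\MM_{0,4}$, namely $\psi_1+\psi_2+\psi_3+\psi_4=\sum\delta_{0,\{a,b\}}$, is false (degree $4$ versus $3$) and contradicts your own $\kappa_1=\psi-\delta$; the inputs you actually need are just $\lambda=0$, $\deg\psi_i=1$ and degree $1$ for each boundary point of $\MM_{0,4}$, together with $\lambda=1$, $\delta_{\irr}=12$, $\delta_{1,\emptyset}=-1$ on the elliptic pencil. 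With these corrections the computation does go through and recovers (1)--(6).
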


We will now   recall the description of the rational Picard group of the stack $\M_{g,n}^{\ps}$ and of its coarse moduli space $\MM_{g,n}^{\ps}$.
Note that the tautological  line bundles $\{\lambda,\delta_{\irr}, \{\delta_{i,I}\}_{[i,I]\in T_{g,n}^*}\}$ can be first restricted to $\M_{g,n}\setminus \Delta_{1,\emptyset}$ and then uniquely extended to a line bundle on $\M_{g,n}^{\ps}$ using that 
$\M_{g,n}^{\ps}$ is smooth and that $\M_{g,n}\setminus \Delta_{1,\emptyset}$ is  an open subset of $\M_{g,n}^{\ps}$ whose complement has codimension greater or equal to two. 
We will denote the line bundles on $\M_{g,n}^{\ps}$ obtained in this way by $\{\lambda^{\ps},\delta_{\irr}^{\ps}, \{\delta_{i,I}^{\ps}\}_{[i,I]\in T_{g,n}^*}\}$, or simply, with a slight abuse of notation, as $\{\lambda,\delta_{\irr}, 
\{\delta_{i,I}\}_{[i,I]\in T_{g,n}^*}\}$. Note that $\delta_{1,\emptyset}^{\ps}=0$ by construction.

\begin{proposition}\label{P:PicMps}
Assume that $(g,n)\neq (2,0), (1,1)$. 
\begin{enumerate}[(i)]
\item \label{P:PicMps1} The group $\Pic(\M_{g,n}^{\ps})_{\bbQ}$ is generated by the tautological line bundles $\lambda$, $\delta_{\irr}$ and $\{\delta_{i,I}\}_{[i,I]\in T_{g,n}^*\setminus \{[1,\emptyset]\}}$.
\item \label{P:PicMps2} Assume that $\car(k)\neq 2,3$. The natural inclusion  $\Pic(\MM_{g,n}^{\ps})_{\bbQ}\hookrightarrow \Cl(\MM_{g,n}^{\ps})_{\bbQ}$ is an equality and the pull-back $(\phi^{\ps}):\Pic(\MM_{g,n}^{\ps})_{\bbQ}\to \Pic(\M_{g,n}^{\ps})_{\bbQ}$ is an isomorphism.
\item \label{P:PicMps3} The push-foward map $\Upsilon_*:\Cl(\MM_{g,n})_{\bbQ}\to \Cl(\MM_{g,n}^{\ps})_{\bbQ}$ is determined by
$$\begin{sis}
& \Upsilon_*(\lambda)=\lambda, \\
& \Upsilon_*(\delta_{\irr})=\delta_{\irr}, \\
& \Upsilon_*(\delta_{i,I})=
\begin{cases}
\delta_{i,I} \quad & \text{ for any } [i,I]\neq [1,\emptyset], \\
0 \quad & \text{ for  } [i,I]=[1,\emptyset],\\
\end{cases}
\end{sis}$$ 
while, if $\car(k)\neq 2,3$, the pull-back map $\Upsilon^*:\Pic(\MM_{g,n}^{\ps})_{\bbQ}\to \Pic(\MM_{g,n})_{\bbQ}$ is determined by 
$$\begin{sis}
& \Upsilon^*(\lambda)=\lambda+\delta_{1,\emptyset}, \\
& \Upsilon^*(\delta_{\irr})=\delta_{\irr}+12\delta_{1,\emptyset}, \\
& \Upsilon^*(\delta_{i,I})=\delta_{i,I} \quad \text{ for any } [i,I]\neq [1,\emptyset]. 
\end{sis}$$ 
\end{enumerate}
\end{proposition}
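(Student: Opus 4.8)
\textbf{Proof strategy for Proposition \ref{P:PicMps}.}

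The plan is to deduce everything from the known structure of $\Pic(\M_{g,n})_\bbQ$ (Fact \ref{F:PicM}), the codimension-$\ge 2$ complement property of $\M_{g,n}\setminus\Delta_{1,\emptyset}\subseteq \M_{g,n}^{\ps}$, and the fact that $\wh\Upsilon:\M_{g,n}\to\M_{g,n}^{\ps}$ contracts the boundary divisor $\Delta_{1,\emptyset}$ (whose generic point carries an elliptic tail) to a locus of codimension $\ge 2$ (the cusp locus). First I would prove \eqref{P:PicMps1}: restriction gives a surjection $\Pic(\M_{g,n})_\bbQ\twoheadrightarrow \Pic(\M_{g,n}\setminus\Delta_{1,\emptyset})_\bbQ$ with kernel generated by $\delta_{1,\emptyset}$, and since $\M_{g,n}^{\ps}$ is smooth with $\M_{g,n}\setminus\Delta_{1,\emptyset}$ an open substack whose complement has codimension $\ge 2$, the restriction $\Pic(\M_{g,n}^{\ps})_\bbQ\to\Pic(\M_{g,n}\setminus\Delta_{1,\emptyset})_\bbQ$ is an isomorphism. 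Composing, $\Pic(\M_{g,n}^{\ps})_\bbQ$ is the quotient of $\Pic(\M_{g,n})_\bbQ$ by $\delta_{1,\emptyset}$, hence generated by the images of $\lambda,\delta_{\irr}$ and $\delta_{i,I}$ for $[i,I]\neq[1,\emptyset]$ — which are exactly the tautological bundles $\lambda^{\ps},\delta_{\irr}^{\ps},\delta_{i,I}^{\ps}$ by their defining extension property. (One should note $\delta_{1,\emptyset}^{\ps}=0$, consistent with the quotient.)

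Next, for \eqref{P:PicMps2}: since $\M_{g,n}^{\ps}$ has finite inertia and (for $\car(k)\neq 2,3$) is Deligne–Mumford with $\MM_{g,n}^{\ps}$ as coarse space, pull-back along $\phi^{\ps}$ identifies $\Pic(\MM_{g,n}^{\ps})_\bbQ$ with the subgroup of $\phi^{\ps}$-invariant, i.e.\ ``stacky-trivial on all residual gerbes'', classes in $\Pic(\M_{g,n}^{\ps})_\bbQ$; rationally this is all of $\Pic(\M_{g,n}^{\ps})_\bbQ$ by the standard averaging argument (a line bundle becomes pulled back from the coarse space after tensoring by a power killing the automorphisms, hence rationally), so $(\phi^{\ps})^*$ is an isomorphism. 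For the equality $\Pic(\MM_{g,n}^{\ps})_\bbQ=\Cl(\MM_{g,n}^{\ps})_\bbQ$ I would invoke $\bbQ$-factoriality of $\MM_{g,n}^{\ps}$: it is the coarse space of a smooth DM stack (smoothness of $\M_{g,n}^{\ps}$ is recalled before Definition \ref{D:pseudostable}), hence has finite quotient singularities, hence is $\bbQ$-factorial — so every Weil divisor class is $\bbQ$-Cartier.

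Finally, for \eqref{P:PicMps3}: the push-forward formulas follow because $\Upsilon$ is birational contracting exactly $\Delta_{1,\emptyset}$, so $\Upsilon_*$ kills $\delta_{1,\emptyset}$ and fixes the other tautological classes, which are defined on $\MM_{g,n}^{\ps}$ precisely as the strict transforms / extensions of the ones on $\MM_{g,n}$. For the pull-back formulas, I would use the projection-formula identity $\Upsilon^*(M)=($the unique extension to $\MM_{g,n}$ of $M|_{M_{g,n}})$ plus a combination $c\cdot\delta_{1,\emptyset}$ of the contracted divisor, and pin down the coefficient $c$ by intersecting both sides with a curve in the fiber of $\Upsilon$ over a general cusp — concretely, the elliptic-tail curve $C_{\rm ell}$, which $\Upsilon$ contracts. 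Using $\Upsilon^*(M).C_{\rm ell}=M.\Upsilon_*(C_{\rm ell})=0$ together with the intersection numbers $\lambda.C_{\rm ell}=\frac{1}{12}$, $\delta_{\irr}.C_{\rm ell}=0$, $\delta_{1,\emptyset}.C_{\rm ell}=-1$ (equivalently, the formulas in Lemma \ref{L:forGKM} with the sign convention $\delta_{0,\{i\}}=-\psi_i$, giving $\delta_{1,\emptyset}\cdot C_{\rm ell}=-1$) forces $\Upsilon^*(\lambda)=\lambda+\delta_{1,\emptyset}$ and $\Upsilon^*(\delta_{\irr})=\delta_{\irr}+12\delta_{1,\emptyset}$; the remaining $\delta_{i,I}$ with $[i,I]\neq[1,\emptyset]$ restrict isomorphically and meet $C_{\rm ell}$ in the expected way, so have no $\delta_{1,\emptyset}$-correction. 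The main obstacle — or at least the step requiring the most care — is the bookkeeping in \eqref{P:PicMps3}: making sure the coefficient $c$ is correctly normalized (the known value is $c=12$ for $\delta_{\irr}$ and $c=1$ for $\lambda$, matching the fact that $\phi^{\ps}\circ\wh\Upsilon$ contracts the elliptic tail divisor which appears with multiplicity $12$ in $\delta_{\irr}$-type relations), and checking that $\Upsilon^*$ really is an isomorphism onto its image so that these linear-algebra identities are unambiguous — this uses that $\Upsilon$ is a divisorial contraction of relative Picard rank one, so $\Pic(\MM_{g,n})_\bbQ=\Upsilon^*\Pic(\MM_{g,n}^{\ps})_\bbQ\oplus\bbQ\delta_{1,\emptyset}$.
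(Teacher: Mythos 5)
Your treatment of parts \eqref{P:PicMps1} and \eqref{P:PicMps2} is essentially the paper's proof: part \eqref{P:PicMps1} is obtained exactly by your restriction argument (surjectivity of $\Pic(\M_{g,n})_{\bbQ}\to\Pic(\M_{g,n}\setminus\Delta_{1,\emptyset})_{\bbQ}$ from smoothness of $\M_{g,n}$, and the isomorphism $\Pic(\M_{g,n}^{\ps})_{\bbQ}\cong\Pic(\M_{g,n}\setminus\Delta_{1,\emptyset})_{\bbQ}$ from smoothness of $\M_{g,n}^{\ps}$ and the codimension-two complement), and part \eqref{P:PicMps2} is deduced, as you do, from the coarse moduli property of $\phi^{\ps}$ together with the finite quotient singularities and hence $\bbQ$-factoriality of $\MM_{g,n}^{\ps}$ (the paper cites \cite[Prop. 3.1(i)]{CTV1} instead of spelling out the averaging argument). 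For part \eqref{P:PicMps3} the paper takes the push-forward formulas as immediate, as you do, and simply quotes \cite[Prop. 3.5(iii)]{CTV1} for the pull-back formulas; your plan to reprove them by writing $\Upsilon^*(M)$ as the tautological class plus $c\,\delta_{1,\emptyset}$ and pinning down $c$ by intersecting with the contracted curve $C_{\rm ell}$ is a legitimate, and indeed the standard, substitute.

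However, the intersection numbers you feed into that computation are wrong, and they are inconsistent both with Lemma \ref{L:forGKM}, which you invoke, and with the formulas you claim they force. By Lemma \ref{L:forGKM}(1), for $L=a\lambda-b_{\irr}\delta_{\irr}-\sum b_{i,I}\delta_{i,I}$ one has $L\cdot C_{\rm ell}=a-12b_{\irr}+b_{1,\emptyset}$, so the correct values are $\lambda\cdot C_{\rm ell}=1$, $\delta_{\irr}\cdot C_{\rm ell}=12$, $\delta_{1,\emptyset}\cdot C_{\rm ell}=-1$ and $\delta_{i,I}\cdot C_{\rm ell}=0$ for $[i,I]\neq[1,\emptyset]$ (the classical pencil-of-plane-cubics numbers). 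With your values $\lambda\cdot C_{\rm ell}=\tfrac{1}{12}$ and $\delta_{\irr}\cdot C_{\rm ell}=0$, the relation $\Upsilon^*(M)\cdot C_{\rm ell}=0$ would yield $c=\tfrac{1}{12}$ for $\lambda$ and $c=0$ for $\delta_{\irr}$, i.e.\ $\Upsilon^*(\delta_{\irr})=\delta_{\irr}$, contradicting the statement you are proving; nor do your three numbers come from a different normalization of the elliptic-tail class, since the ratios $\delta_{\irr}\cdot C_{\rm ell}=12\,(\lambda\cdot C_{\rm ell})$ and $\delta_{1,\emptyset}\cdot C_{\rm ell}=-\lambda\cdot C_{\rm ell}$ are forced for any curve proportional to $C_{\rm ell}$. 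Once the correct numbers are inserted the same computation does give $c=1$ and $c=12$, so this is a repairable numerical slip rather than a structural flaw, but as written the key step of part \eqref{P:PicMps3} does not establish the pull-back formulas.
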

\begin{proof}
Part \eqref{P:PicMps1} follows from the fact that the restriction map $\Pic(\M_{g,n})\twoheadrightarrow \Pic(\M_{g,n}\setminus \Delta_{1,\emptyset})$ is surjective because $\M_{g,n}$ is smooth while the restriction map  $\Pic(\M_{g,n})\xrightarrow{\cong} \Pic(\M_{g,n}\setminus \Delta_{1,\emptyset})$ is an isomorphism since $\M_{g,n}^{\ps}$ is smooth and that $\M_{g,n}\setminus \Delta_{1,\emptyset}$ is  an open subset of $\M_{g,n}^{\ps}$ whose complement has codimension greater or equal to two (see also \cite[Cor. 1.29]{CTV1}).  

Part \eqref{P:PicMps2}  follows from the fact that $\phi^{\ps}:\M_{g,n}^{\ps}\to \MM_{g,n}^{\ps}$ is a coarse moduli space and that, if $\car(k)\neq 2,3$, $\MM_{g,n}$ has  finite quotient singularities, and hence it is $\bbQ$-factorial (see \cite[Prop. 3.1(i)]{CTV1}).

Part \eqref{P:PicMps3}: the formulas for $\Upsilon_*$ are obvious from the definition of the generators of $\Cl(\MM_{g,n})_{\bbQ}$ and of $\Cl(\MM_{g,n}^{\ps})_{\bbQ}$; the formulas for $\Upsilon^*$ are proved in \cite[Prop. 3.5(iii)]{CTV1}.
\end{proof}

We now introduce some special curves in $\MM_{g,n}^{\ps}$, that will play a key role in the sequel. 


\begin{definition}(\cite[Def. 0.1]{CTV1})\label{D:ell-bridg}
The \emph{elliptic bridge curves} are the following curves of $\MM_{g,n}^{\ps}$, well-defined up to numerical equivalence: 
\begin{itemize}
\item $C(\irr):=\Upsilon(F_s([1,\emptyset]))$ is the elliptic bridge curve of type $\{\irr\}$, if $g\geq 2$;
\item $C([\tau,I],[\tau+1,I]):=\Upsilon(F([\tau,I],[g-\tau-1,I^c]))$ is the elliptic bridge curve of type $\{[\tau,I],[\tau+1,I]\}$, for any $\{[\tau,I],[\tau+1,I]\}\subseteq T_{g,n}^*- \{[1,\emptyset]\}$.
\end{itemize}
\end{definition}

The intersections of the $\bbQ$-line bundles of $\MM_{g,n}^{\ps}$ with the elliptic bridge curves are determined by the following formulae.

\begin{lemma}\label{L:int-C}(\cite[Lemma 3.8]{CTV1})
Assume that $\car(k)\neq 2,3$. Given a $\bbQ$-line bundle $L=a\lambda+b_{\irr}\delta_{\irr}+\sum_{[i,I]\in T^*_{g,n}-\{[1,\emptyset]\}} b_{i,I} \delta_{i,I} $ in $\M_{g,n}^{\ps}$, we have the following intersection formulas
$$\begin{sis}
& C(\irr) \cdot L= a+10 b_{\irr}, \\
& C([\tau, I], [\tau+1, I])\cdot L=a+12 b_{\irr}-b_{\tau,I}-b_{\tau+1,I}.
\end{sis}$$
\end{lemma}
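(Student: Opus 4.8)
The statement to prove is Lemma~\ref{L:int-C}, which computes the intersection of a $\bbQ$-line bundle on $\MM_{g,n}^{\ps}$ with the elliptic bridge curves $C(\irr)$ and $C([\tau,I],[\tau+1,I])$. The natural strategy is to pull back along the contraction $\Upsilon\colon\MM_{g,n}\to\MM_{g,n}^{\ps}$ and compute on $\MM_{g,n}$ using the F-curve intersection numbers recorded in Lemma~\ref{L:forGKM}, together with the explicit formula for $\Upsilon^*$ given in Proposition~\ref{P:PicMps}\eqref{P:PicMps3}.

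\begin{proof}
By the projection formula, for any $\bbQ$-line bundle $L$ on $\MM_{g,n}^{\ps}$ and any curve $\Upsilon(F)$ which is the image of an F-curve $F\subset\MM_{g,n}$ we have $\Upsilon(F)\cdot L = F\cdot \Upsilon^*(L)$. By Definition~\ref{D:ell-bridg}, $C(\irr)=\Upsilon(F_s([1,\emptyset]))$ and $C([\tau,I],[\tau+1,I])=\Upsilon(F([\tau,I],[g-\tau-1,I^c]))$, so it suffices to compute $F_s([1,\emptyset])\cdot\Upsilon^*(L)$ and $F([\tau,I],[g-\tau-1,I^c])\cdot\Upsilon^*(L)$ on $\MM_{g,n}$.

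Write $L=a\lambda+b_{\irr}\delta_{\irr}+\sum_{[i,I]\neq[1,\emptyset]}b_{i,I}\delta_{i,I}$ on $\MM_{g,n}^{\ps}$. By Proposition~\ref{P:PicMps}\eqref{P:PicMps3},
$$
\Upsilon^*(L)=a\lambda+a\delta_{1,\emptyset}+b_{\irr}\delta_{\irr}+12b_{\irr}\delta_{1,\emptyset}+\sum_{[i,I]\neq[1,\emptyset]}b_{i,I}\delta_{i,I}
= a\lambda+b_{\irr}\delta_{\irr}+(a+12b_{\irr})\delta_{1,\emptyset}+\sum_{[i,I]\neq[1,\emptyset]}b_{i,I}\delta_{i,I}.
$$
Now one applies Lemma~\ref{L:forGKM}, with the sign conventions matched: writing $\Upsilon^*(L)=a\lambda-b'_{\irr}\delta_{\irr}-\sum b'_{i,I}\delta_{i,I}$ we have $b'_{\irr}=-b_{\irr}$, $b'_{1,\emptyset}=-(a+12b_{\irr})$, and $b'_{i,I}=-b_{i,I}$ otherwise. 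For $C(\irr)$ use formula~(4) of Lemma~\ref{L:forGKM}:
$$
F_s([1,\emptyset])\cdot\Upsilon^*(L)=2b'_{\irr}-b'_{1,\emptyset}=-2b_{\irr}+a+12b_{\irr}=a+10b_{\irr},
$$
which is the claimed value for $C(\irr)\cdot L$. For $C([\tau,I],[\tau+1,I])$ use formula~(5) of Lemma~\ref{L:forGKM} applied to $F([\tau,I],[g-\tau-1,I^c])=F([i,I],[j,J])$ with $i=\tau$, $I=I$, $j=g-\tau-1$, $J=I^c$, noting that $i+j=g-1$ so $I\cup J=[n]$ and the index $[i+j,I\cup J]=[g-1,[n]]$ equals $[1,\emptyset]$ under the equivalence $\sim$ of \eqref{E:setT}:
$$
F([\tau,I],[g-\tau-1,I^c])\cdot\Upsilon^*(L)=b'_{\tau,I}+b'_{g-\tau-1,I^c}-b'_{1,\emptyset}=-b_{\tau,I}-b_{\tau+1,I}+a+12b_{\irr},
$$
using $b'_{g-\tau-1,I^c}=b'_{\tau+1,I}=-b_{\tau+1,I}$ since $[g-\tau-1,I^c]=[\tau+1,I]$ in $T_{g,n}$. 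This gives exactly $C([\tau,I],[\tau+1,I])\cdot L=a+12b_{\irr}-b_{\tau,I}-b_{\tau+1,I}$, completing the proof.
\end{proof}

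The only genuine subtlety — and the point requiring care rather than difficulty — is bookkeeping: one must correctly translate between the two sign conventions (the additive convention in which $L$ is written on $\MM_{g,n}^{\ps}$ versus the convention of Lemma~\ref{L:forGKM} which writes coefficients with a minus sign), and one must verify that the various index identifications under the equivalence relation $\sim$ of \eqref{E:setT} are applied consistently (in particular that $[g-1,[n]]\sim[1,\emptyset]$, so that the coefficient $b'_{1,\emptyset}=-(a+12b_{\irr})$ is the one that enters formula~(5)). There is no conceptual obstacle; the argument is a direct computation once the pullback formula of Proposition~\ref{P:PicMps} and the F-curve intersection table of Lemma~\ref{L:forGKM} are in hand.
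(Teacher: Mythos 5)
Your computation is correct: pulling back along $\Upsilon$ via Proposition \ref{P:PicMps}\eqref{P:PicMps3}, matching signs with Lemma \ref{L:forGKM}, and using $[g-1,[n]]=[1,\emptyset]$ and $[g-\tau-1,I^c]=[\tau+1,I]$ gives exactly the stated formulas; note that this paper only quotes the lemma from \cite[Lemma 3.8]{CTV1}, and your push--pull argument is the same computation that source relies on. The only point you pass over silently is that $\Upsilon_*[F]=[\Upsilon(F)]$, i.e.\ that $\Upsilon$ maps each of these F-curves birationally onto its image, which holds because $\Upsilon$ is an isomorphism away from $\Delta_{1,\emptyset}$ and neither F-curve lies inside $\Delta_{1,\emptyset}$.
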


\section{The stack of $T$-semistable curves}\label{Sec:Tss}

The aim of this subsection is to study the  stack of $T$-semistable curves, whose definition we now recall (following the terminology of \cite[Sec. 1]{CTV1}).

\begin{definition}\label{D:type}[Types of tacnodes](\cite[Def. 1.6]{CTV1})
Let $(C, \{p_i \}_{i=1}^n)$ be a $n$-pointed curve such that $C$ is Gorenstein and $\omega_C(\sum_{i=1}^n p_i)$ is ample.
Let $p \in C$ be a tacnode. We say that $p$ is of type:
\begin{itemize}
\item $\type(p):=\{\irr\}\subseteq T_{g,n} $ if the normalisation of $C$ at  $p$ is connected;
\item  $\type(p):=\{[\tau,I],[\tau+1,I]\} \subseteq T_{g,n}$ if the normalisation of $C$ at $p$ consists of two connected components, one of which has arithmetic genus  $\tau$ and marked points $\{p_i\}_{i \in I}$ (and then the other one will have arithmetic genus $g-\tau-1$ and marked points $\{p_i\}_{i\in I^c}$).
\end{itemize}
\end{definition}

\begin{definition}\label{D:pseudostable} (\cite[Def. 1.16]{CTV1})
Let $T\subseteq T_{g,n}$.
\begin{enumerate}[(i)]
\item A \emph{$T$-semistable}  ($n$-pointed) curve  is an $n$-pointed curve $(C, \{p_i \}_{i=1}^n)$ such that:
\begin{enumerate}[(a)]
\item $C$ has only nodes, cusps, and tacnodes of type contained in $T$ as singularities;
\item $C$ does not have $A_1$-attached and $A_3$-attached elliptic tails;
\item $\omega_{C}(\sum p_i)$ is ample.
\end{enumerate}
\item The stack of $T$-semistable $n$-pointed curves of genus $g$, denoted by $\M_{g,n}^{T}$, parametrises flat, proper families of $n$-pointed curves $(\pi: \m C \to B,\{\sigma_i\}_{i=1}^n)$, where $\{\sigma_i\}_{i=1}^n$ are distinct sections that lie in the smooth locus of $\pi$, such that the line bundle $\omega_{\m C /B}(\sum \sigma_i)$ is relatively ample and the geometric fibres of $\pi$ are  $T$-semistable  $n$-pointed curves of  genus $g$.
\end{enumerate}
\end{definition}
Recall that the stack $\M_{g,n}^{T}$ is a smooth and  irreducible algebraic stack of finite type over $k$ (see \cite[Theorem 1.19]{CTV1}). From the definition, it is easy to see that $\M_{g,n}^{\emptyset}=\M_{g,n}^{\ps}$. 

Let us study the relations among the stacks $\M_{g,n}^T$. Note that if $T\subseteq T'$ then $\M_{g,n}^T\subseteq \M_{g,n}^{T'}$ but it may very well be the case that $\M_{g,n}^T=\M_{g,n}^{T'}$ for $T\subsetneq T'$. In order to characterise when this happens, we introduce the following notions.

\begin{definition}\label{def:admissible}
\noindent
\begin{enumerate}[(i)]
\item A subset  $T \subseteq T_{g,n}$  is called \emph{admissible} if $[1,\emptyset]\not\in T$ and for every $[\tau,I]$ in $T$ then either $[\tau-1,I]$ or $[\tau+1,I]$ are in $T$. If $g=1$, we also require that $\irr \not \in T$.
\item Given a subset $T\subset T_{g,n}$, we obtain an admissible subset $T^{\adm}\subseteq T$ in the following two steps:
\begin{itemize}
\item first we set $\wt T:=T-\{[1,\emptyset]\}$ if $g\geq 2$ and $\wt T:=T-\{[1,\emptyset], \irr\}$ if $g\leq 1$;
\item then we remove from $\wt T$ all the elements $[\tau,I]\in \wt T$ such that $[\tau-1,I]\not \in \wt T$ and $[\tau+1,I]\not \in \wt T$.
\end{itemize}  
\item A subset $T\subset T_{g,n}$ is said to be \emph{minimal} if $T=\{\irr\}$ and $g\geq 2$ or $T=\{[\tau, I],[\tau+1,I]\}$ (which then forces $g\geq 2$ or $g=1$ and $n\geq 2$) for some element $[\tau, I]\neq [1,\emptyset]$ of $T_{g,n}$. 
\end{enumerate}
\end{definition}
Observe that the empty set is admissible and it is the unique admissible subset if $g=0$ or if $(g,n)=(1,0)$. If $g\geq 2$ or $g=1$ and $n\geq 2$, then the minimal subsets are exactly the smallest admissible non-empty subsets of $T_{g,n}$. Moreover, a subset $T\subset T_{g,n}$ is admissible if and only if it is the union of the minimal subsets contained in $T$.

\begin{proposition}\label{P:equaT}
Given two subsets $T,S\subseteq T_{g,n}$, we have that 
$$\M_{g,n}^T\subseteq \M_{g,n}^S \Longleftrightarrow  T^{\adm}\subseteq S^{\adm}.$$ 
In particular, we have that $\M_{g,n}^T= \M_{g,n}^S  \Longleftrightarrow  T^{\adm}= S^{\adm}$.

\end{proposition}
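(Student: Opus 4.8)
The plan is to prove the equivalence $\M_{g,n}^T\subseteq \M_{g,n}^S \Leftrightarrow T^{\adm}\subseteq S^{\adm}$ by first reducing to a comparison of the \emph{sets of singularity types that actually occur}, and then analysing which tacnode types can be ``realised'' by a $T$-semistable curve. The key point is that the inclusion of stacks is governed entirely by the geometric points: $\M_{g,n}^T\subseteq \M_{g,n}^S$ holds if and only if every $T$-semistable curve is $S$-semistable, which by Definition \ref{D:pseudostable} happens if and only if every tacnode type that can appear on a $T$-semistable curve also lies in $S$ (conditions (b) and (c) being independent of $T$, and the node/cusp part being common to all the stacks).

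First I would make precise the notion of a \emph{realisable} type. Say a type $t\in T_{g,n}$ (either $\irr$ or a pair $\{[\tau,I],[\tau+1,I]\}$) is $T$-realisable if there exists a $T$-semistable $n$-pointed curve of genus $g$ having a tacnode of type $t$. Then the previous paragraph gives
\[
\M_{g,n}^T\subseteq \M_{g,n}^S \iff \{t : t \text{ is } T\text{-realisable}\}\subseteq S.
\]
The technical heart of the proof is then the identification
\[
\{t : t \text{ is } T\text{-realisable}\} = T^{\adm}.
\]
The inclusion $\supseteq$ of this (every element of $T^{\adm}$ is $T$-realisable) is a construction: given $t=\{[\tau,I],[\tau+1,I]\}\in T^{\adm}$, one builds a genus $g$ curve with a single tacnode of type $t$ by gluing two smooth pointed curves of genera $\tau$ and $g-\tau-1$ tacnodally; one must check that such a curve has ample log-canonical class and contains no $A_1$- or $A_3$-attached elliptic tail. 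The reason $T^{\adm}$ and not $T$ appears is precisely the elliptic-tail condition (b): a tacnode of type $[1,\emptyset]$ forces an $A_3$-attached elliptic tail (hence $[1,\emptyset]$ is never realisable, explaining its removal), and in the cases $g\le 1$ the type $\irr$ is problematic for the same reason; similarly, the second pruning step in the definition of $T^{\adm}$ removes the types $[\tau,I]$ for which \emph{neither} neighbour $[\tau\pm1,I]$ lies in $T$ — these correspond to situations where, after deforming/smoothing, one is forced into an excluded configuration (this is where admissibility is genuinely needed, and it is the subtle part).

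The inclusion $\subseteq$ (every $T$-realisable type lies in $T^{\adm}$) is the other direction: if a $T$-semistable curve $C$ has a tacnode of type $t$, then certainly $t\in T$; I must further show $t\in T^{\adm}$, i.e.\ $t\ne[1,\emptyset]$ (and $t\ne\irr$ if $g\le1$) and $t$ survives the pruning. The first is exactly condition (b) ruling out $A_3$-attached elliptic tails. For the pruning step: if $t=\{[\tau,I],[\tau+1,I]\}$ is realised on $C$ but $[\tau-1,I]\notin T$ and $[\tau+1,I]\notin T$, one analyses the normalisation of $C$ at that tacnode and the local structure near it to derive a contradiction with $T$-semistability — the idea being that any such curve degenerates to or contains a configuration with a tacnode of a type not in $T$, or else an excluded elliptic tail. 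I would then assemble everything: $\M_{g,n}^T\subseteq\M_{g,n}^S \Leftrightarrow (T\text{-realisable types})\subseteq S \Leftrightarrow T^{\adm}\subseteq S$, and finally observe that $T^{\adm}\subseteq S\Leftrightarrow T^{\adm}\subseteq S^{\adm}$ since $T^{\adm}$ consists only of admissible (hence $S^{\adm}$-stable) types — more carefully, $T^{\adm}$ is a union of minimal subsets and each such minimal subset contained in $S$ is contained in $S^{\adm}$, giving the ``in particular'' statement $\M_{g,n}^T=\M_{g,n}^S\Leftrightarrow T^{\adm}=S^{\adm}$ for free.

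I expect the main obstacle to be the pruning direction, i.e.\ showing that a type $\{[\tau,I],[\tau+1,I]\}$ with \emph{both} neighbours absent from $T$ is not $T$-realisable (and conversely that a type with at least one neighbour present genuinely \emph{is} realisable without forcing an excluded elliptic tail). This requires a careful local-to-global deformation analysis of a tacnodal curve: understanding which partial smoothings/degenerations of a curve with such a tacnode stay inside the stack $\M_{g,n}^T$, and pinning down exactly when an elliptic tail of type $A_1$ or $A_3$ is forced to appear. The node/cusp bookkeeping and the ampleness of $\omega_C(\sum p_i)$ are routine once the combinatorial setup of types is in place; the elliptic-tail exclusions are what make $T^{\adm}$ rather than $T$ the correct invariant, and handling all the cases $g\ge2$, $g=1$, $g=0$ uniformly is where the bulk of the work lies.
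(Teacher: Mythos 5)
Your overall route is essentially the paper's: both reduce the containment of stacks to the question of which tacnode types can actually occur on a $T$-semistable curve, identify that collection with $T^{\adm}$, and separate admissible subsets by exhibiting a curve whose unique singularity is a tacnode of the relevant type. Your realisability direction is exactly the paper's construction (two smooth pointed curves glued at a tacnode, or an irreducible tacnodal curve for $\irr$ with $g\geq 2$), and the checks you flag — ampleness of the log canonical class and absence of $A_1$/$A_3$-attached elliptic tails — are precisely the ones needed; they succeed because $[1,\emptyset]\notin T^{\adm}$. Likewise, the exclusion of $[1,\emptyset]$ from realisable types via the elliptic-tail condition is the paper's first step, and your closing combinatorial observation (minimal subsets of $S$ lie in $S^{\adm}$) correctly yields the ``in particular'' statement.

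The defect is in the step you call the ``technical heart'': the pruning direction is set up incoherently and needs no deformation analysis at all. By Definition \ref{D:type} the type of a tacnode is either $\{\irr\}$ or an adjacent pair $\{[\tau,I],[\tau+1,I]\}$, and by Definition \ref{D:pseudostable} the \emph{whole} type must be contained in $T$; hence the situation you propose to rule out (``$t=\{[\tau,I],[\tau+1,I]\}$ is realised on $C$ but $[\tau+1,I]\notin T$'') can never arise, and an element of $T$ with no neighbour in $T$ simply cannot belong to the type of any tacnode of a $T$-semistable curve. Once $[1,\emptyset]$ is excluded, both members of a realised pair lie in $\wt T$ and are each other's neighbours, so they survive the pruning; this is the paper's Step III and it is pure bookkeeping with types, not a ``local-to-global deformation analysis'' of smoothings — promising such an analysis where none is needed (and none is specified) is the one genuine gap in the write-up, while the actual geometric content sits in the two places you treat more briefly. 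Two smaller inaccuracies: a tacnode of type $\{[1,\emptyset],[2,\emptyset]\}$ forces an $A_3$-attached \emph{or} an $A_1$-attached elliptic tail, depending on whether the genus-one piece of the normalisation is irreducible (both are excluded by condition (b)); and $\irr$ is unrealisable for $g\leq 1$ by a genus count (normalising a non-separating tacnode drops the genus by two), not ``for the same reason'' as the elliptic-tail exclusion.
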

\begin{proof}
We will divide the proof in four steps.

\un{Step I}: If $\{[1,\emptyset]\}\in T$ and we let $\wt T:=T-\{[1,\emptyset]\}$ then we have that 
$$
\M_{g,n}^T =\M_{g,n}^{\wt T}.
$$ 

Indeed, the $n$-pointed curves that belong to $\M_{g,n}^T\setminus \M_{g,n}^{\wt T}$ are those $n$-pointed curves $(C, \{p_i \})$ containing a tacnode $p\in C$ with $\type(p)=\{[1,\emptyset], [2,\emptyset]\}$ (in particular, if $\{[2,\emptyset]\}\not\in T$ then 
$\M_{g,n}^T =\M_{g,n}^{\wt T}$). However, if $p$ is such a tacnode then the normalisation of $C$ at $p$ will have one connected component $D$ of arithmetic genus one and without marked points. From the ampleness of $\omega_C(\sum_{i=1}^n p_i)$ it follows that either $D$ is an irreducible curve or $D$ has two irreducible components $E$ and $R$ of arithmetic genera, respectively, $1$ and $0$, meeting in a node $q$ and such that $p\in R$. In the first case, $(E,p)$ is an $A_3$-attached elliptic tail of $(C, \{p_i \})$ while in the second case $(E,q)$ is an $A_1$-attached elliptic tail of 
 $(C, \{p_i \})$.  However, both cases are impossible because if $(C, \{p_i \})\in \M_{g,n}^T$ then it cannot contain  $A_1$-attached or $A_3$-attached elliptic tails.  Hence, we conclude that $\M_{g,n}^T =\M_{g,n}^{\wt T}$.

\un{Step II}: If $g\leq 1$ and $\irr \in T$, then if we let $\wt T:=T-\{\irr\}$ then we have that 
$$
\M_{g,n}^T =\M_{g,n}^{\wt T}.
$$ 

Indeed, this follows immediately from the fact that if there exists a curve $(C,\{p_i\})\in \M_{g,n}^T$ having a tacnode of type $\{\irr\}$ then $g\geq 2$.  

\un{Step III}: For any $T\subseteq T_{g,n}$, we have that 
$$
\M_{g,n}^T =\M_{g,n}^{T^{\adm}}.
$$ 

Indeed, by Step I e II above, we can assume, up to replacing $T$ with $\wt T:=T-\{[1,\emptyset]\}$ if $g\geq 2$ and with $\wt T:=T-\{[1,\emptyset], \irr\}$ if $g\leq 1$,  that $[1,\emptyset]\not\in T$ and also that $\irr\not\in T$ if $g\leq 1$. It is then enough to show that if $[\tau,I]$ is an element of $T$ such that $[\tau-1,I]\not\in T$ and $[\tau+1,I]\not\in T$, then if we set $\wh T=T-\{[\tau, I]\}$ then we have that 
$$
\M_{g,n}^T =\M_{g,n}^{\wh{T}}.
$$ 
This is true because, given an $n$-pointed curve $(C, \{p_i \})\in \M_{g,n}^T$ , the type of a tacnode  cannot contain $[\tau, I]$ for otherwise it would contain either $[\tau-1, I]$ or $[\tau+1, I]$, which however do not belong to $T$ by assumption. Hence, the $n$-pointed curve  $(C, \{p_i \})$ 
belongs to $\M_{g,n}^{\wh T}$.

\un{Step IV}: Given $T$ and $S$ admissible subsets of $T_{g,n}$, we have that 
$$\M_{g,n}^T\subseteq \M_{g,n}^S  \Longleftrightarrow  T\subseteq S.$$

The implication $\Leftarrow$ is clear. In order to show the implication $\Rightarrow$, we will show that if $T\not\subseteq S$ then $\M_{g,n}^T\not\subseteq \M_{g,n}^S$. Since $T\not\subseteq S$, then either $\irr\in T-S$ or $[\tau,I]\in T-S$ for some $[\tau, I]\in T_{g,n}$. 

If $\irr\in T-S$ (which forces $g\geq 2$ because $T$ is admissible), then consider an $n$-pointed irreducible curve $(C,\{p_i\}\}$ of arithmetic genus $g$ having a unique singular point $p\in C$ which is furthermore a tacnode: such a curve exists in any genus $g\geq 2$ and for any $n\geq 0$, and it belongs to $\M_{g,n}^T\setminus \M_{g,n}^S$.

If, instead, $[\tau,I]\in T-S$ for some $[\tau, I]\in T_{g,n}$ then, since $T$ is admissible, we must have that $[\tau,I]\neq [1,\emptyset]$ , and either $[\tau+1,I]\in T$ or $[\tau-1,I]\in T$. Suppose for simplicity that $[\tau+1,I]\in T$ (which then forces $(\tau,I)\neq (g-1,[n])$); the other case is treated similarly by replacing $\tau$ with $\tau-1$ in what follows. Consider an $n$-pointed  curve $(C,\{p_i\}\}$  having two irreducible smooth components $D_1$ and $D_2$ meeting in one tacnode $p$, and such that $D_1$ has genus $\tau$ and contains the marked points $\{p_1\}_{i\in I}$ while $D_2$ has genus $g-\tau-1$ and contains the marked points $\{p_i\}_{i\in I^c}$. Observe that $C$ has arithmetic genus $g$, the line bundle $\omega_C(\sum_i p_i)$ is ample because $(\tau, I), (g-1-\tau,I^c) \neq (0,\emptyset)$, and $C$ does not contain 
$A_3$-attached elliptic tails because $(\tau,I), (g-1-\tau, I^c)\neq (1,\emptyset)$. Moreover, since $\type(p)=\{[\tau,I], [\tau+1, I]\}\subseteq T$ and $[\tau,I]\not\in T$, we get that 
$(C,\{p_i\}\}\in \M_{g,n}^T\setminus \M_{g,n}^S$.  
\end{proof}

\begin{remark}\label{R:number}
It follows from \cite[Lemma 3.12]{CTV1} that the number of admissible subsets of $T_{g,n}$ is the same as the number of subfaces of the elliptic bridge face, which by \cite[Rmk. 3.10]{CTV1} is equal to

$$
\begin{cases}
1 & \text{ if } g=0,\\
2 & \text{ if } (g,n)=(2,0), \\
2^{\frac{g-1}{2}} & \text{ if } n=0 \text{ and } g\geq 3 \: \text{ is odd, }\\
2^{\frac{g}{2}-1} & \text{ if } n=0 \text{ and } g\geq 4 \: \text{ is even, }\\
2^{g 2^{n-1}-1} & \text{ if } g\geq 1 \: \text{ and } n\geq 1.
\end{cases}
$$
This corresponds to the number of contractions $f_T :  \MM_{g,n}^{\ps} \to \MM_{g,n}^T$ given by Theorem \ref{T:goodsp}(\ref{T:goodsp2}).

\end{remark}

For later use, we need to recall from \cite{CTV1} a description of the closed points of the stack $\M_{g,n}^T$.

\begin{definition}\label{D:ell}[$A_1$/$A_1$-attached bridges and their types](see \cite[Def. 1.1, 1.2, 1.6]{CTV1})
\noindent 
\begin{enumerate}[(i)]

\item An \emph{elliptic bridge} is a $2$-pointed curve $(E,q_1,q_2)$ of arithmetic genus $1$ which is either irreducible  or it has two  rational smooth components $R_1$ and $R_2$ that meet in either two nodes or one tacnode   and such that $q_i\in R_i$ for $i=1,2$.
The unique elliptic bridge  containing a tacnode is called the \emph{tacnodal elliptic bridge.}

\item Let $(C, \{p_i \}_{i=1}^n)$ be an $n$-pointed curve of genus $g$.  We say that $(C, \{p_i \}_{i=1}^n)$ has an \emph{$A_{1}/A_{1}$-attached elliptic bridge}  if there exists a finite morphism $\gamma: (E,q_1,q_2) \to (C, \{p_i \}_{i=1}^n)$ (called gluing morphism) such that:
\begin{enumerate}[(a)]
\item $(E,q_1,q_2)$ is an elliptic bridge;
\item  $\gamma$ induces an open embedding of $E-\{q_1,q_2\}$ into $C- \cup_{i=1}^n \{p_i \}$; 
\item $\gamma(q_i)$ is an $A_{1}$-singularity or  a marked point (for  $i=1,2$).
\end{enumerate}
An $A_1/A_1$-attached elliptic bridge $\gamma: (E,q_1,q_2) \to (C, \{p_i \}_{i=1}^n)$  such that $\gamma(q_1)=\gamma(q_2)$ is called \emph{closed}. In this case $\gamma$ is surjective and $(g,n)=(2,0)$.

\item Let $(C, \{p_i \}_{i=1}^n)$ be a $n$-pointed curve such that $C$ is Gorenstein and $\omega_C(\sum_{i=1}^n p_i)$ is ample and let $\gamma:(E,q_1,q_2)\to (C, \{p_i \}_{i=1}^n)$ be an $A_{1}/A_{1}$-attached elliptic bridge. We say that $(E,q_1,q_2)$ is of type:
\begin{itemize}
\item $\type(E,q_1,q_2):=\{[0,\{p_i\}],[1,\{p_i\}]\} \subseteq T_{g,n}$ if either $\gamma(q_1)=p_i$ or $\gamma(q_2)=p_i$;

\item $\type(E,q_1,q_2):=\{\irr\}\subseteq T_{g,n} $ if $\gamma(q_1)$ and $\gamma(q_2)$ are singular points (either nodes or tacnodes) of $C$ and $\ov{C \setminus \gamma(E)}$ is connected (which includes also the case of a closed $A_{1}/A_{1}$-attached elliptic bridge, in which case $\ov{C \setminus \gamma(E)}=\emptyset$);
\item  $\type(E,q_1,q_2):=\{[\tau,I],[\tau+1,I]\} \subseteq T_{g,n}$ if  $\gamma(q_1)$ and $\gamma(q_2)$ are  singular points (either nodes or tacnodes) of $C$ and $\ov{C \setminus \gamma(E)}$ consists of two connected component, one of  which has arithmetic genus  $\tau$ with marked points $\{p_i\}_{i \in I}$.
\end{itemize}

\end{enumerate}
\end{definition}

Note that a  tacnodal elliptic bridge is the same thing as an  open rosary of length $2$ in the sense of \cite[Def. 1.3]{CTV1} and, therefore, it carries an action of $\Gm$ described explicitly in \cite[Rmk. 1.4]{CTV1}.

\begin{proposition}\label{P:Tclosed}(see \cite[Prop. 1.24]{CTV1})
Fix a subset $T\subset T_{g,n}$ and assume that $(g,n)\neq (2,0)$ and $\car(k)\neq 2$.

A curve $(C, \{p_i \})$ is a closed point of $\M_{g,n}^T$ if and only if $(C, \{p_i \})$ is $T$-closed, i.e. if there exists a decomposition (called the $T$-canonical decomposition) $(C,\{p_i\})=K \cup (E_1,q_1^1,q_2^1) \cup \cdots \cup (E_r,q^r_1,q^r_2)$, such that 

\begin{enumerate}[(i)]

\item  \label{def:Tclosed1} $ (E_1,q_1^1,q_2^1), \ldots, (E_r,q^r_1,q^r_2)$ are $A_1$/$A_1$-attached tacnodal elliptic bridges  of type contained in $T$.

\item \label{def:Tclosed2} $K$ does not contain tacnodes nor  $A_1$/$A_1$-attached elliptic bridges of type contained in $T$. In particular, every connected component of $K$  is a pseudo-stable curve that does not contain any 
$A_1$/$A_1$-attached elliptic bridge of type contained in $T$.

\end{enumerate}

Here $K$ (which is allowed to be empty or disconnected) is regarded as a pointed curve with marked points given by the union  of $\{p_i\}_{i=1}^n \cap K$ and of $K\cap (\overline{C \setminus K})$.
\end{proposition}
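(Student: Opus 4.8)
The plan is to reduce to the analogous classification already proved in \cite[Prop. 1.24]{CTV1}, after first understanding what it means for a point of $\M_{g,n}^T$ to be closed in terms of the good moduli space theory recalled in Section \ref{Sec:Tss}. Since the excerpt explicitly attributes the statement to \cite[Prop. 1.24]{CTV1}, the natural strategy is not to reprove it from scratch but to deduce it: I would first invoke the fact that $\M_{g,n}^T$ admits a good moduli space $\MM_{g,n}^T$ (Theorem \ref{T:goodsp}, valid since $(g,n)\neq(2,0)$ and $\car(k)\neq 2$), and recall that the closed points of an algebraic stack admitting a good moduli space are precisely the points whose associated closed substack maps isomorphically onto a point of the good moduli space — equivalently, the geometric points $x$ with $\overline{\{x\}}$ minimal, or those whose automorphism group contains a maximal torus acting with the right weights. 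The heart of the matter is then the \emph{local} statement: which degenerations can be effected by one-parameter subgroups $\Gm\hookrightarrow \Aut(C,\{p_i\})$ inside the stack $\M_{g,n}^T$.

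The key geometric input is the $\Gm$-action on an open rosary of length $2$, i.e.\ on a tacnodal elliptic bridge, described in \cite[Rmk. 1.4]{CTV1}: this action degenerates a smooth (or nodal) elliptic bridge to the tacnodal one. Concretely, I would argue in two directions. For the ``only if'' direction: given a closed point $(C,\{p_i\})$ of $\M_{g,n}^T$, I would show that (a) every elliptic bridge appearing as an $A_1/A_1$-attached subcurve with type in $T$ must already be the tacnodal one — otherwise the rosary $\Gm$-action would produce a strictly smaller orbit inside $\M_{g,n}^T$, contradicting closedness — and (b) the remaining part $K$, being a pseudostable curve by the definition of $\M_{g,n}^T$ (no tacnodes of type outside $T$, no elliptic tails), cannot itself contain further degenerable elliptic-bridge configurations of type in $T$, since such a configuration would again admit a degenerating $\Gm$. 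This gives the $T$-canonical decomposition. For the ``if'' direction, I would take a $T$-closed curve with its decomposition $K\cup(E_1,\dots)\cup\cdots\cup(E_r,\dots)$ and show directly that no nontrivial $\Gm\hookrightarrow\Aut$ can further degenerate it within $\M_{g,n}^T$: the automorphisms of a tacnodal elliptic bridge rel its two attaching points form (up to finite) a single $\Gm$ whose only limit is the bridge itself, and $K$ being pseudostable-with-no-degenerable-subcurve has finite (hence fixed-point) behaviour; combining over the components shows the orbit is closed.

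The main obstacle I anticipate is the bookkeeping in the ``only if'' direction: one must carefully characterise \emph{all} ways a $\Gm$-action on a $T$-semistable curve can arise, and check that any such action either fixes the curve or comes (up to conjugation and up to the finite part of the automorphism group) from the rosary action on a tacnodal elliptic bridge — in particular ruling out exotic one-parameter subgroups coming from automorphisms of the pseudostable part $K$ or from the interaction between $K$ and the bridges at the attaching nodes. This is exactly the local deformation-theoretic analysis carried out in \cite{CTV1}, and I would cite it rather than redo it, but the logical passage from ``no degenerating $\Gm$'' to ``the explicit decomposition'' requires a short induction on the number of elliptic bridges that one should spell out. The degenerate case $(g,n)=(2,0)$ is excluded precisely because there a closed $A_1/A_1$-attached elliptic bridge ($\gamma(q_1)=\gamma(q_2)$) makes the decomposition $C=K\cup E$ with $K=\emptyset$ behave pathologically; the hypothesis $\car(k)\neq 2$ is needed for the rosary $\Gm$-action and for the good moduli space to exist, so both assumptions enter exactly where the proof of \cite[Prop. 1.24]{CTV1} uses them.
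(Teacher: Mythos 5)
The paper does not reprove this statement: Proposition \ref{P:Tclosed} is recalled verbatim from \cite[Prop.~1.24]{CTV1}, so your decision to cite rather than redo the local analysis is consistent with what the paper itself does. The problems lie in the sketch you offer around that citation. First, you justify the reduction by invoking Theorem \ref{T:goodsp} ``valid since $(g,n)\neq(2,0)$ and $\car(k)\neq 2$'', but that theorem requires $\car(k)\gg(g,n)$, a strictly stronger hypothesis than the one in Proposition \ref{P:Tclosed}; moreover, in \cite{CTV1} the description of the closed points is established directly via isotrivial specialisations and is an \emph{input} to the construction of the good moduli space, so deducing it from the existence of $\MM_{g,n}^T$ is both unavailable under the stated hypotheses and dangerously close to circular. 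Second, closedness of a point of $\M_{g,n}^T$ is the statement that every isotrivial specialisation $(C,\{p_i\})\rightsquigarrow (C',\{p_i'\})$ inside the stack is trivial; such specialisations are governed by $\mathbb{G}_m$-actions on the \emph{limit} curve $(C',\{p_i'\})$ (basins of attraction in its deformation space), not by one-parameter subgroups of $\Aut(C,\{p_i\})$. Hence your ``if'' direction — showing that no $\mathbb{G}_m\hookrightarrow\Aut(C,\{p_i\})$ degenerates a $T$-closed curve — does not prove closedness: one must rule out that the $T$-closed curve lies in the basin of attraction of some non-isomorphic $T$-semistable curve, which is a different (and harder) verification.

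There is also a substantive omission in the ``only if'' direction. To obtain the $T$-canonical decomposition one must show not only that every $A_1/A_1$-attached elliptic bridge of type in $T$ occurring in a closed point is already the tacnodal one, but also that \emph{every tacnode} of a closed point lies on an $A_1/A_1$-attached tacnodal elliptic bridge, so that $K$ contains no tacnodes at all. A tacnode directly joining two non-rational components (or attached otherwise than at nodes/marked points) is not excluded by your argument (a)--(b); ruling it out requires the ``sprouting'' isotrivial specialisation in which two rational branches carrying the tacnode are inserted and the original components are attached nodally — exactly the degeneration $C_t\rightsquigarrow C_{nt}$ visible in Remark \ref{R:Tclosed-g2} and Figure \ref{Fig-2}. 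Without this mechanism, your decomposition argument only controls bridges that are already present as subcurves, and the claim that the core $K$ is tacnode-free is unproved. So, as a blind argument, the proposal has genuine gaps in both directions, even though the headline strategy (quote \cite[Prop.~1.24]{CTV1}) coincides with the paper's.
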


The above results is false for $(g,n)=(2,0)$ and $T^{\adm}=\{\irr\}$ (the other possibility being $T^{\adm}=\emptyset$ in which case $\M_2^T=\M_2^{\ps}$ by Proposition  \ref{P:equaT}), as we now discuss.

\begin{remark}[Closed points in $\M_2^{\irr}$]\label{R:Tclosed-g2}
The curves in $\M_2^{\irr}$ are  of the following type:  smooth curve $C_{\emptyset}$,  integral curve $C_n$ with one node and geometric genus $1$,   integral curve  $C_c$ with one cusp and geometric genus $1$,   rational curve with two nodes $C_{nn}$, a rational curve $C_{nc}$ with one node and one cusp,  curve $C_{nnn}$ made of two smooth rational curves meeting in three nodes,  rational curve $C_{cc}$ with two cusps, rational curve $C_t$ with one tacnode and  curve $C_{nt}$ made of two smooth rational curves meeting in one node and one tacnode.

The isotrivial specialisation between these curves are the following ones: $C_c$ and $C_{nc}$ isotrivially specialise to $C_{cc}$ (see \cite[Thm. 1]{HL}); 
$C_n$,  $C_{nn}$,  $C_{nc}$,  $C_{nnn}$ and   $C_t$ isotrivially specialise to  $C_{nt}$ (see \cite[Lemma 1.8]{CTV1}).
Therefore the closed points of $\M_2^{\irr}$ are the smooth curves and the two curves $C_{cc}$ and $C_{nt}$. 

A picture of all the strata of $\M_2^{\irr}$ together with all the degenerations (isotrivial or not) among them can be found in Figure \ref{Fig-2}.
\end{remark}

\begin{figure}[!h]
	\begin{center} \hspace*{-4em}
		\begin{tikzpicture}[scale=0.9]
		
		\node at (6,1.5) {Strata of $\M_{2}^{\irr}$};
		\node at (-2.5,1.5) {$\dim$};
		\draw[thick] (-3,1) to (15,1);
		\draw[thick] (-2,2) to (-2,-20);
		
		\node at (-2.5,0) {3};
		\node at (-1+4,0) {$C_\emptyset=$};
		\coordinate (a) at (0+4, 0);
		\coordinate[label=right:{2}] (b) at (3+4, 0);
		\draw [very thick, in=205, out=25] (a) to (b);
		
		\draw[->, very thick] (1+4,-0.5) -- (1+4,-1.8);

		\node at (-2.5,-3) {2};
		\node at (-1+4,-3) {$C_{n}=$};

		\coordinate (x) at (0+4,-4);
		\coordinate (y) at (1+4, -2);
		\coordinate[label=right:1] (z) at (2+4, -4);
		\draw [very thick] (x) to[in=0, out=25] (y) to[in=180, out=180] (z);

		\draw[->, very thick] (1+4,-4.5) -- (2,-6);	
		
		\draw[->, very thick] (1+4,-4.5) -- (8,-6);	
		
		\draw[thick,->,
		line join=round,
		decorate, decoration={
			zigzag,
			segment length=8,
			amplitude=.9,post=lineto,
			post length=4pt
		}](2+4,-3) -- (9.6,-3) -- (12.3,-14.5-2);

		\node   at (-2.5,-7) {1};
		\node   at (-1,-7) {$C_c=$};
		
		\coordinate (x) at (0,-8);
		\coordinate (y) at (1, -6);
		\coordinate[label=right:1] (z) at (2, -8);
		\draw [very thick] (x) to[in=-90, out=25] (y) to[in=180, out=-90] (z);

		\draw[->, very thick] (1,-8.5) -- (1,-9.5-1);

		\draw[thick,->,
		line join=round,
		decorate, decoration={
			zigzag,
			segment length=8,
			amplitude=.9,post=lineto,
			post length=4pt
		}](2,-8.5)-- (4,-10) -- (6,-14.5-2);

		\node at (5,-7) {$C_{nn}=$};
		
		\coordinate (x) at (6,-8);
		\coordinate (y) at (7, -6);
		\coordinate (z) at (8, -8);
		\draw [very thick] (x) to[in=0, out=25] (y) to[in=180, out=180] (z);
		
		\coordinate (u) at (9,-6);
		\coordinate (v) at (10, -8);
		\draw [very thick] (z) to[in=0, out=0] (u) to[in=155, out=180] (v);

		\draw[->, very thick] (7,-8) -- (7,-10.5-1);	
		
		\draw[->, very thick] (7,-8) -- (2,-10-1);	
		
		\draw[->, very thick] (10,-7) -- (13,-11);	
		
		\draw[thick,->,
		line join=round,
		decorate, decoration={
			zigzag,
			segment length=8,
			amplitude=.9,post=lineto,
			post length=4pt
		}](9,-8) -- (12,-14.5-2);

		\node at (-2.5,-11-1) {0};
		\node at (-1,-11-1) {$C_{nc}=$};
		
		\coordinate (x) at (0,-12-1);
		\coordinate (y) at (1, -10-1);
		\coordinate (z) at (2, -12-1);
		\draw [very thick] (x) to[in=0, out=25] (y) to[in=180, out=180] (z);

		\coordinate (u) at (3,-10-1);
		\coordinate (v) at (4, -12-1);
		\draw [very thick] (z) to[in=-90, out=0] (u) to[in=155, out=-90] (v);

		\draw[thick,->,
		line join=round,
		decorate, decoration={
			zigzag,
			segment length=8,
			amplitude=.9,post=lineto,
			post length=4pt
		}](2,-13.3) -- (4,-14.5-2);

		\draw[thick,->,
		line join=round,
		decorate, decoration={
			zigzag,
			segment length=8,
			amplitude=.9,post=lineto,
			post length=4pt
		}](2,-13.3) -- (11.7,-15-2);

		\node   at (6.5,-11-1) {$C_{nnn}=$};
		
		\coordinate (x) at (8, 2.5-12-1);
		\coordinate (y) at (8.5, 1-12-1);
		\coordinate (w) at (9, -0.5-12-1);
		\draw [very thick] (x) to[in=45, out=0, looseness=1.5] (y) to[in=180, out=225, looseness=1.5] (w);

		\coordinate (x) at (9, 2.5-12-1);
		\coordinate (z) at (8, -0.5-12-1);
		\draw [very thick] (x) to[in=135, out=180, looseness=1.5] (y) to[in=0, out=-45, looseness=1.5] (z);

		\draw[thick,->,
		line join=round,
		decorate, decoration={
			zigzag,
			segment length=8,
			amplitude=.9,post=lineto,
			post length=4pt
		}](8.5,-13.8) -- (11.7,-14.8-2);

		\node at (12.5,-11-1) {$C_{t}=$};

		\coordinate (x) at (0+14,-9.5-1);
		\coordinate (y) at (0.5+14, -10-1);
		\coordinate (w) at (0+14, -11.5-1);
		\coordinate (z) at (0.5+14, -12.5-1);
		\draw [very thick] (x) to[in=90, out=0] (y) to[in=90, out=-90] (w) to[in=135, out=-90] (z);

		\coordinate (x) at (0+14,-9.5-1);
		\coordinate (y) at (-0.5+14, -10-1);
		\coordinate (z) at (-0.5+14, -12.5-1);
		\draw [very thick] (x) to[in=90, out=180] (y) to[in=90, out=-90] (w) to[in=35, out=-90] (z);

		\draw[thick,->,
		line join=round,
		decorate, decoration={
			zigzag,
			segment length=8,
			amplitude=.9,post=lineto,
			post length=4pt
		}](14,-14) -- (14,-14-2);

		\node at (-2.5,-15-2) {-1};
		\node at (-1+4,-15-2) {$C_{cc}=$};

		\coordinate (x) at (0+4,-16-2);
		\coordinate (y) at (1+4, -14-2);
		\coordinate (z) at (2+4, -16-2);
		\draw [very thick] (x) to[in=-90, out=25] (y) to[in=180, out=-90] (z);
		
		\coordinate (u) at (3+4,-14-2);
		\coordinate (v) at (4+4, -16-2);
		\draw [very thick] (z) to[in=-90, out=0] (u) to[in=155, out=-90] (v);

		\node at (12.5,-15-2) {$C_{nt}=$};
		
		\coordinate (x) at (-0.5+14,2-16-2);
		\coordinate (y) at (0.5+14, 0.5-16-2);
		\coordinate (w) at (0+14, -1-16-2);
		\coordinate (z) at (0.5+14, -2-16-2);
		\draw [very thick] (x) to[in=90, out=-35] (y) to[in=90, out=-90] (w) to[in=135, out=-90] (z);

		\coordinate (x) at (0.5+14,2-16-2);
		\coordinate (y) at (-0.5+14, 0.5-16-2);
		\coordinate (z) at (-0.5+14, -2-16-2);
		\draw [very thick] (x) to[in=90, out=205] (y) to[in=90, out=-90] (w) to[in=35, out=-90] (z);

		\draw [red, very thick] plot [smooth cycle] coordinates {(5,-1.2) (2,-3) (4,-7) (-1.7,-12) (6, -15.3) (14.5,-20.5) (15, -16) (12, -12.5) (11,-7)};	
		\node [red] at (7,-2) {$\m B^{\irr}$};	
		
		\draw [blue, very thick] plot [smooth cycle] coordinates {(14,-9) (16,-14.5) (14,-21) (10.5,-15.3)};	
		\node [blue] at (14,-8.5) {$\M_2^{\irr}\setminus \M_2^{\ps}$};	
		
		\end{tikzpicture}
	\end{center}
	\caption{The strata of $\M_{2}^{\irr}$. 	A straight arrow $\to$ stands for \emph{degeneration}, while a  
	 zigzag arrow $\rightsquigarrow$ stands for \emph{isotrivial generation}. 
            The red line delimits the strata belonging to $\m B^{\irr}$ (see Remark \ref{R:Tstable-g2}) while the blue line delimits the strata belonging to $\M_2^{\irr}\setminus \M_2^{\ps}$.}
		\label{Fig-2}
\end{figure}
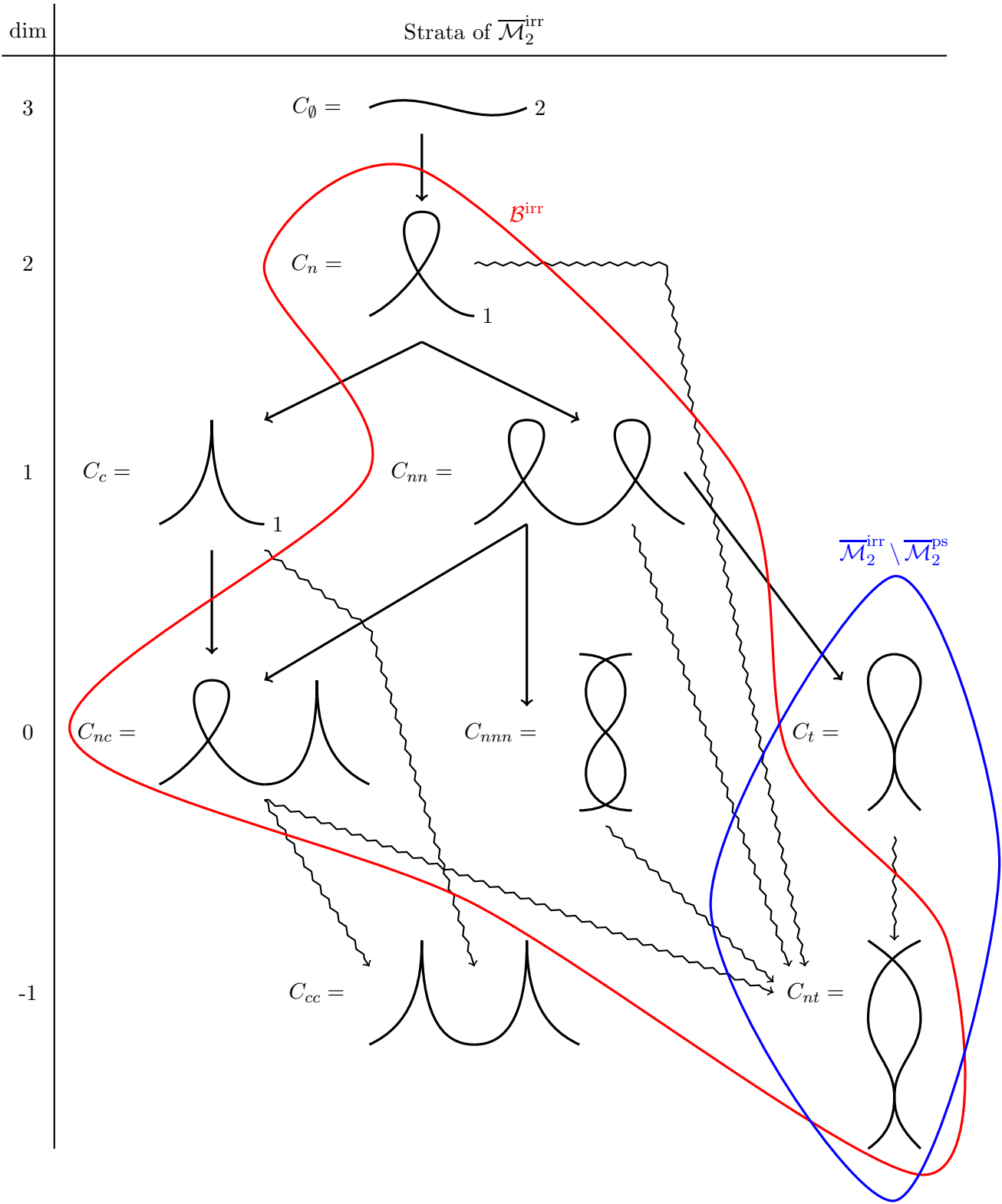

\begin{remark}\label{R:Tstable-g2}
Consider the locus $\m B^{\irr}\subset \M_2^{\irr}$  of curves of $\M_2^{\irr}$  containing an $A_1$/$A_1$-attached elliptic bridge, which is necessarily of type $\{\irr\}$ (see \cite[Def. 1.16]{CTV1}).
From the explicit description of all the points of $\M_2^{\irr}$ given in Remark \ref{R:Tclosed-g2}, it follows that $\m B^{\irr}$ is made of the curves of type $C_n$, $C_{nn}$, $C_{nc}$, $C_{nnn}$ and $C_{nt}$ (see also Figure \ref{Fig-2}). 
Hence, $\m B^{\irr}$ is not closed because it does not contain curves of type $C_c$, $C_{cc}$ or $C_t$, which are however obtained as specialisations of curves in $\m B^{\irr}$ (see Figure \ref{Fig-2}). 
Therefore, the locus  $\M_2^{\irr,+}:=\M_2^{\irr}\setminus \m B^{\irr}$ is not open in $\M_2^{\irr}$, which shows that the hypothesis $(g,n)\neq (2,0)$ is necessary in  \cite[Thm. 1.19]{CTV1}.
\end{remark}

We prove in \cite{CTV1} that the stack $\M_{g,n}^T$ admits a good moduli space $\MM_{g,n}^T$  provided that  the characteristic of the base field $k$ is big enough with respect to the pair $(g,n)$, written as $\car(k)\gg (g,n)$,  whose exact meaning is specified in \cite[Def. 2.1]{CTV1}.  

\begin{theorem}\label{T:goodsp}(\cite[Thm. 2.4, Thm. 4.1]{CTV1})
Let $(g,n)\neq (2,0)$ and fix a subset $T\subseteq T_{g,n}$. Assume  that $\car(k)\gg (g,n)$.
\begin{enumerate}
\item \label{T:goodsp1} The algebraic stack $\M_{g,n}^T$  admits a good moduli space $\MM_{g,n}^T$, which is a normal proper irreducible algebraic space over $k$. Moreover, there exists a commutative diagram 
\begin{equation}\label{E:diag-spaces}
\xymatrix{
\M_{g,n}^{\ps} \ar@{^{(}->}[r]^{\iota_T} \ar[d]^{\phi^{\ps}} & \M_{g,n}^T \ar[d]^{\phi^T} \\
\MM_{g,n}^{\ps} \ar[r]^{f_T}&   \MM_{g,n}^T   \\
}
\end{equation}
where the vertical maps are the natural morphisms to the good moduli spaces, the morphism $\iota_T$ is an open inclusion of stacks and the morphism $f_T$ is a projective   morphism. 
\item \label{T:goodsp2} If $\car(k)=0$ then $\MM_{g,n}^T$ is a projective variety and $f_T$ is the contraction of the $K$-negative face $F_T$ of the Mori cone of $\MM_{g,n}^{\ps}$, which is the convex hull of the elliptic bridge curves of type contained in $T$ (see Definition \ref{D:ell-bridg}).
\end{enumerate}
\end{theorem}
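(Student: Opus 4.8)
The plan is to deduce part (1) from the intrinsic existence criterion for good moduli spaces of Alper--Halpern-Leistner--Heinloth, and then, in characteristic zero, to bootstrap to part (2) by describing explicitly the curves contracted by $f_T$ and invoking the cone theorem. For part (1): $\M_{g,n}^T$ is a smooth algebraic stack of finite type with affine diagonal, so I would first check that every closed point has linearly reductive stabiliser. By Proposition~\ref{P:Tclosed} the closed points are the $T$-closed curves, and the automorphism group of such a curve is an extension of a finite group by a torus --- one $\Gm$-factor for each $A_1/A_1$-attached tacnodal elliptic bridge occurring in the $T$-canonical decomposition (acting as in \cite[Rmk.~1.4]{CTV1}), the pseudo-stable core $K$ contributing only finitely many automorphisms --- and the hypothesis $\car(k)\gg(g,n)$ makes these groups linearly reductive. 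The bulk of the work is then to verify that $\M_{g,n}^T$ is $\Theta$-reductive and S-complete and satisfies the existence part of the valuative criterion for properness. The last is easy (the limit in the proper stack $\M_{g,n}$, followed by $\wh\Upsilon$ and $\iota_T$, already gives a $T$-semistable limit); the first two amount to a semistable-reduction statement: given a family of $T$-semistable curves over a punctured trait, together with a one-parameter degeneration of its special fiber, one must fill it in over the missing point, the new special fiber being the $T$-closed curve of the relevant isotrivial specialisation class --- unique up to the torus actions. I would prove this by a local study of the versal deformations of nodes, cusps and tacnodes (replacing elliptic tails by cusps and contracting destabilising subcurves to tacnodes), organised by the combinatorics of Proposition~\ref{P:Tclosed}; the exclusion $(g,n)\neq(2,0)$ is forced here because there the destabilising locus $\m B^{\irr}$ of Remark~\ref{R:Tstable-g2} is not even closed. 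The existence criterion then produces a good moduli space $\phi^T\colon\M_{g,n}^T\to\MM_{g,n}^T$ with $\MM_{g,n}^T$ proper, normal (as $\M_{g,n}^T$ is smooth) and irreducible (as $M_{g,n}$ is a dense open substack).

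Next, the diagram and the projectivity of $f_T$. A pseudo-stable curve is $T$-semistable for every $T$, so $\iota_T$ is an open immersion, and the universal property of the coarse space $\phi^{\ps}$ yields a unique $f_T$ making \eqref{E:diag-spaces} commute. To get projectivity of $f_T$ it suffices to exhibit an $f_T$-ample $\bbQ$-line bundle on $\MM_{g,n}^{\ps}$: by Alper's descent lemma a line bundle on $\M_{g,n}^T$ descends to $\MM_{g,n}^T$ precisely when it restricts trivially to the stabiliser of every closed point, so I would pick a combination $N_T$ of $\lambda$ and of the classes $\delta_{i,I}$ with $[i,I]$ outside $T$ that kills the $\Gm$-weights of the tacnodal elliptic bridges; the intersection formulas of Lemma~\ref{L:int-C} then show that $N_T$, viewed as a class on $\MM_{g,n}^{\ps}$, is strictly positive on every curve not contracted by $f_T$ and zero on those that are, i.e. $f_T$-ample.

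For part (2), with $\car(k)=0$: projectivity of $\MM_{g,n}^T$ follows from projectivity of $\MM_{g,n}^{\ps}$ and of $f_T$. To identify $f_T$ with the contraction of $F_T$, I would observe that an elliptic bridge curve of type in $T$ --- $C(\irr)=\Upsilon(F_s([1,\emptyset]))$ or $C([\tau,I],[\tau+1,I])=\Upsilon(F([\tau,I],[g-\tau-1,I^c]))$ --- is a one-parameter family of pseudo-stable curves in which a varying elliptic bridge is glued $A_1/A_1$ to a fixed complement, and all of its members become S-equivalent inside $\M_{g,n}^T$ (their common closed point being obtained by replacing the elliptic bridge with the tacnodal one), so $f_T$ contracts every such curve and hence the convex hull $F_T$ of their classes. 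Conversely, $f_T$ is an isomorphism away from the locus of pseudo-stable curves carrying an $A_1/A_1$-attached elliptic bridge of type in $T$, and a curve contracted by $f_T$ lies in one of its (connected) fibers, which are covered by the one-parameter families in which the tacnodal elliptic bridges vary; hence every contracted curve has numerical class a non-negative combination of elliptic bridge curves of type in $T$, so $f_T$ contracts exactly the curves with class in $F_T$. Since $f_T$ is projective, $N_T=f_T^{*}A$ (for $A$ ample on $\MM_{g,n}^T$) is then a nef class on $\MM_{g,n}^{\ps}$ whose orthogonal subcone of $\NEb(\MM_{g,n}^{\ps})$ is exactly $F_T$, so $F_T$ is an extremal face --- the convex hull of the elliptic bridge curves of type in $T$ --- and $f_T$ is its associated contraction. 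Finally $F_T$ is $K_{\MM_{g,n}^{\ps}}$-negative: expressing $K_{\MM_{g,n}^{\ps}}$ in the tautological classes and applying Lemma~\ref{L:int-C} gives $K_{\MM_{g,n}^{\ps}}\cdot C<0$ on every elliptic bridge curve $C$.

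The main obstacle will be the second step of part (1), the verification of $\Theta$-reductivity and S-completeness: this is exactly where the geometry of tacnodal degenerations has to be mastered --- one must show that the destabilising modifications always exist, terminate in a $T$-closed curve, and do so uniquely up to the torus actions, and one must keep control of the characteristic throughout. By contrast, everything else is formal (the cone theorem, Alper's descent, the universal property of coarse spaces) or a bounded intersection-theoretic computation.
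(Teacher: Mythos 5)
A preliminary calibration: this theorem is not proved in the present paper at all --- it is quoted from \cite[Thm. 2.4, Thm. 4.1]{CTV1} --- so your sketch has to be measured against the strategy of that source rather than against an argument in this text. At the level of architecture your outline is the right one and close to what is done there: existence of the good moduli space via the intrinsic existence criterion (linearly reductive stabilisers of the $T$-closed curves of Proposition \ref{P:Tclosed}, with $\car(k)\gg(g,n)$ ensuring reductivity of the finite parts, plus the $\Theta$-reductivity/S-completeness verification by local deformation-theoretic analysis, which is indeed where the real work and the exclusion $(g,n)\neq(2,0)$ live), descent of line bundles via Alper's criterion (the same mechanism redone in this paper as Proposition \ref{P:descend} and Lemma \ref{L:weight}), and, in characteristic zero, the cone theorem together with the identification of the contracted curves with the elliptic bridge curves and the $K$-negativity computation via Lemma \ref{L:int-C}, which is fine.

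Two of your steps, however, are genuinely wrong or gappy. First, the projectivity argument for $f_T$ is backwards: a class $N_T$ chosen to kill the $\Gm$-weights of the tacnodal bridges is precisely a $T$-compatible class, hence it descends to $\MM_{g,n}^T$ and is numerically trivial on every curve contracted by $f_T$; such a class can never be $f_T$-ample, because relative ampleness requires positivity on the contracted curves, not on the complementary ones (and in any case ``nef and positive on all non-contracted curves'' does not yield ampleness of the descended class --- positivity on curves is strictly weaker than ampleness). The economical fix is that $\MM_{g,n}^{\ps}$ is projective, so any ample divisor on it is automatically $f_T$-ample, and $f_T$ is projective once it is proper. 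Second, in part (2) the converse inclusion --- that every curve contracted by $f_T$ has numerical class in the convex hull $F_T$ of the elliptic bridge curves of type contained in $T$ --- is the substantive point, and you only assert it (``the fibers are covered by the one-parameter families in which the tacnodal bridges vary''); in \cite{CTV1} this rests on a dedicated analysis of the fibres and of the curve classes they carry (the statements used in this paper as \cite[Prop. 4.2]{CTV1} and \cite[Lemma 3.12]{CTV1}), and without it neither the extremality of $F_T$ nor the identification of $f_T$ with its contraction follows. A smaller imprecision: your existence-of-limits step tacitly assumes the generic fibre of the test family is stable, whereas a $T$-semistable generic fibre may have cusps and tacnodes; one needs the standard reduction of the valuative criterion to families whose generic fibre lies in the dense open locus of smooth curves before applying stable reduction and $\wh\Upsilon$.
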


The above Theorem is false for $(g,n)=(2,0)$ and $T^{\adm}=\{\irr\}$, as we now indicate.

\begin{remark}\label{R:mod-g2}
From Remark \ref{R:Tclosed-g2},  it follows  that  the curve $C_{nc}$ can isotrivially specialise to the two distinct closed points  $C_{cc}$ and $C_{nt}$ (see also Figure \ref{Fig-2}). This implies that the stack $\M_2^{\irr}$ is not weakly separated  in the sense of \cite[Sec. 2]{ASV} and also that if a good moduli space for $\M_2^{\irr}$ exists (and we do not know if that is the case or not) then it will not be separated.
\end{remark}

In the remaining of this section, we study several geometric properties of the space $\MM_{g,n}^T$ and of the morphism $f_T:\MM_{g,n}^{\ps}\to \MM_{g,n}^T$. 

Let us start by describing the rational Picard group of $\MM_{g,n}^T$. The pull-back along the good moduli morphism $\phi^T:\M_{g,n}^T\to \MM_{g,n}^T$ induces an inclusion
\begin{equation}\label{E:pullphiT}
(\phi^T)^*:\Pic(\MM_{g,n}^T)_{\bbQ}\hookrightarrow \Pic(\M_{g,n}^T)_{\bbQ}. 
\end{equation}
Since the open inclusion $\M_{g,n}^{\ps}\subseteq \M_{g,n}^T$  has complement of codimension at least two and $\M_{g,n}^T$ is smooth,  the restriction map induces an isomorphism
\begin{equation}\label{E:PicMgT}
\Pic(\M_{g,n}^T)_{\bbQ}\xrightarrow{\cong} \Pic(\M_{g,n}^{\ps})_{\bbQ}.
\end{equation}
This implies, using Proposition \ref{P:PicMps}, that the rational Picard group $\Pic(\M_{g,n}^{T})_{\bbQ}$ is generated by the tautological line bundles 
$\{\lambda, \delta_{\irr}, \{\delta_{i,I}\}_{[i,I]\in T_{g,n}^*\setminus \{[1,\emptyset]\}}\}$.
We will now  characterise which $\bbQ$-line bundles on $\M_{g,n}^T$ belong to the image of the inclusion \eqref{E:pullphiT}. 

\begin{definition}\label{D:Tcomp}\cite[Def. 4.3]{CTV1}
A $\bbQ$-line bundle 
$$L=a\lambda+b_{\irr}\delta_{\irr}+\sum_{[i,I]\in T_{g,n}-\{[1,\emptyset], \irr\}} b_{i,I} \delta_{i,I} \in \Pic(\M_{g,n}^{T})_{\bbQ}$$
is $T$-compatible if and only if
\begin{equation}\label{E:equaT}
\begin{sis}
a+10 b_{\irr}=0 & \: \text{ if } \irr \in T, \\
a+12 b_{\irr}-b_{\tau,I}-b_{\tau+1,I}=0 & \: \text{ for any } \{[\tau, I], [\tau+1, I]\}\subset T.
\end{sis}
\end{equation}
\end{definition}
From Lemma \ref{L:int-C}, it follows that a $\bbQ$-line bundle on $\MM_{g,n}^T$ is $T$-compatible if and only if it has zero intersection with all the elliptic bridge curves of type contained in $T$ (see Definition \ref{D:ell-bridg}).

\begin{proposition}\label{P:descend}
Assume that $(g,n)\neq (2,0)$ and $\car(k)\gg (g,n)$.
A $\bbQ$-line bundle $L$ on $\M_{g,n}^T$ descends to a (necessarily unique) $\bbQ$-line bundle on $\MM_{g,n}^T$ (which we will denote by $L^T$) if and only if $L$ is $T$-compatible.
\end{proposition}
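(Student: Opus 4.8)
The plan is to use the standard descent criterion for good moduli spaces, namely that a line bundle on a stack with a good moduli space $\phi^T:\M_{g,n}^T\to \MM_{g,n}^T$ descends if and only if for every closed point $x$ of $\M_{g,n}^T$ with stabilizer $G_x$, the induced action of $G_x$ on the fiber of $L$ at $x$ is trivial (this is Alper's descent lemma). So the strategy is: first identify the closed points with nontrivial stabilizers and compute the relevant characters, then match the vanishing of these characters with the $T$-compatibility equations \eqref{E:equaT}.

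\textbf{Reduction to the tacnodal elliptic bridges.} By Proposition \ref{P:Tclosed}, a closed point of $\M_{g,n}^T$ is a $T$-closed curve $(C,\{p_i\})=K\cup (E_1,q_1^1,q_2^1)\cup\cdots\cup(E_r,q_1^r,q_2^r)$ where the $E_j$ are $A_1/A_1$-attached tacnodal elliptic bridges of type contained in $T$, and $K$ has no such bridges. The automorphism group of such a curve contains a torus $\Gm^r$, one factor for each tacnodal elliptic bridge $E_j$ (using the $\Gm$-action on an open rosary of length $2$ from \cite[Rmk. 1.4]{CTV1}). Since $L$ descends iff all characters of the stabilizer of every closed point act trivially on $L_x$, and since $K$ itself is pseudostable, the only new conditions beyond those already guaranteeing descent to $\MM_{g,n}^{\ps}$ (which is automatic: $\delta_{1,\emptyset}=0$ on $\M_{g,n}^{\ps}$ by construction, and every line bundle on $\M_{g,n}^{\ps}$ descends to $\MM_{g,n}^{\ps}$ by Proposition \ref{P:PicMps}\eqref{P:PicMps2}) come from the $\Gm$-factor attached to each $E_j$.

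\textbf{Computing the character on one tacnodal elliptic bridge.} For each type $\theta\in\{\irr\}\cup\{\{[\tau,I],[\tau+1,I]\}\}$ that occurs among the $E_j$, I would compute the weight with which the corresponding $\Gm$ acts on the fibers of $\lambda$, $\delta_{\irr}$, and the $\delta_{i,I}$ at the $T$-closed point. The key observation is that this weight computation is exactly dual to the intersection numbers in Lemma \ref{L:int-C}: a one-parameter degeneration in $\M_{g,n}^{\ps}$ whose limit acquires a tacnodal elliptic bridge of type $\theta$ is, up to the map $\iota_T$, the same data as the elliptic bridge curve $C(\irr)$ or $C([\tau,I],[\tau+1,I])$ from Definition \ref{D:ell-bridg}, and the weight of the $\Gm$-action on $L_x$ equals (up to sign) the degree of $L$ on that test curve. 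Concretely, for type $\{\irr\}$ the weight is $a+10b_{\irr}$ and for type $\{[\tau,I],[\tau+1,I]\}$ it is $a+12b_{\irr}-b_{\tau,I}-b_{\tau+1,I}$, matching Definition \ref{D:Tcomp}. The remark immediately following Definition \ref{D:Tcomp} already records that $T$-compatibility is equivalent to having zero intersection with all elliptic bridge curves of type contained in $T$, so this is the bridge between the two formulations.

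\textbf{Assembling the proof.} ($\Leftarrow$) If $L$ is $T$-compatible, then for every $T$-closed point and every $\Gm$-factor of its stabilizer, the weight on $L_x$ vanishes by the computation above; any finite part of the stabilizer acts through a quotient that already fixes the pseudostable line bundles, so $L_x$ is fixed by the full stabilizer and $L$ descends by Alper's criterion. ($\Rightarrow$) Conversely, if $L$ descends, then its restriction to any test curve which is contracted by $f_T$ — in particular any elliptic bridge curve of type contained in $T$, which is contracted by Theorem \ref{T:goodsp}\eqref{T:goodsp2} — must have degree zero, forcing the equations \eqref{E:equaT}. Uniqueness of the descended bundle is automatic since $(\phi^T)^*$ is injective by \eqref{E:pullphiT}. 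The main obstacle I anticipate is making the weight computation on the tacnodal elliptic bridge rigorous and correctly normalized — in particular checking that the $\Gm$-action from \cite[Rmk. 1.4]{CTV1} produces precisely the coefficients $10$ and $12$, and that no other closed points (e.g.\ those where $K$ has extra automorphisms, or the pathological $(g,n)=(2,0)$ case which is excluded by hypothesis) contribute further constraints. This is essentially the content of \cite[Def. 4.3]{CTV1} and Lemma \ref{L:int-C}, so the computation should reduce cleanly to citing those.
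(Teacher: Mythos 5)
Your skeleton coincides with the paper's: reduce via Alper's descent criterion and Proposition \ref{P:Tclosed} to the $\Gm$-factors of the stabilizer attached to the tacnodal elliptic bridges of a $T$-closed curve, compute the weight of each such $\Gm$ on the fiber of $L$, and match the vanishing of these weights with \eqref{E:equaT}. The genuine gap is in the step you yourself flag as the ``main obstacle'': you never actually compute the weights, and the shortcut you propose --- that the weight of $\rho_E$ on $L_x$ equals (up to sign) the degree of $L$ on the elliptic bridge curve $C(\irr)$ or $C([\tau,I],[\tau+1,I])$ of Lemma \ref{L:int-C} --- is not a statement you can cite. Those elliptic bridge curves are honest moving curves in the good moduli space $\MM_{g,n}^{\ps}$ (images of F-curves), not isotrivial $\Gm$-families through the $T$-closed point, and neither \cite[Def.~4.3]{CTV1} (a definition) nor Lemma \ref{L:int-C} (an intersection computation on $\MM_{g,n}^{\ps}$) identifies degrees on them with characters of the stabilizer at a tacnodal closed point of $\M_{g,n}^T$. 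The numerical agreement ($a+10b_{\irr}$, resp.\ $a+12b_{\irr}-b_{\tau,I}-b_{\tau+1,I}$) is a posteriori; the paper establishes it by an independent deformation-theoretic computation (Lemma \ref{L:weight}, adapting \cite{AFS0}): the weight of $\lambda$ is $1$, the weights of the $\psi$-classes come from the explicit $\Gm$-action on the rosary, and the weights of the boundary classes come from the $\Gm$-action on the versal deformations of the tacnode (the discriminant of $x^4+a_2x^2+a_1x+a_0$ has weight $-12$) and of the attaching nodes, combined with \cite[Lemma 3.11]{AFS0}. Without this (or an actual proof of your claimed duality), the ``if'' direction --- the substantive half of the proposition --- is not proved.

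A secondary point: your ``only if'' direction invokes Theorem \ref{T:goodsp}\eqref{T:goodsp2}, which is only available in characteristic zero, whereas the proposition is stated for $\car(k)\gg (g,n)$, hence in large positive characteristic as well; the paper deliberately avoids the Cone Theorem here and instead exhibits explicit $T$-closed curves $D([\tau,I],[\tau+1,I])$ containing a tacnodal bridge of the prescribed type and reuses Lemma \ref{L:weight}. Your argument can be repaired in this direction by citing \cite[Prop.~4.2]{CTV1} (valid under $\car(k)\gg(g,n)$) for the fact that the elliptic bridge curves of type contained in $T$ are contracted by $f_T$, together with the identification $\Pic(\M_{g,n}^T)_{\bbQ}\cong\Pic(\MM_{g,n}^{\ps})_{\bbQ}$ and the projection formula; but the missing weight computation in the ``if'' direction remains the essential gap. (Your handling of the finite part of the stabilizer is fine once one passes to a multiple of $L$, as the paper does.)
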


In characteristic zero, the above result follows from the Cone Theorem \cite[3.7 (4)]{KM}: since $f_T:\MM_{g,n}^{\ps}\to \MM_{g,n}^T$ is the contraction of the $K$-negative face $F_T$ of the Mori cone of $\MM_{g,n}^{\ps}$ (by Theorem \ref{T:goodsp}\eqref{T:goodsp2}), a $\bbQ$-line bundle on $\MM_{g,n}^{\ps}$  descends to a (necessarily unique) $\bbQ$-line bundle on $\MM_{g,n}^T$ if and only if it lies on $F_T^{\perp}$, i.e. if and only if it has zero intersection with all the elliptic bridge curves of type contained in $T$ (see  \cite[Cor. 4.4(i)]{CTV1}).

\begin{proof}
Up to passing to a multiple, it is enough to prove the statement for a line bundle on $\M_{g,n}^T$. Given such a line bundle $L$ on $\M_{g,n}^T$ and any one-parameter subgroup $\rho:\Gm\to \Aut(C,\{p_i\})$ for some $k$-point $(C,\{p_i\})\in \M_{g,n}^T(k)$, the group $\Gm$ will act  via $\rho$ onto the fiber $L_{(C, \{p_i\})}$ of the line bundle over $(C,\{p_i\})$ and we will denote by 
$\langle L, \rho\rangle\in \bbZ$ the weight of this action. According to \cite[Theorem 10.3]{Alper}, the line bundle $L$ descends to a $\bbQ$-line bundle on $\MM_{g,n}^T$ if and only if $\langle L, \rho\rangle=0$ for any 
one-parameter subgroup $\rho:\Gm\to \Aut(C,\{p_i\})$ of any closed $k$-point $(C,\{p_i\})\in \M_{g,n}^T(k)$. We will now show that this is the case if and only if $L$ is $T$-compatible.

To prove the if implication, assume that $L$ is $T$-compatible and fix a closed $k$-point $(C,\{p_i\})$ of $\M_{g,n}^T(k)$. By Proposition \ref{P:Tclosed}, $(C, \{p_i\})$ is $T$-closed, i.e. it admits a $T$-canonical decomposition $C=K \cup (E_1,q_1^1,q_2^1) \cup \cdots \cup (E_r,q^r_1,q^r_2)$, where $ (E_1,q_1^1,q_2^1), \ldots, (E_r,q^r_1,q^r_2)$ are $A_1$/$A_1$-attached tacnodal elliptic bridges of type contained in $T$ and  $K$ does not contain tacnodes nor $A_1$/$A_1$-attached  elliptic bridges of type contained in $T$.
 By \cite[Rmk. 1.4]{CTV1}, the connected component containing the identity of the automorphism group of  $(C, \{p_i\})$ is equal to  
 $$\Aut((C, \{p_i\}))^o=\prod_{i=1}^r \Aut((E_i,q_1^i,q_2^i))^o\cong \prod_{i=1}^r\Gm.$$  
This implies that any one-parameter subgroup of $\Aut((C, \{p_i\})$ is a linear combination of the $r$ one-parameter subgroups 
$$\rho_{E_i}:\Gm\stackrel{\cong}{\longrightarrow}\Aut((E_i,q_1^i,q_2^i))^o\subset \Aut((C, \{p_i\})).$$
 The weights $\langle L, \rho_{E_i}\rangle$ are computed in the Lemma \ref{L:weight} below. Since $\type(E_i,q_1^i,q_2^i)\subseteq T$ and  $L$ is $T$-compatible by assumption, then $\langle L, \rho_{E_i}\rangle=0$, which implies that  $\langle L, \rho\rangle=0$ for any one-parameter subgroup of $\Aut((C, \{p_i\}))$. Since this is true for any closed point $(C,\{p_i\})$ of $\M_{g,n}^T(k)$, we deduce that  $L$  descends to a $\bbQ$-line bundle on  $\MM_{g,n}^T$. 
 
 In order to prove the reverse implication, we can assume that $T$ is admissible by Proposition \ref{P:equaT} and the observation that a $\bbQ$-line bundle is $T$-compatible if and only if it is $T^{\adm}$-compatible.  Assume that $L=a\lambda+b_{\irr}\delta_{\irr}+\sum_{[i,I]\in T_{g,n}-\{[1,\emptyset], \irr\}} b_{i,I} \delta_{i,I} $  descends to a $\bbQ$-line bundle on $\MM_{g,n}^T$. For any pair $\{[\tau,I],[\tau+1,I]\}$ contained in $T$, 
 let $(D([\tau,I],[\tau+1,I]),\{p_i\})$ be the $n$-pointed curve which is  the stabilisation of the $n$-pointed curve obtained by gluing nodally a tacnodal elliptic bridge $(E,q_1,q_2)$ with a smooth curve $C_1$ of genus $\tau$ in $q_1$ and a smooth curve $C_2$ of genus $g-\tau-1$  in $q_2$ and putting the marked points $\{p_i\}_{i\in I}$ in $C_1$ and the marked points $\{p_i\}_{i\in I^c}$ in $C_2$. The curve $(D([\tau,I],[\tau+1,I]],\{p_i\})$ is $T$-closed and hence it is a closed $k$-point of $\M_{g,n}^T$ by Proposition \ref{P:Tclosed}; moreover, it has an $A_1$/$A_1$-attached elliptic tacnodal bridge $(E,q_1,q_2)$ of type $\{[\tau,I],[\tau+1,I]\}$. Since $L$ descends  to a $\bbQ$-line bundle on $\MM_{g,n}^T$ we have that $\langle L, \rho_{E}\rangle=0$ for the one-parameter subgroup $\rho_E:\Gm\stackrel{\cong}{\longrightarrow}\Aut((E,q_1,q_2))^o\subset \Aut((D([\tau,I],[\tau+1,I]),\{p_i\}))$, which translates into the equality $a+12 b_{\irr}-b_{\tau,I}-b_{\tau+1,I}=0$ by Lemma \ref{L:weight}. A similar argument can be applied to $\{\irr\}$ whenever $\irr \in T$ and it gives the equality $ a+10 b_{\irr} =0$. Hence we conclude that $L$ is $T$-compatible.
\end{proof}

\begin{lemma}\label{L:weight}
Assume that $\car(k)\neq 2$. 
Let $L=a\lambda+b_{\irr}\delta_{\irr}+\sum_{[i,I]\in T_{g,n}-\{[1,\emptyset], \irr\}} b_{i,I} \delta_{i,I} $ be a line bundle on $\M_{g,n}^T$. Let $(E,q_1, q_2)$ be an $A_1$/$A_1$-attached tacnodal elliptic bridge of a curve 
 $(C,\{p_i\})\in \M_{g,n}^T(k)$ and consider the one-parameter subgroup $\rho_E:\Gm\stackrel{\cong}{\longrightarrow}\Aut((E,q_1,q_2))^o\subset \Aut((C, \{p_i\}))$. Then we have
 $$\langle L,  \rho_E\rangle=
 \begin{cases}
  a+10 b_{\irr} & \text{ if }\type(E,q_1,q_2)=\{\irr\}, \\
a+12 b_{\irr}-b_{\tau,I}-b_{\tau+1,I} & \text{if }\type(E,q_1,q_2)=\{[\tau,I],[\tau+1,I]\}.
 \end{cases}
 $$
\end{lemma}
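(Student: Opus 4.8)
The plan is to compute the $\Gm$-weight $\langle L, \rho_E\rangle$ by reducing everything to a local computation on the tacnodal elliptic bridge and using the known structure of the $\Gm$-action on it. Recall from \cite[Def. 1.3, Rmk. 1.4]{CTV1} that a tacnodal elliptic bridge $(E,q_1,q_2)$ is an open rosary of length $2$ carrying an explicit $\Gm$-action: it is the union of two smooth rational components $R_1 \ni q_1$ and $R_2 \ni q_2$ meeting at a tacnode, and $\Gm$ acts on $R_i$ linearizing the picture so that the weights at the various special points (the marked/attaching points $q_1,q_2$ and the tacnode) are explicit small integers. The key point is that $\langle \cdot, \rho_E\rangle$ is additive in $L$, so it suffices to compute $\langle \lambda, \rho_E\rangle$, $\langle \delta_{\irr},\rho_E\rangle$ and $\langle \delta_{j,J}, \rho_E\rangle$ for each boundary generator separately, and then assemble the answer according to the two cases for $\type(E,q_1,q_2)$.

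First I would compute $\langle \lambda, \rho_E\rangle$. Since $\lambda = \det \pi_* \omega$ and the curve $C$ decomposes (under the normalization at the relevant nodes/tacnode) into $E$ and a fixed complement on which $\Gm$ acts trivially, only the $H^0$ of the dualizing sheaf restricted to $E$ contributes; the weight of $\Gm$ on the one-dimensional space of abelian differentials on the genus-$1$ curve $E$ (pushed into $C$) is computed directly from the explicit $\Gm$-action on the rosary, giving the coefficient $1$ of $a$ in both cases. Next I would compute the weights of the boundary divisors. Here $\langle \delta_D, \rho_E\rangle$ is the weight of $\Gm$ on the normal direction to (i.e. on the smoothing parameter of) each node or singular point of $C$ that lies on the divisor $D$, summed over such points, and taken with the appropriate sign; equivalently it is the order of vanishing of a local defining section twisted by the $\Gm$-action. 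The tacnode of $E$ itself contributes to $\delta_{\irr}$ (giving a coefficient like $12$) because smoothing the tacnode of an $A_3$-attached genus-one piece produces a non-separating node degeneration, while the two attaching nodes $\gamma(q_1),\gamma(q_2)$ contribute to the boundary divisors recording their topological type: for $\type(E,q_1,q_2)=\{\irr\}$ these also feed into $\delta_{\irr}$ (adjusting the total to $a+10b_{\irr}$), and for $\type(E,q_1,q_2)=\{[\tau,I],[\tau+1,I]\}$ they feed into $\delta_{\tau,I}$ and $\delta_{\tau+1,I}$ respectively, contributing $-b_{\tau,I}-b_{\tau+1,I}$ and leaving the tacnode-contribution $12 b_{\irr}$; the signs are dictated by the convention that $\delta_{j,J}=\cO(\Delta_{j,J})$ so smoothing that node gives weight $+1$ in the direction of the divisor. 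This matches precisely the two numbers in the statement, and these are also exactly the intersection numbers appearing in Lemma \ref{L:int-C}, which is the expected consistency check (the weight of $L$ along $\rho_E$ equals $L$ intersected with the corresponding elliptic bridge curve obtained by moving the $j$-invariant of $E$).

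The main obstacle I expect is the careful bookkeeping of the $\Gm$-action on the open rosary of length $2$ and the identification of which local parameters it acts on with which weights — in particular getting the coefficient $10$ versus $12$ right in the $\{\irr\}$ case, which comes from the fact that in that case the complement $\ov{C\setminus\gamma(E)}$ is attached at two nodes so there is an extra contribution (the two attaching nodes are themselves smoothed within $\delta_{\irr}$, contributing $-2$ to the naive $12$), whereas in the split case the attaching nodes contribute to the separating boundary divisors instead. A clean way to handle this uniformly is to work with the universal deformation of $(C,\{p_i\})$ à la \cite[Rmk. 1.4]{CTV1}: the tangent space to deformation space decomposes into the smoothing parameters of the singularities plus equisingular deformations, $\Gm$ acts diagonally, and one reads off each $\langle \delta_D,\rho_E\rangle$ as the sum of weights on the smoothing parameters of the singularities lying on $D$, while $\langle\lambda,\rho_E\rangle$ is read off from the Hodge bundle. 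I would assemble these pieces, verify the arithmetic in each of the two cases, and cross-check against Lemma \ref{L:int-C} to confirm the formula.
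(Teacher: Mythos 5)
Your strategy is the same as the paper's: use linearity of $\langle -,\rho_E\rangle$, compute the weights of $\lambda$ and of the boundary classes from the explicit $\Gm$-action on the length-two open rosary of \cite[Rmk.\ 1.4]{CTV1}, read the boundary weights off the semiuniversal deformation space of $(C,\{p_i\})$ (this is exactly the paper's route, via \cite[Lemma 3.11]{AFS0}), and then sort the contributions of the singular points of $C$ lying on $E$ into the correct boundary divisors according to $\type(E,q_1,q_2)$. Your bookkeeping of which point feeds into which divisor, and the resulting totals $a+10b_{\irr}$ versus $a+12b_{\irr}-b_{\tau,I}-b_{\tau+1,I}$, agree with the paper.

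However, as written the argument has concrete gaps precisely at the points you yourself flag as ``the main obstacle''. First, the crucial input --- that the tacnode contributes $+12$ to $\langle\delta_{\irr},\rho_E\rangle$ --- is never established (``a coefficient like $12$''), and your proposed clean recipe, reading $\langle\delta_D,\rho_E\rangle$ as the \emph{sum of weights on the smoothing parameters} of the singularities on $D$, is actually wrong for the tacnode: its versal deformation $y^2=x^4+a_2x^2+a_1x+a_0$ has parameters of weights $-2,-3,-4$ (sum $-9$), whereas the locus of $\delta_{\irr}$ there is cut out by the \emph{discriminant}, which is weighted-homogeneous of weight $-12$; it is the weight of this local defining equation (your earlier, correct formulation), not a sum of coordinate weights, that yields $12$. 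The weight $\langle\lambda,\rho_E\rangle=1$ is likewise only asserted (the paper imports it from \cite{AFS0}). Second, your accounting assumes both attaching points $\gamma(q_1),\gamma(q_2)$ are nodes, but Definition \ref{D:ell} allows them to be marked points; when $\type(E,q_1,q_2)=\{[0,\{i\}],[1,\{i\}]\}$ the term $-b_{0,\{i\}}$ does not come from any node-smoothing parameter but from the $\Gm$-weight $+1$ on the cotangent line $\psi_i=-\delta_{0,\{i\}}$ at $p_i$ (and when neither attaching point is a node, as for $(g,n)=(1,2)$, there is no node contribution at all). This case needs the separate $\psi$-weight computation carried out in the paper, and it is also where the hypothesis $\car(k)\neq 2$ is used in writing the versal deformation of the tacnode.
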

\begin{proof}
Since the weight is linear in $L$, the result will follow from the following identities:
\begin{equation}\label{E:weight1}
\begin{sis}
& \langle \lambda, \rho_E \rangle=1,\\
& \langle \delta_{\irr}, \rho_E\rangle=
\begin{cases}
  10 & \text{ if }\type(E,q_1,q_2)=\{\irr\}, \\
 12  & \text{if }\type(E,q_1,q_2)\neq \{\irr\},
\end{cases}\\
& \langle \delta_{i,I},\rho_E\rangle =
\begin{cases}
  -1 & \text{ if }[i,I]\in \type(E,q_1,q_2), \\
 0  & \text{if }[i,I]\not\in \type(E,q_1,q_2).
\end{cases}\\
\end{sis}
\end{equation}
The above identities can be proved by adapting the computations in \cite{AFS0}, as we now explain.

First of all, by combining \cite[Cor. 3.3]{AFS0} and the computations in \cite[Sec. 3.1.3]{AFS0} for $A_3$, we deduce that  
\begin{equation}\label{E:weightlam}
\langle \lambda, \rho_E \rangle=1.
\end{equation}

Second, in order to compute the weights of the $\psi$ classes, recall that the fiber of $\psi_i$ over a pointed curve $(C,\{p_i\}) $ is canonically isomorphic to the cotangent vector space $T_{p_i}(C)^{\vee}$. Hence, $\langle \psi_i, \rho_E\rangle$ is the weight of the action of $\Gm$, via the one-parameter subgroup $\rho_E$,  on the $1$-dimensional $k$-vector space $T_{p_i}(C)^{\vee}$. Since the action of $\Gm$ is trivial outside $E$, the weight of $\Gm$ on $T_{p_i}(C)^{\vee}$ can be non-zero only if $p_i$ belongs to $E$, in which case $p_i$ must coincide with either $q_1$ or $q_2$ and the type of $E$ must be $\{[0,\{i\}],[1,\{i\}]\}$. Moreover, if this happens, then by the explicit action of $\Gm$ on $(E,q_1,q_2)$ given in \cite[Rmk. 1.4]{CTV1}, it follows that $\langle \psi_i, \rho_E\rangle=1$. Summing up, we get that 
\begin{equation}\label{E:psiweight}
\langle \delta_{0,\{i\}}, \rho_E\rangle=-\langle \psi_i, \rho_E\rangle=
\begin{cases}
  -1 & \text{ if }[0,\{i\}]\in \type(E,q_1,q_2), \\
 0  & \text{if }[0,\{i\}]\not\in \type(E,q_1,q_2).
\end{cases}
\end{equation}

Finally, in order to compute the weights of the boundary line bundles, we will adapt the computations of \cite[Sec. 3.2.2]{AFS0}. 
Consider the (formal) semiuniversal deformation space $\Def(C,\{p_i\})$ of the $n$-pointed curve $(C,\{p_i\})$. 

Any boundary divisor $\cD$ on $\M_{g,n}^T$ restricts to a $\Gm$-invariant Cartier divisor  on $\Def(C,\{p_i\})$ given by an equation of the form  $\{f=0\}$. The $\Gm$-weight  of $f$ is equal to $-\langle \cO(\cD), \rho_E\rangle$ according to \cite[Lemma 3.11]{AFS0}. Now, since the action of $\Gm$ is trivial outside $E$, the only contributions to the weights of the boundary divisors come from the singular points lying in $E$, i.e. the tacnode $p$  of $E$ and, possibly, the two points $q_1$ and $q_2$ if they are nodes. 

In order to compute these contributions, consider the formally smooth morphism 
$$\Phi:\Def(C,\{p_i\})\longrightarrow \Def(\wh{O}_{C, p})\times \prod_{q_i \: \text{node}}\Def(\wh{O}_{C,q_i}),$$
into the product of the (formal) semiuniversal deformation spaces of the tacnode $p$ and of  nodes belonging to $\{q_1, q_2\}$. The group $\Aut(E, q_1, q_2)^o\cong \Gm$ acts on the above deformation spaces in such a way that the morphism $\Phi$ is equivariant.

Let us now write down explicitly the deformation spaces of the above singularities together with the action of $\Gm$, using the equation given in \cite[Rmk. 1.4]{CTV1}.

The  semiuniversal deformation space of $q_i$ (for $i=1,2$), whenever it is a node, is equal to $\Spf k[b_i]$ and the semiuniversal deformation family is  $w_i z_i=b_i$ where $w_i$ (resp. $z_i$) is a local coordinate on the branch of the node $q_i$ not belonging to $E$ (resp. belonging to $E$). Since the action on $\Gm$ on the local coordinates are $t\cdot (w_i)=(w_i)$ and $t\cdot (z_i)=(tz_i)$  by \cite[Rmk. 1.4]{CTV1},  the action of $\Gm$ on $\Spf k[b_i]$ is given by $t\cdot (b_i)=(tb_i)$. The locus of singular deformations of the node $q_i$ is cut out by the equation $\{b_i=0\}$, which has $\Gm$-weight one. 

On the other hand, using that $\car(k)\neq 2$, the semiuniversal deformation space of the tacnode $p$ is equal to  $\Def(\wh{O}_{C, p})\cong \Spf k[a_2,a_1,a_0]$ and the semiuniversal deformation family is given by $y^2=x^4+a_2x^2+a_1x+a_0$. 
Since the action on $\Gm$ on the local coordinates are $t\cdot x=(t^{-1}x)$ and $t\cdot (y)=(t^{-2}y)$  by \cite[Rmk. 1.4]{CTV1},  the action of $\Gm$ on $\Spf k[a_2,a_1,a_0]$ is given  by $t\cdot (a_2,a_1,a_0)=(t^{-2}a_2, t^{-3}a_1, t^{-4}a_0)$.
The locus of singular deformations of $p$ is cut out  in  $\Def(\wh{O}_{C, p})$ by the equation $\{\Delta=0\}$, where $\Delta:=\Delta(a_2,a_1,a_0)$ is the discriminant of the polynomial $x^4+a_2x^2+a_1x+a_0$. Since the discriminant is a homogeneous polynomial of degree 12 in the roots of the above polynomial and $\Gm$ acts on the roots with weight $-1$ (the same weight of $x$), it follows that $\Gm$ acts on the discriminant with weights $-12$. 

From the above discussion, it follows that the only boundary divisors of $\M_{g,n}^T$ that can have a non-zero weight against $\rho_E$ are the ones whose equation on $\Def(C,\{p_i\})$ is given by $\{0=\Phi^*(\Delta)\cdot 
\prod_{q_i \: \text{node}} \Phi^*(b_i)\}$. The Cartier divisor $\{0=\Phi^*(\Delta)\}$ comes from the restriction of $\delta_{\irr}$ to $\Def(C,\{p_i\})$, since for each generic point of $\{0=\Phi^*(\Delta)\}$ (indeed there are two generic points), the elliptic tacnodal bridge has been replaced by a nodal elliptic bridge, whose unique node is internal and hence of type $\{\irr\}$. On the other hand, depending on the types of the nodes in $\{q_1,q_2\}$, the Cartier divisor $\{0= \prod_{q_i \: \text{node}} \Phi^*(b_i)\}$ is the restriction to $\Def(C,\{p_i\})$ of the following divisor on $\M_{g,n}^T$:
\begin{itemize}
\item $2\delta_{\irr}$ if $\type(E,q_1,q_2)=\{\irr\}$; 
\item $\delta_{i, I}+ \delta_{g-1-i,I^c}=\delta_{i, I}+\delta_{i+1,I}$ if $\type(E,q_1,q_2)=\{[i,I],[g-1-i,I^c]\}=\{[i,I],[i+1,I]\}$ and $q_1$ and $q_2$ are both nodes of $C$;
\item $\delta_{1,\{i\}}$ if $\type(E,q_1,q_2)=\{[0,\{i\}],[1,\{i\}]\}$ and one among $\{q_1,q_2\}$ is a node;
\item $\cO_{\M_{g,n}^T}$ if neither $q_1$ nor $q_2$ are nodes (which can occur only if $(g,n)=(1,2)$).
\end{itemize}

We now conclude, using the above mentioned \cite[Lemma 3.11]{AFS0},  that the weights of the boundary divisors are equal to 
\begin{equation}\label{E:bouweight}
\begin{sis}
& \langle \delta_{\irr}, \rho_E\rangle=
\begin{cases}
  10 & \text{ if }\type(E,q_1,q_2)=\{\irr\}, \\
 12  & \text{if }\type(E,q_1,q_2)\neq \{\irr\},
\end{cases}\\
& \langle \delta_{i,I},\rho_E\rangle =
\begin{cases}
  -1 & \text{ if }[i,I]\in \type(E,q_1,q_2), \\
 0  & \text{if }[i,I]\not\in \type(E,q_1,q_2).
\end{cases}\\
\end{sis}
\end{equation}
By putting together \eqref{E:weightlam}, \eqref{E:psiweight} and \eqref{E:bouweight}, we deduce that \eqref{E:weight1} holds, and we are done.
\end{proof}

We now  discuss when $\MM_{g,n}^T$ is  $\bbQ$-factorial  or $\bbQ$-Gorenstein. We will first need the following

\begin{definition}\label{D:Tdiv}\cite[Def. 4.6]{CTV1}
Given a subset $T\subseteq T_{g,n}$, we define the \emph{divisorial part} of $T$ as the (possible empty) subset $T^{\div}\subset T$ defined by  
$$T^{\div}:=
\begin{cases}
\emptyset & \text{ if } (g,n)=(1,1) \, \text{ or } (2,1), \\
 \{\{[0,\{i\}],[1,\{i\}]\}: \:  \{[0,\{i\}],[1,\{i\}]\}\subset T\} & \text{ otherwise. } 
\end{cases}
$$
\end{definition}
Since $[0,\{i\}]=[1,\emptyset]$ if and only if $(g,n)=(1,1)$ and $[1,\{i\}]=[1,\emptyset]$ if and only if $(g,n)=(2,1)$, the subset $T^{\div}\subseteq T_{g,n}$ is admissible in the sense of Definition \ref{def:admissible}. Note that if $n=0$ then $T^{\div}=\emptyset$ for any subset $T$. 

\begin{proposition}\label{P:QfatQG}
Assume that $(g,n)\neq (2,0)$, $\car(k)\gg (g,n)$, and let $T\subseteq T_{g,n}$. Then the following conditions are equivalent:
\begin{enumerate}[(i)]
\item \label{P:QfatQG1} $\MM_{g,n}^T$ is $\bbQ$-factorial.
\item \label{P:QfatQG2} $\MM_{g,n}^T$ is $\bbQ$-Gorenstein.
\item \label{P:QfatQG3} $T^{\adm}=T^{\div}$.
\end{enumerate}
Under the above conditions and assuming that $(g,n)\neq (1,2), (2,1), (3,0)$, we have the following crepant equation on $\MM_{g,n}^T$ 
\begin{equation}\label{E:canT}
(\phi^T)^*(K_{\MM_{g,n}^T})=K_{\M_{g,n}^T}-8\sum_{\{[0,\{j\}], [1,\{j\}]\}\subseteq T} \delta_{1,\{j\}}=13\lambda-2\delta+\psi-8\sum_{\{[0,\{j\}], [1,\{j\}]\}\subseteq T} \delta_{1,\{j\}}.
\end{equation}

\end{proposition}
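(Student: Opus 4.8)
The plan is to prove the cyclic chain of implications $(iii)\Rightarrow(i)\Rightarrow(ii)\Rightarrow(iii)$, and then to establish the crepant formula \eqref{E:canT} separately.

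For $(iii)\Rightarrow(i)$, assume $T^{\adm}=T^{\div}$; replacing $T$ by $T^{\adm}$ changes neither $\M_{g,n}^T$ (by Proposition \ref{P:equaT}) nor the conclusion, so we may assume $T=T^{\adm}=T^{\div}$, which means $T$ consists only of pairs $\{[0,\{i\}],[1,\{i\}]\}$. The idea is that for such $T$ the morphism $f_T:\MM_{g,n}^{\ps}\to\MM_{g,n}^T$ contracts only the elliptic bridge curves $C([0,\{i\}],[1,\{i\}])$, and by the modular description of $f_T$ (this should come from Proposition \ref{P:factor}, the modular factorisation into divisorial contractions followed by a small contraction) each such step is a \emph{divisorial} contraction, with no small part. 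Concretely, I would argue that $\MM_{g,n}^T$ differs from $\MM_{g,n}^{\ps}$ by contracting certain $\psi$-type boundary divisors, and since $\MM_{g,n}^{\ps}$ is $\bbQ$-factorial (it has finite quotient singularities away from the relevant locus, or one invokes \cite[Prop. 3.1]{CTV1}) and a divisorial contraction from a $\bbQ$-factorial variety with relative Picard rank one is again $\bbQ$-factorial, we conclude. Alternatively, and perhaps more cleanly: when $T=T^{\div}$ the good moduli space $\MM_{g,n}^T$ in fact still has only finite quotient singularities, since the tacnodal elliptic bridges being contracted are attached at marked points and the $\Gm$-quotients involved are finite quotients; finite quotient singularities are $\bbQ$-factorial.

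For $(i)\Rightarrow(ii)$ there is nothing to do, since $\bbQ$-factorial implies $\bbQ$-Gorenstein is false in general — so instead I would prove $(i)\Rightarrow(iii)$ and $(ii)\Rightarrow(iii)$ together, by contraposition. Suppose $T^{\adm}\neq T^{\div}$, i.e.\ $T^{\adm}$ contains some pair $\{[\tau,I],[\tau+1,I]\}$ or $\{\irr\}$ that is not of the form $\{[0,\{i\}],[1,\{i\}]\}$. Using Proposition \ref{P:descend} and Lemma \ref{L:weight}, the $T$-compatibility condition \eqref{E:equaT} then imposes a genuine linear relation involving $\delta_{\irr}$, which means $\delta_{\irr}$ (or some boundary class) does \emph{not} descend to $\MM_{g,n}^T$; hence $\dim\Pic(\MM_{g,n}^T)_\bbQ<\dim\Pic(\M_{g,n}^T)_\bbQ$ strictly, and in fact $\Cl(\MM_{g,n}^T)_\bbQ$ is strictly larger than $\Pic(\MM_{g,n}^T)_\bbQ$, contradicting $\bbQ$-factoriality. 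For the failure of $\bbQ$-Gorenstein, I would show that the canonical class $K_{\M_{g,n}^T}=13\lambda-2\delta+\psi$ (restricted appropriately) is \emph{not} $T$-compatible whenever $T^{\adm}$ contains a non-$T^{\div}$ pair: checking \eqref{E:equaT} for $K$ gives $13+10(-2)=-7\neq 0$ for $\{\irr\}$ and $13-24-(-2)-(-2)=-7\neq 0$ (after accounting for the $\psi$-contributions which vanish on non-$\psi$ pairs) for a pair $\{[\tau,I],[\tau+1,I]\}$ with $\tau\geq 1$; so $K$ does not descend, and since $K_{\MM_{g,n}^T}$ would have to pull back to $K_{\M_{g,n}^T}$ up to a multiple (as $\phi^T$ is a good moduli morphism, étale-locally a quotient, hence crepant for $\bbQ$-Gorenstein), $\bbQ$-Gorenstein fails. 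I expect this last numerical check to be the technical heart: one must carefully track, via Lemma \ref{L:weight}, exactly which boundary corrections appear, and distinguish the $\psi$-pairs (where the weight of $\delta_{i,I}$ picks up the $\psi$-correction so that $K$ \emph{does} remain compatible, consistent with \eqref{E:canT} after the $-8\sum\delta_{1,\{j\}}$ twist) from the higher-genus pairs (where it genuinely fails).

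Finally, for the crepant equation \eqref{E:canT} under the hypotheses $(g,n)\neq(1,2),(2,1),(3,0)$: granting $T^{\adm}=T^{\div}$, write $T=T^{\div}$. Since $\phi^T:\M_{g,n}^T\to\MM_{g,n}^T$ is a good moduli morphism which is étale-locally a finite quotient (away from codimension $\geq 2$ one is on $\M_{g,n}^{\ps}$, and $\car(k)\gg(g,n)$ ensures the quotients are tame), it is crepant, so $(\phi^T)^*K_{\MM_{g,n}^T}=K_{\M_{g,n}^T}+E$ where $E$ is supported on the $\Gm$-fixed loci, i.e.\ the $\psi$-type bridge divisors $\delta_{1,\{j\}}$ for $\{[0,\{j\}],[1,\{j\}]\}\subseteq T$. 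To find the coefficient, I would impose that the right-hand side be $T$-compatible (it must descend): checking \eqref{E:equaT} for $K_{\M_{g,n}^T}+c\sum\delta_{1,\{j\}}=13\lambda-2\delta+\psi+c\sum\delta_{1,\{j\}}$ against the pair $\{[0,\{j\}],[1,\{j\}]\}$ — note $\delta_{1,\{j\}}$ has coefficient $-2+c$ in $-2\delta+c\sum\delta_{1,\{j\}}$ while $\delta_{0,\{j\}}=-\psi_j$ has coefficient $-1$ from $+\psi$, and the equation reads $a+12b_{\irr}-b_{0,\{j\}}-b_{1,\{j\}}=0$, giving (with $a=13$, $b_{\irr}=-2$, $b_{0,\{j\}}=-1$, $b_{1,\{j\}}=-2+c$) the relation $13-24+1+2-c=0$, hence $c=-8$. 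This yields $(\phi^T)^*K_{\MM_{g,n}^T}=K_{\M_{g,n}^T}-8\sum\delta_{1,\{j\}}=13\lambda-2\delta+\psi-8\sum\delta_{1,\{j\}}$, as claimed. The exclusion of the small pathological cases is exactly to guarantee that $K_{\M_{g,n}^T}=13\lambda-2\delta+\psi$ (the standard formula for $\M_{g,n}$, which extends by the codimension-$2$ argument of Proposition \ref{P:PicMps}) and that the relevant $\delta_{1,\{j\}}$ are genuine nonzero divisor classes; the main obstacle here is simply bookkeeping the correct identification of which divisor gets the $-8$ and verifying the crepancy of $\phi^T$ rather than merely $\bbQ$-Gorenstein compatibility.
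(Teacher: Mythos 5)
Your proposal gets the easy half right — the non-descent of $K$ when $T^{\adm}\neq T^{\div}$ (your $-7\neq 0$ checks match the paper's), and your determination of the coefficient $-8$ by forcing $T$-compatibility of $K_{\M_{g,n}^T}+c\sum\delta_{1,\{j\}}$ is essentially equivalent to the paper's computation (which instead intersects $f_T^*(K_{\MM_{g,n}^T})=K_{\MM_{g,n}^{\ps}}-\sum\gamma_i\delta_{1,\{a_i\}}$ with the contracted elliptic bridge curves via Lemma \ref{L:int-C}). But the central implication \eqref{P:QfatQG3}$\Rightarrow$\eqref{P:QfatQG1} is not actually proved. Your first route (``divisorial contraction of a $\bbQ$-factorial variety with relative Picard rank one is again $\bbQ$-factorial'') is an MMP statement that relies on the contraction/base-point-free theorem, hence on $\car(k)=0$, whereas the proposition is asserted for $\car(k)\gg(g,n)$ where $\MM_{g,n}^T$ is a priori only a proper algebraic space; moreover invoking Proposition \ref{P:factor} is circular, since its proof uses this very proposition. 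Your second route asserts that $\MM_{g,n}^{T^{\div}}$ has finite quotient singularities, but the closed points being contracted carry $\Gm$ in their automorphism groups, so the local models are quotients of the deformation space by a positive-dimensional group; ``finite quotient'' is exactly what would need proving and is not justified. The paper's actual argument is a dimension count: $f_T$ contracts precisely the $|T^{\div}|/2$ divisors $\Delta_{1,\{j\}}$ (by \cite[Prop. 4.2(iii)]{CTV1}), giving $\dim\Cl(\MM_{g,n}^T)_\bbQ=\dim\Pic(\M_{g,n}^T)_\bbQ-|T^{\div}|/2$, while Proposition \ref{P:descend} shows $\dim\Pic(\MM_{g,n}^T)_\bbQ\geq\dim\Pic(\M_{g,n}^T)_\bbQ-|T^{\div}|/2$ because $T$-compatibility for $T=T^{\div}$ imposes only those relations; hence $\Cl=\Pic$. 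Your contrapositive for \eqref{P:QfatQG1}$\Rightarrow$\eqref{P:QfatQG3} has the same hole: to conclude $\Cl\supsetneq\Pic$ from ``$\delta_{\irr}$ does not descend'' you need this count of contracted divisors and the independence of the compatibility relations, neither of which you supply.

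Two further problems. First, your claim that ``$\bbQ$-factorial implies $\bbQ$-Gorenstein is false in general'' is wrong in the conventions used here: on a normal space $K$ is a Weil divisor, so $\bbQ$-factoriality makes it $\bbQ$-Cartier, and \eqref{P:QfatQG1}$\Rightarrow$\eqref{P:QfatQG2} is indeed immediate (as the paper says). With your replacement chain $\{(iii)\Rightarrow(i),\ (i)\Rightarrow(iii),\ (ii)\Rightarrow(iii)\}$ nothing ever implies \eqref{P:QfatQG2}, so the equivalence is not closed. Second, the equivalence is claimed for all $(g,n)\neq(2,0)$, but your \eqref{P:QfatQG2}$\Rightarrow$\eqref{P:QfatQG3} argument silently uses $K_{\MM_{g,n}^{\ps}}=13\lambda-2\delta+\psi$, which fails for $(g,n)=(2,1),(3,0)$ (there $K_{\MM_{g,n}^{\ps}}=13\lambda-2\delta+\psi-R$ with $R$ effective, see \cite[Remark 3.4]{CTV1}); the paper treats these cases separately, noting $f_T$ is small there and deriving a contradiction from $0=C(\irr)\cdot f_T^*(K_{\MM_{g,n}^T})\leq C(\irr)\cdot(13\lambda-2\delta+\psi)=-7$. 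Also, your appeal to ``crepancy of $\phi^T$'' should be replaced by the precise statement that $f_T^*(K_{\MM_{g,n}^T})-K_{\MM_{g,n}^{\ps}}$ is supported on the $f_T$-exceptional divisors, which again requires identifying $\Exc(f_T)$ from \cite{CTV1}.
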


\begin{proof}
By Proposition \ref{P:equaT}, we can assume that $T=T^{\adm}$. By \cite[Prop. 4.2(iii)]{CTV1},   the morphism $f_T:\MM_{g,n}^{\ps}\to \MM_{g,n}^T$ contracts exactly $|T^{\div}|/2$ boundary divisors, namely the ones of the form $\Delta_{1,\{j\}}$ for any $1\leq j \leq n$ such that $\{[0,\{j\}], [1,\{j\}]\}\subset T$. From this and the fact that we have natural identifications 
$\Cl(\MM_{g,n}^{\ps})_{\bbQ}= \Pic(\MM_{g,n}^{\ps})_{\bbQ}\cong \Pic(\M_{g,n}^{\ps})_{\bbQ}\cong \Pic(\M_{g,n}^T)_{\bbQ}$ by Proposition \ref{P:PicMps} and \eqref{E:PicMgT}, we deduce that 
\begin{equation}\label{E:ClT}
\dim_{\bbQ} \Cl(\MM_{g,n}^T)_{\bbQ}=  \dim_{\bbQ} \Cl(\MM_{g,n}^{\ps})_{\bbQ}-\frac{|T^{\div}|}{2} =\dim_{\bbQ} \Pic(\M_{g,n}^T)_{\bbQ}-\frac{|T^{\div}|}{2}.
\end{equation}

Let us now prove the equivalence of the conditions in the statement. 

\eqref{P:QfatQG3}$\Rightarrow$ \eqref{P:QfatQG1}: Proposition \ref{P:descend} implies  that $\Pic(\MM_{g,n}^T)_{\bbQ}$ is identified, via the pull-back $(\phi^T)^*$, with the subgroup of $\Pic(\M_{g,n}^T)_{\bbQ}$ formed by $T$-compatible $\bbQ$-line bundles. Since $T=T^{\div}$ by assumption and each pair $\{[0,\{k\}], [1,\{k\}]\}\subseteq T^{\div}$ gives one relation of $T$-compatibility   (see Definition \ref{D:Tcomp}), we have that 
  \begin{equation}\label{E:PicT}
\dim_{\bbQ} \Pic(\MM_{g,n}^T)_{\bbQ}\geq \dim_{\bbQ} \Pic(\M_{g,n}^T)_{\bbQ}-\frac{|T^{\div}|}{2}.
\end{equation}
Combining \eqref{E:ClT} and \eqref{E:PicT}, we deduce that  $\dim_{\bbQ} \Cl(\MM_{g,n}^T)_{\bbQ}=\dim_{\bbQ} \Pic(\MM_{g,n}^T)_{\bbQ}$, i.e. that $\MM_{g,n}^T$ is $\bbQ$-factorial.

\eqref{P:QfatQG1}$\Rightarrow$ \eqref{P:QfatQG2}: obvious.

\eqref{P:QfatQG2}$\Rightarrow$ \eqref{P:QfatQG3}:
First, we assume that $\MM_{g,n}^T$ is $\bbQ$-Gorenstein and that $(g,n)\neq (1,2), (2,1), (3,0)$, and we prove formula \eqref{E:canT}. By the commutative diagram \eqref{E:diag-spaces}  and using the identification $\Pic(\M_{g,n}^T)_{\bbQ}\cong \Pic(\M_{g,n}^{\ps})_{\bbQ}\cong \Pic(\MM_{g,n}^{\ps})_{\bbQ}$ by Proposition \ref{P:PicMps} and \eqref{E:PicMgT}, we have that  $(\phi^T)^*(K_{\MM_{g,n}^T})=f_T^*(K_{\MM_{g,n}^T})$.
Since, as discussed above, the exceptional divisors of $f_T$ are $\{\Delta_{1,\{a_i\}}\}_{i=1}^k$ for some $\{a_1,\ldots, a_k\}\subseteq [n]$, we can write 
\begin{equation}\label{E:canT1}
(\phi^T)^*(K_{\MM_{g,n}^T})=f_T^*(K_{\MM_{g,n}^T})=K_{\MM_{g,n}^{\ps}}-\sum_{i=1}^k \gamma_i \cdot \delta_{1,\{a_i\}},
\end{equation}
for some $\gamma_i\in \bbQ$. Using our assumptions on $(g,n)$, Proposition \cite[Prop. 3.1(ii)]{CTV1} and Mumford formula (see e.g. \cite[Fact 1.28(ii)]{CTV1}) imply that 
$$K_{\MM_{g,n}^{\ps}}=K_{\M_{g,n}^{\ps}}=K_{\M_{g,n}^T}=13\lambda-2\delta+\psi.$$
Substituting into \eqref{E:canT1}, we get the formula 
\begin{equation}\label{E:canT2}
(\phi^T)^*(K_{\MM_{g,n}^T})=f_T^*(K_{\MM_{g,n}^T})=13\lambda-2\delta+\psi-\sum_{i=1}^k \gamma_i \cdot \delta_{1,\{a_i\}}.
\end{equation}
For any $1\leq j \leq k$, consider the elliptic bridge curve $C_j:=C([0,\{a_j\}], [1,\{a_j\}])$, see Definition \ref{D:ell-bridg}.   From \cite[Prop. 4.2]{CTV1}, it follows that $C_j$ is contracted by $f_T$.
Hence, by projection formula, we have that 
\begin{equation}\label{E:projfor}
0=C_j\cdot f_T^*(K_{\MM_{g,n}^T})=C_j\cdot \left(13\lambda-2\delta+\psi-\sum_{i=1}^k \gamma_i \cdot \delta_{1,\{a_i\}}\right).
\end{equation}
Now Lemma \ref{L:int-C} gives that 
\begin{equation}\label{E:intCj}
C_j\cdot  \left(13\lambda-2\delta+\psi\right)=-8 \quad \text{ and } \quad 
C_j\cdot \delta_{1,\{a_i\}}=
\begin{cases} 
-1 & \text{ if } i=j, \\
0 & \text{ otherwise.} \\
\end{cases}
\end{equation}
Substituting \eqref{E:intCj} into \eqref{E:projfor}, we get that $\gamma_i=8$ for every $1\leq i \leq k$; hence  \eqref{E:canT} is proved.  

Now we can prove that $T=T^{\adm}=T^{\div}$ under the assumption that $(g,n)\neq (1,2), (2,1), (3,0)$. Indeed, by contradiction, suppose that this is not the case. Then $T$ contains either $\{\irr\}$ or a pair $\{[\tau, I], [\tau+1,I]\}$ that is different from $\{[0,\{j\}, [1,\{j\}]\}$ for any $1\leq j\leq n$.  In any of these cases, one of the conditions \eqref{E:equaT} does not hold for  the line bundle $13\lambda-2\delta+\psi-8 \sum_{i=1}^k \delta_{1,\{a_i\}}$.  But then, by formula \eqref{E:canT}, this means that $(\phi^T)^*(K_{\MM_{g,n}^T})$ is not $T$-compatible and this is absurd by Proposition \ref{P:descend}.

It remains to deal with the special cases $(g,n)= (1,2), (2,1), (3,0)$, where formula \eqref{E:canT} is false (see \cite[Remark  3.4]{CTV1}). 
The case $(g,n)=(1,2)$ is easy since  $T^{\adm}=T^{\div}$ for any $T$.
Assume now that $(g,n)=(2,1)$ or $(3,0)$. In each of these cases, $T^{\div}=\emptyset$ while $T=T^{\adm}=\emptyset$ or $\{\irr\}$. By contradiction, assume that $T=\{\irr\}$. By \cite[Prop. 4.2(iii)]{CTV1}, the morphism $f_T$ is small and hence $f_T^*(K_{\MM_{g,n}^T})=K_{\MM_{g,n}^{\ps}}.$
Moreover,  \cite[Remark  3.4]{CTV1} implies  that 
$$f_T^*(K_{\MM_{g,n}^T})=K_{\MM_{g,n}^{\ps}}=13\lambda-2\delta+\psi-R,$$
where $R$ is an effective divisor not contained in the boundary of $\MM_{g,n}^{\ps}$. Consider now the elliptic bridge curve $C(\irr)$, see Definition \ref{D:ell-bridg}. From \cite[Prop. 4.2]{CTV1}, it follows that $C(\irr)$ is contracted by $f_T$.
  Hence, by projection formula, we have that $C(\irr)\cdot f_T^*(K_{\MM_{g,n}^T})=0$. Moreover, $C(\irr)\cdot R\geq 0$ since $C(\irr)$ is not contained in $R$, being entirely contained in the boundary. Using these facts and Lemma \ref{L:int-C}, we compute 
\begin{equation*}
0=C(\irr)\cdot f_T^*(K_{\MM_{g,n}^T})=C(\irr)\cdot \left(13\lambda-2\delta+\psi-R\right)\leq C(\irr)\cdot \left(13\lambda-2\delta+\psi\right) =-7,
\end{equation*}
which is the desired contradiction. 
\end{proof}

We finally describe a factorisation of the morphism $f_T$ into a divisorial contraction and a small contraction.
 
\begin{proposition}\label{P:factor}
Assume that $(g,n)\neq (2,0)$, $\car(k)\gg (g,n)$ and let $T\subseteq T_{g,n}$. The morphism $f_T:\MM_{g,n}^{\ps}\to \MM_{g,n}^T$ can be factorised as follows 
\begin{equation}\label{E:fact-fT}
f_T\colon \MM_{g,n}^{\ps}\xrightarrow{f_{T^{\div}}}\MM_{g,n}^{T^{\div}}\xrightarrow{\sigma_T} \MM_{g,n}^T
\end{equation}
in such a way that 
\begin{enumerate}[(i)]
\item \label{P:factor1} The morphism $f_{T^{\div}}$ is a composition of $\frac{1}{2}|T^{\div}|$ divisorial contractions, each one of them having the relative Mori cone generated by a $K$-negative extremal ray. 
\item \label{P:factor1bis} The algebraic space $\MM_{g,n}^{T^{\div}}$ is $\bbQ$-factorial and, if $\car(k)=0$, klt.

\item \label{P:factor2} The morphism $\sigma_T$ is a small contraction.
\item \label{P:factor3} 

The relative Mori cone of $\sigma_T$ is  a $K_{\MM_{g,n}^{T^{\div}}}$-negative face if and only if $T$ does not contain subsets of the form $\{[0,\{j\}],[1,\{j\}],[2,\{j\}]\}$ for some $j\in [n]$ or  $(g,n)=(3,1), (3,2), (2,2)$. 
\end{enumerate}

\end{proposition}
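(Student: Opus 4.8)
The plan is to construct the factorisation \eqref{E:fact-fT} directly from the description of the stacks $\M_{g,n}^T$ and their good moduli spaces. By Proposition \ref{P:equaT} we may assume $T=T^{\adm}$, and since $T^{\div}\subseteq T^{\adm}=T$ is itself admissible, we have open inclusions of stacks $\M_{g,n}^{\ps}\subseteq \M_{g,n}^{T^{\div}}\subseteq \M_{g,n}^T$, which by Theorem \ref{T:goodsp} pass to good moduli spaces and give $f_T=\sigma_T\circ f_{T^{\div}}$. For \eqref{P:factor1}, I would argue that $\M_{g,n}^{T^{\div}}$ is obtained from $\M_{g,n}^{\ps}$ by successively adjoining, one pair $\{[0,\{j\}],[1,\{j\}]\}$ at a time, so that $f_{T^{\div}}$ is a composition of $\frac12|T^{\div}|$ morphisms, each of which is (by \cite[Prop. 4.2(iii)]{CTV1}) the contraction of exactly one boundary divisor $\Delta_{1,\{j\}}$; in characteristic zero each such step contracts $\Delta_{1,\{j\}}$ onto a lower-dimensional locus with the elliptic bridge curve $C([0,\{j\}],[1,\{j\}])$ spanning the relative Mori cone, and this ray is $K$-negative by the computation $C_j\cdot(13\lambda-2\delta+\psi)=-8<0$ already carried out in the proof of Proposition \ref{P:QfatQG} (together with $K_{\MM_{g,n}^{\ps}}=13\lambda-2\delta+\psi$ in the relevant range of $(g,n)$; the exceptional small cases are handled as there). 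For positive characteristic the "divisorial" and "extremal ray" claims are interpreted at the level of the exceptional locus and the generator of the space of relative curve classes, using Proposition \ref{P:descend} to see that $\dim_\bbQ\Cl$ drops by one at each step.

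For \eqref{P:factor1bis}, $\bbQ$-factoriality of $\MM_{g,n}^{T^{\div}}$ is exactly Proposition \ref{P:QfatQG} applied with $T$ replaced by $T^{\div}$ (since $(T^{\div})^{\adm}=T^{\div}=(T^{\div})^{\div}$). The kltness in characteristic zero I would get from \cite{BCHM10} / standard MMP: $\MM_{g,n}^{\ps}$ has klt singularities (it has finite quotient singularities away from the boundary and is a modular contraction of $\MM_{g,n}$), and a divisorial contraction of a $K$-negative extremal ray from a $\bbQ$-factorial klt space is again $\bbQ$-factorial and klt, so kltness propagates along the tower $\MM_{g,n}^{\ps}\to\MM_{g,n}^{T^{\div}}$. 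For \eqref{P:factor2}, the morphism $\sigma_T$ is small because $T$ and $T^{\div}$ have the same divisorial part — by \cite[Prop. 4.2(iii)]{CTV1} the number of boundary divisors contracted by $f_S$ depends only on $|S^{\div}|$, and $T^{\div}=(T^{\div})^{\div}$, so $f_T$ and $f_{T^{\div}}$ contract the same boundary divisors; hence $\sigma_T$ contracts no divisor. Concretely, $\sigma_T$ contracts precisely the (images in $\MM_{g,n}^{T^{\div}}$ of the) elliptic bridge curves of type contained in $T\setminus T^{\div}$, none of whose numerical classes involve a boundary divisor being swept out.

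The substantive part is \eqref{P:factor3}: deciding when the relative Mori cone of $\sigma_T$ lies in the $K_{\MM_{g,n}^{T^{\div}}}$-negative half-space. Here I would use the crepant formula \eqref{E:canT} from Proposition \ref{P:QfatQG}, namely $(\phi^{T^{\div}})^*(K_{\MM_{g,n}^{T^{\div}}})=13\lambda-2\delta+\psi-8\sum_{\{[0,\{j\}],[1,\{j\}]\}\subseteq T}\delta_{1,\{j\}}$, and intersect it with each elliptic bridge curve generating the relative Mori cone of $\sigma_T$, using the intersection numbers of Lemma \ref{L:int-C}. For a generator $C(\irr)$ one gets $(13\lambda-2\delta+\psi)\cdot C(\irr)=-7<0$, always negative; for $C([\tau,I],[\tau+1,I])$ one gets $(13\lambda-2\delta+\psi)\cdot C([\tau,I],[\tau+1,I])=-1$, to which must be added the correction $+8$ coming from $-8\delta_{1,\{j\}}$ exactly when the pair $\{[\tau,I],[\tau+1,I]\}$ is "adjacent" to a divisorial pair $\{[0,\{j\}],[1,\{j\}]\}\subseteq T^{\div}$, i.e.\ precisely when $\{[1,\{j\}],[2,\{j\}]\}\subseteq T$, giving intersection $-1+8=7>0$ in that case. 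Thus the relative Mori cone is $K$-negative if and only if no such configuration $\{[0,\{j\}],[1,\{j\}],[2,\{j\}]\}$ occurs in $T$; the listed exceptions $(g,n)=(3,1),(3,2),(2,2)$ are the pairs where the divisorial structure degenerates (cf.\ the exclusions in Proposition \ref{P:QfatQG} and \cite[Remark 3.4]{CTV1}) and must be checked by hand. The main obstacle I anticipate is bookkeeping: correctly matching which elliptic bridge curves generate the relative cone of $\sigma_T$ versus $f_{T^{\div}}$, and handling the low-genus/low-$n$ exceptional cases where $K_{\MM_{g,n}^{\ps}}$ is not simply $13\lambda-2\delta+\psi$.
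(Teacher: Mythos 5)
Your proposal follows the same skeleton as the paper's proof: factor $f_T$ through the open inclusions of stacks, prove \eqref{P:factor1} by adjoining one divisorial pair at a time and testing $K$-negativity on the elliptic bridge curve $C([0,\{a_{j+1}\}],[1,\{a_{j+1}\}])$ via the crepant formula \eqref{E:canT} and Lemma \ref{L:int-C}, deduce \eqref{P:factor1bis} from Proposition \ref{P:QfatQG} plus preservation of klt under $K$-negative divisorial contractions, get \eqref{P:factor2} from the divisor count of \cite[Prop.\ 4.2(iii)]{CTV1}, and prove \eqref{P:factor3} by intersecting the pullback of $K_{\MM_{g,n}^{T^{\div}}}$ with the generators of the relative cone of $\sigma_T$. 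Two points in \eqref{P:factor1} need tightening rather than repair: the negativity must be tested against $K_{\MM_{g,n}^{T^j}}$ of each \emph{intermediate} space, i.e.\ against $13\lambda-2\delta+\psi-8\sum_{h\le j}\delta_{1,\{a_h\}}$, not merely against $K_{\MM_{g,n}^{\ps}}$; this is harmless because the correction terms meet $C([0,\{a_{j+1}\}],[1,\{a_{j+1}\}])$ trivially, so the number is still $-8$, but it must be said. Likewise, to speak of an ``extremal ray'' you need relative Picard number one at each step, which comes from Propositions \ref{P:PicMps} and \ref{P:descend} (the paper spells this out), and the identification of the generators of the relative cone of $\sigma_T$ (images of the elliptic bridge curves whose types are the minimal subsets of $T^{\adm}$ not contained in $T^{\div}$) is not automatic: it rests on \cite[Prop.\ 4.2(ii), Lemma 3.12(ii)]{CTV1}.

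The genuine gap is in \eqref{P:factor3}. First, your intersection numbers are wrong: by Lemma \ref{L:int-C} one has $(13\lambda-2\delta+\psi)\cdot C([\tau,I],[\tau+1,I])=-7$ in the generic case (not $-1$), and when $S=\{[1,\{j\}],[2,\{j\}]\}$ with $\{[0,\{j\}],[1,\{j\}]\}\subseteq T^{\div}$ the correction $-8\delta_{1,\{j\}}$ raises this to $-7+8=+1$ (not $+7$); the signs happen to be the same, so your dichotomy survives, but the arithmetic must be redone. Second, and more seriously, you attribute the exceptional cases $(g,n)=(3,1),(3,2),(2,2)$ to ``degeneration of the divisorial structure'' as in the exclusions of Proposition \ref{P:QfatQG} and \cite[Rmk.\ 3.4]{CTV1}. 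Those exclusions concern $(g,n)=(1,2),(2,1),(3,0)$, where $K_{\MM_{g,n}^{\ps}}\neq 13\lambda-2\delta+\psi$ and which must indeed be set aside (and handled separately, as the paper does: $\sigma_T=\id$ for $(1,2)$, $T^{\div}=\emptyset$ for $(2,1),(3,0)$) \emph{before} formula \eqref{E:canT} can be invoked at all. The list $(3,1),(3,2),(2,2)$ in the statement arises for a different reason, namely the identification $[\tau,I]\sim[g-\tau,I^c]$ in $T_{g,n}$: for instance, for $(3,1)$ one has $[2,\{j\}]=[1,\emptyset]$, so $\{[1,\{j\}],[2,\{j\}]\}$ is not a minimal subset and contributes no curve to the relative cone, and for $(2,2)$ the pair $\{[1,\{j\}],[2,\{j\}]\}$ coincides with a divisorial pair already contracted by $f_{T^{\div}}$. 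Your proposal leaves the ``if'' direction of \eqref{P:factor3} for these $(g,n)$ completely unaddressed (beyond ``checked by hand'' with the wrong pointer), so this part of the equivalence is not established.
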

Note that, if $\car(k)=0$, then all the spaces appearing in \eqref{E:fact-fT} are projective varieties, and hence $f_{T^{\div}}$ is the composition of divisorial contractions of $K$-negative rays while $\sigma_T$ is a small contraction of a $K$-negative face if and only the condition on $T$ appearing in \eqref{P:factor3} is satisfied.

\begin{proof}
The open inclusions of stacks 
$$
\M_{g,n}^{\ps}\hookrightarrow \M_{g,n}^{T^{\div}}\hookrightarrow \M_{g,n}^T
$$
induce the requested factorisation of $f_T$ by passing to the good moduli spaces. Let us show that the morphisms $f_{T^{\div}}$ and $\sigma_T$ have the required properties.

Part \eqref{P:factor1}: note that we can assume $T^{\div}\neq \emptyset$, for otherwise $f_{T^{\div}}=\id$ and there is nothing to prove. 
For $i\in[n]$, let $T_i=\{[0,\{i\}],[1,\{i\}]\}$; we can write $T^{\div}=\bigcup_{i=1}^k T_{a_i}$, with $a_i\in [n]$ and $k=\frac{1}{2}|T^{\div}|$. It is also convenient to let $T^j=\bigcup_{i=1}^j T_{a_i}$ for $1\leq j\leq k$, and $T^0:=\ps$. We have  open embedding of stacks
$$\M_{g,n}^{\ps}\subsetneq \M_{g,n}^{T^1}\subsetneq  \M_{g,n}^{T^2}\subsetneq \cdots\subsetneq \M_{g,n}^{T^k}=\M_{g,n}^{T^{\div}}.$$
We denote by $f_{j+1}: \MM_{g,n}^{T^{j}}\to  \MM_{g,n}^{T^{j+1}}$ the morphism induced on the good moduli spaces by the inclusion $\M_{g,n}^{T^{j}}\subsetneq  \M_{g,n}^{T^{j+1}}$.
Note that   $f_{T^j}:=f_j\circ \ldots \circ f_1:\MM_{g,n}^{\ps}\to \MM_{g,n}^{T^j}$ and that  $ \MM_{g,n}^{T^{j}}$ is $\bbQ$-factorial (and hence $\bbQ$-Gorenstein) by Proposition \ref{P:QfatQG}.
Since $f_{T^{\div}}=f_k$, it is enough to show that each  $f_{j+1}$ is a  divisorial contraction whose relative Mori cone is generated  by a $K_{\MM_{g,n}^{T^{j}}}$-negative extremal ray of $\NEb(\MM_{g,n}^{T^{j}})$. 

First of all,  \cite[Prop. 4.2]{CTV1} implies that $f_{j+1}$ is a contraction and its exceptional locus is the divisor $\Delta_{1,\{a_{j+1}\}}$. 
Moreover, by combining Proposition \ref{P:PicMps}, \eqref{E:PicMgT},  Definition \ref{D:Tcomp} and Proposition \ref{P:descend},  we deduce that $f_{j+1}$ has relative Picard number one. Hence it remains  to take an effective curve $C$ contracted by $f_{j+1}$ and show that $K_{\MM_{g,n}^{T^{j}}}\cdot C<0$. 
By \cite[Prop. 4.2(ii)]{CTV1}, we can take as $C$ the curve  $f_{T^j}\left(C(\{[0,\{a_{j+1}\}],[1,\{a_{j+1}\}]\})\right)$, where $C(\{[0,\{a_{j+1}\}],[1,\{a_{j+1}\}]\})$ is the elliptic bridge curve of type $\{[0,\{a_{j+1}\}],[1,\{a_{j+1}\}]\}$ (see Definition \ref{D:ell-bridg}). 

Note that $(g,n)\neq (2,0)$ by hypothesis and $(g,n)\neq (2,1), (3,0)$ since we are assuming that $T^{\div}\neq \emptyset$. Moreover, if $(g,n)=(1,2)$ then  $T^{\div}=\{[0,\{1\}],[1,\{1\}]\}=\{[0,\{2\}],[1,\{2\}]\}$  
and $K_{\MM_{g,n}^{\ps}}\cdot C(\{[0,\{1\}],[1,\{1\}]\})<0$ by \cite[Prop. 3.9(i)]{CTV1}.
We can therefore assume that $(g,n)\neq (2,0), (2,1), (3,0), (1,2)$.  

By the projection formula, it is enough to show that 
\begin{equation*}\label{E:neg-int}
f_{T^j}^*(K_{\MM_{g,n}^{T^j}})\cdot C(\{[0,\{a_{j+1}\}],[1,\{a_{j+1}\}]\})<0.
\end{equation*}
Because of the assumptions on $(g,n)$,  we can apply formula \eqref{E:canT}, and we get  that 
 $$
 f_{T^j}^*(K_{\MM_{g,n}^{T^j}})=13\lambda-2\delta+\psi-8 \sum_{h=1}^j\delta_{1,a_h}\in \Pic(\MM_{g,n}^{\ps})_{\bbQ}.
 $$
Using this formula and Lemma \ref{L:int-C}, we compute that 
$$ f_{T^j}^*(K_{\MM_{g,n}^{T^j}})\cdot C(\{[0,\{a_{j+1}\}],[1,\{a_{j+1}\}]\})=-8,
$$
and we are done.

Part \eqref{P:factor1bis}: $\MM_{g,n}^{T^{\div}}$ is $\bbQ$-factorial by Proposition \ref{P:QfatQG} and, if $\car(k)=0$, it has klt singularities because $\MM_{g,n}^{\ps}$ has klt singularities (see \cite[Prop. 3.1(i)]{CTV1}) and klt singularities are preserved by divisorial contractions. 

Part \eqref{P:factor2} follows from  \cite[Prop. 4.2]{CTV1}.   

Part \eqref{P:factor3}: Let $\{S_j\}_{j\in J}$ be the collection of all minimal subset in $\left(T\setminus T^{\div}\right)$ (see Definition \ref{def:admissible}).
From \cite[Prop. 4.2(ii)]{CTV1} and \cite[Lemma 3.12(ii)]{CTV1}, it follows that the relative Mori cone $\NEb(\sigma_T)$ of $\sigma_T$ is generated by  the curves $\{(f_{T^{\div}})_*C(S_j)\}_{j\in J}$ (see Definition \ref{D:ell-bridg}). 
Therefore,  $\NEb(\sigma_T)$ is $K_{\MM_{g,n}^{T^{\div}}}$-negative if and only if 
\begin{equation}\label{E:negint}
(f_{T^{\div}})^*(K_{\MM_{g,n}^{T^{\div}}})\cdot C(S_j)=(K_{\MM_{g,n}^{T^{\div}}})\cdot (f_{T^{\div}})_*C(S_j)<0  \text{ for any } j\in J,
\end{equation}
where we used the projection formula in the first equality.

The special cases $(g,n)=(1,2), (2,1), (3,0)$ are easy to deal with: in the case $(g,n)=(1,2)$ we have that $T^{\adm}=T^{\div}$ and hence $\sigma_T$ is the identity; in the cases $(g,n)\neq (2,1), (3,0)$ then $T^{\div}=\emptyset$ which implies that $\sigma_T=f_T$. Hence we can assume that $(g,n)\neq (1,2), (2,1), (3,0)$.

Under this assumption, we can apply formula \eqref{E:canT} to get that 
\begin{equation}\label{E:pullcan}
(f_{T^{\div}})^*(K_{\MM_{g,n}^{T^{\div}}})=13\lambda-2\delta+\psi-8\sum_{\{[0,\{i\}], [1,\{i\}]\}\subseteq T} \delta_{1,\{i\}}.
\end{equation}
Using this formula and Lemma \ref{L:int-C} , we get
$$
(f_{T^{\div}})^*(K_{\MM_{g,n}^{T^{\div}}})\cdot C(S_j)=
\begin{sis}
1  & \textrm{ if  }  S_j=\{[1,\{i\}],[2,\{i\}]\} \textrm{  and  } [0,\{i\}]\in T \textrm{  for some  } i,  \\
& \textrm{ (which implies that }(g,n)\neq (3,1), (3,2), (2,2) )\\
-7 & \textrm{   otherwise.}
\end{sis}
$$
This concludes the proof of part \eqref{P:factor3}.
\end{proof}

In Proposition \ref{P:factor} we considered the contraction of the $K$-negative extremal face $F_T$, and showed that it can be decomposed into a sequence of elementary divisorial contractions which correspond exactly to the divisorial extremal rays of $F_T$,  followed by a small contraction. In general contractions of extremal faces can behave in a more subtle way as the following example shows.

%
%
%
%
%
%
%
%

\begin{example}
	There exists  a terminal 3-fold $X$ with a $K_X$-negative extremal face $F \subset \overline{NE}(X)$ generated by two extremal rays $R_1$, $R_2$ such that the contraction of $F$ is divisorial, but the contractions associated to $R_1$ and $R_2$ are both small (see \cite[Example 3.1.9]{Matsukibook} for an explicit example of this kind).
	In this case the morphism $f: X \to Y$ associated to $F$ can not be decomposed into an elementary divisorial contraction followed by a small contraction. 
\end{example}

 \section{Moduli spaces as ample models}\label{Sec:ampl-mod}

 In this section we are interested in determining $\bbQ$-divisors $L$ on $\MM_{g,n}$ (resp. on $\MM_{g,n}^{\ps}$) such that the variety $\MM_{g,n}^T$ is the projectivization of the ring of sections of $L$, i.e. 
 \begin{equation}\label{E:Projring}
 \MM_{g,n}^T= \Proj \bigoplus_{m \ge 0} H^0(\MM_{g,n}, \lfloor mL \rfloor) \quad (\text{resp. } \MM_{g,n}^T= \Proj \bigoplus_{m \ge 0} H^0(\MM_{g,n}^{\ps}, \lfloor mL \rfloor)).
 \end{equation}
 
 More precisely, we would like to understand when the morphism $\Upsilon_T:=f_T\circ \Upsilon: \MM_{g,n}\to \MM_{g,n}^T$ (resp. $f_T: \MM_{g,n}^{\ps}\to \MM_{g,n}^T$) is the ample model of $L$. 
 
 Let us recall the definition of ample model for big divisors (see \cite[Section 3.6]{BCHM10} or \cite[Def.\ 2.3]{KKL16}). Let $f:X \dashrightarrow Y$ be a birational map between normal projective varieties. Assume that  $f^{-1}$ does not contract any divisor and let $L$ be a $\bbQ$-Cartier divisor on $X$ such that $f_*L$ is also $\bbQ$-Cartier. The map $f$ is called \emph{$L$-non-positive} if for some common resolution $p:W \to X$ and $q:W \to Y$, we may write
 $$
 p^*L=q^*(f_*L) + E,
 $$
 where $E \ge 0$ is $q$-exceptional. 
 The map  $f:X \dashrightarrow Y$ is called an \emph{ample model} for $L$ if it is $L$-non-positive and $f_*L$ is ample.

 If it exists, an ample model $f: X \dashrightarrow Y$ is unique  and given by
 $$
 Y= \Proj \bigoplus_{m \ge 0} H^0(X, \lfloor mL \rfloor)
 $$
 with the induced natural map (cf.\ \cite[Lemma 3.6.6(1)]{BCHM10} or \cite[Remark 2.4(ii)]{KKL16}). The converse is also true, i.e.\ if $L$ is big and the ring of sections of $L$ is finitely generated then the induced map to its projectivization is the ample model of $L$ (see \cite[Theorem 4.2]{KKL16}). A special case of the above situation is when $L$ is \emph{semiample}, in which case the ample model of $L$ is given by the regular contraction induced by $|mL|$ for $m$ sufficiently divisible (such a morphism is called the regular contraction associated to $L$).

 In dealing with the above questions, we will often restrict ourselves to special $\bbQ$-divisors on $\MM_{g,n}$ (resp. $\MM_{g,n}^{\ps}$). We are going to use freely Corollary \ref{F:PicMM} and Proposition \ref{P:PicMps} throughout this section.

 \begin{definition}\label{D:adj}
 	We say that a $\bbQ$-divisor $L$ on $\MM_{g,n}$ (resp. $\MM_{g,n}^{\ps}$) is \emph{adjoint} if $L = K+ \psi +a\lambda+  \D$, where $K$ is the canonical divisor of stack $\M_{g,n}$ (resp. $\M_{g,n}^{\ps}$),
 	$a \ge 0$ and
 	$$
 	\D= \alpha_{\irr}\delta_{\irr} + \sum_{[i,I]\in T^*_{g,n}}\alpha_{i,I}\delta_{i,I} \quad (\text{resp. } \D= \alpha_{\irr}\delta_{\irr} + \sum_{[1,\emptyset]\neq [i,I] \in T^*_{g,n}}\alpha_{i,I}\delta_{i,I})
 	$$
 	with $0\le \alpha_{\irr} \le 1$ and $0\le \alpha_{i,I} \le 1$. 
 \end{definition}
 
  Using the formula $K=13\lambda-2\delta+\psi$, we can write adjoint $\bbQ$-divisors on $\MM_{g,n}$ (resp. $\MM_{g,n}^{\ps}$) in the following form 
 \begin{equation}\label{E:adj-div}
 \begin{aligned}
 L= (13+a)\lambda -(2-\alpha_{\irr})\delta_{\irr} - \sum_{[i,I]\in T^*_{g,n}}(2-\alpha_{i,I})\delta_{i,I}  \\
 (\text{resp. } L= (13+a)\lambda -(2-\alpha_{\irr})\delta_{\irr} - \sum_{[1,\emptyset]\neq [i,I]\in T^*_{g,n}}(2-\alpha_{i,I})\delta_{i,I}).
 \end{aligned}
 \end{equation}
  
 \begin{remark}\label{R:poladj}
 	 	Given an adjoint $\bbQ$-divisor $L = K+ \psi +a\lambda+  \D$ on $\MM_{g,n}$ as in the above definition, the pair (in the category of DM stacks) 
 	\begin{equation}\label{E:kltpair}
 	(\M_{g,n}, \Delta':=\alpha_{\irr}\Delta_{\irr} + \sum_{\stackrel{[i,I]\in T_{g,n}^*:}{|I|\geq 2\: \text{ if } \: i=0}} \alpha_{i,I}\Delta_{i,I}),
 	\end{equation}
 	is lc (log canonical) since the boundary divisor of $\M_{g,n}$ is a normal crossing divisor and all the coefficients of $\Delta'$ are non-negative and strictly less than or equal to 1. Moreover, the $\bbQ$-line bundle $a\lambda+\sum_{j=1}^n (1-\alpha_{0,\{j\}})\psi_j$ is nef, since $\lambda$ is nef (see \cite[Chap. XIV, Lemma (5.6)]{GAC2}) and $\psi_i$ is  nef for each $i$ by \cite[Chapter XIV, Corollary (5.14)]{GAC2}. 
 	
 	Therefore,  $L$ is a \emph{polarised adjoint $\bbQ$-divisor in the sense of generalized pairs} with respect to the lc pair 
 	$(\M_{g,n}, \Delta')$  and the nef divisor $a\lambda+\sum_{j=1}^n (1-\alpha_{0,\{j\}})\psi_j$, i.e. 
 	$$
 	L=K_{\M_{g,n}}+\Delta'+a\lambda+\sum_{j=1}^n (1-\alpha_{0,\{j\}})\psi_j \, ,
 	$$
 	see \cite{Birkar-Zhang}. Our choice is to use only the term \emph{adjoint divisor} since no confusion can arise.
 	The analogous remark is true for  adjoint $\bbQ$-divisors on $\MM_{g,n}^{\ps}$.
 
 \end{remark}
 
The main result of this section is the following.

\begin{theorem}\label{T:all-one}
Assume that ${\rm char}(k)=0$ and let $L$ be an adjoint $\bbQ$ divisor on $\MM_{g,n}$ as in Definition \ref{D:adj}.
\begin{enumerate}
\item \label{T:all-one1} $L$ is ample if and only if it is F-ample. In this case, we have that  
$$ \MM_{g,n}= \Proj \bigoplus_{m \ge 0} H^0(\MM_{g,n}, \lfloor mL \rfloor).$$
\item \label{T:all-one2}  Assume that $(g,n)\neq (1,1), (2,0)$. Then $L$ is semiample with associated contraction equal to $\Upsilon:\MM_{g,n}\to \MM_{g,n}^{\ps}$
if and only if it is F-nef and the only F-curve on which it is trivial is $C_{\rm ell}$. In this case, we have that 
$$ \MM_{g,n}^{\ps}= \Proj \bigoplus_{m \ge 0} H^0(\MM_{g,n}, \lfloor mL \rfloor).$$
\item \label{T:all-one3} Fix $T\subseteq T_{g,n}$ and assume that $(g,n)\neq (1,1), (2,0), (1,2)$ and that $\alpha_{\irr}\leq \frac{10-a}{12}$.  Then 
 $\Upsilon_*(L)$ is semiample with associated contraction equal to $f_T:\MM_{g,n}^{\ps}\to \MM_{g,n}^T$ if and only if  $\Upsilon^*(\Upsilon_*(L))$ is F-nef and the only F-curves on which it is trivial are the ones whose images in $\MM_{g,n}^{\ps}$ have numerical classes contained in $F_T$. Moreover, in this case the ample model of $L$ is equal to 
$\Upsilon_T=f_T\circ \Upsilon:\MM_{g,n}\to \MM_{g,n}^T$ if we assume furthermore that 
$\displaystyle \alpha_{\irr}\leq \frac{9-a+\alpha_{1,\emptyset}}{12}$.  In particular, we have that 
$$ \MM_{g,n}^{T}= \Proj \bigoplus_{m \ge 0} H^0(\MM_{g,n}, \lfloor mL \rfloor)=\Proj \bigoplus_{m \ge 0} H^0(\MM_{g,n}^{\ps}, \lfloor m\Upsilon_*(L) \rfloor).$$
\end{enumerate}
\end{theorem}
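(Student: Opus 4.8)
The proof runs on two engines. The first is Proposition~\ref{P:F-conjecture}: an adjoint divisor on $\MM_{g,n}$ is nef if and only if it is F-nef. The second is Proposition~\ref{P:1strata}: if $M$ is an adjoint divisor on $\MM_{g,n}^{\ps}$ such that $\Upsilon^*M$ is F-nef and trivial exactly on the F-curves whose image in $\MM_{g,n}^{\ps}$ has class in $F_T$, then $M$ is nef and $M^{\perp}\cap\NEb(\MM_{g,n}^{\ps})=F_T$. To these I add the finite generation of the adjoint ring and the induced Mori chamber decomposition (\cite[Cor. 1.1.5]{BCHM10}), which in characteristic zero identifies $\Proj\bigoplus_m H^0(\lfloor mL\rfloor)$ with the ample model of $L$ and, in particular, makes a nef adjoint divisor semiample with ample model the contraction it defines (using the generalised pair structure of Remark~\ref{R:poladj} and the klt-ness of $\MM_{g,n}^{\ps}$ from \cite[Prop. 3.1(i)]{CTV1}). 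The computational glue is the identity obtained from \eqref{E:adj-div} and Proposition~\ref{P:PicMps}\eqref{P:PicMps3},
\[
L=\Upsilon^*\bigl(\Upsilon_*(L)\bigr)+c\,\delta_{1,\emptyset},\qquad c=9-a+\alpha_{1,\emptyset}-12\alpha_{\irr}=-\,L\cdot C_{\rm ell},
\]
with $\Delta_{1,\emptyset}$ the $\Upsilon$-exceptional divisor (its image is the codimension $\geq2$ cuspidal locus). Here $\alpha_{\irr}\leq\frac{9-a+\alpha_{1,\emptyset}}{12}$ says exactly $c\geq0$, i.e.\ that $\Upsilon$ is $L$-non-positive, while $\alpha_{\irr}\leq\frac{10-a}{12}$ is, via Lemma~\ref{L:forGKM}, precisely what keeps $\Upsilon^*(\Upsilon_*(L))$ non-negative on the F-curves $F([1,\emptyset],[j,J])$, so that Proposition~\ref{P:1strata} applies.

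\textbf{Parts (2) and (3).}
I would prove (3) first and read (2) off as the case $T=\emptyset$. Set $M:=\Upsilon_*(L)$, adjoint on $\MM_{g,n}^{\ps}$. For the ``if'' direction of (3): assuming $\Upsilon^*M$ is F-nef and trivial exactly on the F-curves mapping into $F_T$, Proposition~\ref{P:1strata} gives $M$ nef with $M^{\perp}\cap\NEb(\MM_{g,n}^{\ps})=F_T$; the Mori chamber decomposition makes $M$ semiample, and its associated contraction, contracting exactly the curves in $F_T$, coincides with $f_T\colon\MM_{g,n}^{\ps}\to\MM_{g,n}^T$ by uniqueness of the contraction of a $K$-negative face (Theorem~\ref{T:goodsp}\eqref{T:goodsp2}). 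For the ``only if'' direction: if $M$ is semiample with contraction $f_T$ then $M=f_T^*A'$ with $A'$ ample, so $\Upsilon^*M$ is nef, hence F-nef, and by the projection formula $\Upsilon^*M\cdot C'=M\cdot\Upsilon_*C'$ vanishes on an F-curve $C'$ precisely when its image in $\MM_{g,n}^{\ps}$ has class in $F_T$. Assuming furthermore $c\geq0$: the displayed identity shows $\Upsilon$ is $L$-non-positive with $\Upsilon_*L=M$, and since $M=f_T^*A'$ the composite $\Upsilon_T=f_T\circ\Upsilon$ is $L$-non-positive with $(\Upsilon_T)_*L=(f_T)_*M=A'$ ample; hence $\Upsilon_T$ is the ample model of $L$, and the two $\Proj$ descriptions follow from finite generation together with $H^0(\MM_{g,n},\lfloor mL\rfloor)=H^0(\MM_{g,n}^{\ps},\lfloor mM\rfloor)$ (because $c\,\delta_{1,\emptyset}$ is effective and $\Upsilon$-exceptional). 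For part (2), $F_{\emptyset}=\{0\}$, so Proposition~\ref{P:1strata} forces $M$ ample, and the hypothesis that $C_{\rm ell}$ be the only trivial F-curve forces $L\cdot C_{\rm ell}=0$, i.e.\ $c=0$ and $L=\Upsilon^*M$ is the pullback of an ample divisor along the contraction $\Upsilon$; conversely a $\Upsilon$-semiample $L$ is such a pullback, hence F-nef and trivial on exactly the F-curves contracted by $\Upsilon$, namely $C_{\rm ell}$.

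\textbf{Part (1).}
``Ample $\Rightarrow$ F-ample'' is immediate. For the converse, F-ampleness gives F-nefness, hence nefness by Proposition~\ref{P:F-conjecture}, hence semiampleness; let $\pi\colon\MM_{g,n}\to X$ be the associated contraction. By \cite{GKM} it factors through a forgetful morphism followed by a birational contraction; the forgetful factor is trivial since $L=K+\psi+a\lambda+\D$ meets a fibre of the universal curve positively (the $\psi$-term dominates), so $\pi$ is birational and an isomorphism over $M_{g,n}$. Using the classification of the modular contractions $\Upsilon_T$ together with Lemmas~\ref{L:forGKM} and~\ref{L:int-C}, one checks that a non-trivial such $\pi$ must contract one of the F-curves $C_{\rm ell}$, $F_s([1,\emptyset])$ or $F([\tau,I],[\tau+1,I])$; as $L$ is F-ample it contracts none of them, so $\pi=\id$ and $L$ is ample, with $\Proj\bigoplus_m H^0(\lfloor mL\rfloor)=\MM_{g,n}$.

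\textbf{Main obstacle.}
The genuinely delicate point is the last step of part (1): ruling out that an F-ample adjoint $L$ is trivial on some boundary curve of $\MM_{g,n}$ that is \emph{not} an F-curve, where Proposition~\ref{P:1strata} is unavailable. One must exploit the adjoint shape of $L$, so that $L\cdot C=0$ forces $K_{\MM_{g,n}}\cdot C\leq0$ and hence, by the cone theorem and the geometry of the boundary strata, that the class of $C$ is an F-curve class — essentially the analysis underlying Proposition~\ref{P:F-conjecture} itself. By contrast, the bookkeeping in parts (2)--(3) (tracking which F-curves become trivial after $\Upsilon^*$ via Lemma~\ref{L:forGKM}, and checking the thresholds $\frac{10-a}{12}$ and $\frac{9-a+\alpha_{1,\emptyset}}{12}$) is routine once the displayed identity is in place.
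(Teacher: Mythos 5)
Your treatment of parts (2)–(3) follows essentially the paper's route: Proposition \ref{P:1strata} to pass from F-nefness to nefness with the correct trivial locus, identification of the resulting contraction with $f_T$ as the contraction of the $K$-negative face $F_T$, and the identity $L=\Upsilon^*(\Upsilon_*L)+c\,\delta_{1,\emptyset}$ with $c=-L\cdot C_{\rm ell}\geq 0$ (Remark \ref{R:comp-adj}) to transfer the ample model from $\Upsilon_*(L)$ to $L$; your threshold bookkeeping is correct. Two caveats, though. First, the semiampleness of the nef divisor $M=\Upsilon_*(L)$ should be obtained as in Lemma \ref{L:map-nef}: since $M$ supports exactly the $K$-negative face $F_T$, $mM-K$ is ample for $m\gg 0$ and the cone/base-point-free theorem applies. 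Your appeal to ``the Mori chamber decomposition'' of \cite{BCHM10} is not airtight here, because the pair of Remark \ref{R:poladj} is only lc (coefficients may equal $1$) and the nef part $a\lambda+\sum(1-\alpha_{0,\{j\}})\psi_j$ is not big, so neither \cite[Cor. 1.1.5]{BCHM10} nor the base-point-free theorem applies off the shelf to a merely nef adjoint divisor; Lemma \ref{L:map-nef} is stated before the theorem and can simply be cited. Second, reading part (2) off as the case $T=\emptyset$ of part (3) (i.e.\ of Proposition \ref{P:1strata}\eqref{P:1strata2} and Lemma \ref{L:F-nef}) loses the case $(g,n)=(1,2)$, which part (2) allows but part (3) and Proposition \ref{P:1strata}\eqref{P:1strata2} exclude; the paper instead applies Lemma \ref{L:map-nef}\eqref{L:map-nef1} and Proposition \ref{P:1strata}\eqref{P:1strata1} directly on $\MM_{g,n}$, which do cover $(1,2)$. (Your remark that $\alpha_{\irr}\leq\frac{10-a}{12}$ is automatic once $L\cdot C_{\rm ell}=0$ is correct and needed for your reduction, but it does not repair this exclusion.)

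Part (1), however, contains a genuine gap. You only use the ``F-nef $\Rightarrow$ nef'' half of Proposition \ref{P:F-conjecture} and then try to upgrade via ``nef $\Rightarrow$ semiample $\Rightarrow$ the contraction is trivial''. Both steps fail as written: nefness of an adjoint divisor does not give semiampleness here (same lc/non-big obstruction as above, and no $K$-negative face is available), and the claim that any non-trivial contraction of $\MM_{g,n}$ restricting to an isomorphism on $M_{g,n}$ must contract one of $C_{\rm ell}$, $F_s([1,\emptyset])$ or $F([\tau,I],[\tau+1,I])$ has no proof — the morphisms $\Upsilon_T$ are some contractions, not a classification (that is precisely the open Question \ref{Q:2}), and a priori the associated contraction could be trivial on every F-curve while contracting a non-F-curve, which is exactly the difficulty you acknowledge in your ``main obstacle'' paragraph without resolving it. The fix is that no detour is needed: Proposition \ref{P:F-conjecture}\eqref{P:F-conjecture1} is stated (and proved, via the ``resp.\ positive/ample'' versions of Lemma \ref{L:GKMmap} and \cite[Cor. 4.3]{GKM}) in the form ``F-ample $\Rightarrow$ ample'', and this single citation is the paper's entire proof of part (1), the $\Proj$ description being immediate for an ample divisor.
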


The intersection-theoretic conditions appearing in the above theorem will be translated into explicit numerical conditions for the coefficients of $L$ in Lemmas \ref{L:Fnef} and  \ref{L:F-nef}. The following remark describes explicitly the F-curves appearing in part \eqref{T:all-one3} of the theorem.

  \begin{remark}\label{R:Fcur-T}
Fix $T\subseteq T_{g,n}$ and assume that $(g,n)\neq (1,1), (2,0), (1,2)$. 
Since the relative Mori cone $\NE(\Upsilon_T)$  is equal to $\mathrm{Ker}({\Upsilon_T}_*) \cap \NE(\MM_{g,n})$ and the relative Mori cone $\NE(f_T)$ is equal to  $\mathrm{Ker}({f_T}_*) \cap \NE(\MM_{g,n}^{\ps})$ , we have that $\NE(\Upsilon_T)=\Upsilon_*^{-1}(F_T) \cap \NE(\MM_{g,n})$.

Then the only F-curves of $\MM_{g,n}$ whose images in $\MM_{g,n}^{\ps}$ have numerical classes contained in $F_T$, or equivalently such that their numerical class belong to $\NE(\Upsilon_T)$, are (in the notation of Section \ref{S:prel}):
	\begin{itemize}
	\item $C_{\rm ell}$, 
	\item $F_s([1,\emptyset])$ if $\irr \in T$ and $g\geq 2$,
	\item  $F([\tau,I],[g-\tau-1,I^c])$ for every $\{[\tau,I], [\tau+1,I]\}\subseteq T\setminus \{[1,\emptyset]\}$.  
	\end{itemize}
This follows from an inspection of the list of F-curves (see \S \ref{S:prel}) using that an integral curve of $\MM_{g,n}$ has numerical class contained in $\NE(\Upsilon_T)$ if and only if it is either contracted by $\Upsilon$ or its image via $\Upsilon$ is an elliptic bridge curve of type contained in $T$ by \cite[Lemma 3.9(ii)]{CTV1}.
 \end{remark}

The proof of the above results will be divided in a few steps; we start off with the following remark clarifying  the relation between adjoint $\bbQ$-divisors on $\MM_{g,n}$ and on $\MM_{g,n}^{\ps}$, and their ample models.

 \begin{remark}\label{R:comp-adj}
 Assume that $(g,n)\neq (1,1), (2,0)$.
 	\noindent 
 	\begin{enumerate}[(i)]
 		\item \label{R:comp-adj1} If $L= K+\psi+a\lambda+\Delta$ is an adjoint $\bbQ$-divisors on $\MM_{g,n}$, then 
 		$$\Upsilon_*(L)=K+\psi+a\lambda+\Upsilon_*(\Delta)$$
 		is an adjoint $\bbQ$-divisor on $\MM_{g,n}^{\ps}$ and, from Proposition \ref{P:PicMps}\eqref{P:PicMps3},  we have that
		\begin{equation}\label{E:push-pull}
	L=\Upsilon^*(\Upsilon_*(L))+ (9+\alpha_{1,\emptyset}-a-12\alpha_{\irr}) \delta_{1,\emptyset}.
	\end{equation}
	Note that $L$ and $\Upsilon_*(L)$ have the same ample model if and only if $\Upsilon$ is $L$-non-positive, which happens if and only if 
	$$\alpha_{\irr}\leq \frac{9-a+\alpha_{1,\emptyset}}{12}.$$
It will be useful to notice that, since $\alpha_{1,\emptyset}\leq 1$, we also have 
$$ \frac{9-a+\alpha_{1,\emptyset}}{12}\leq \frac{10-a}{12}.$$

 		\item \label{R:comp-adj2}  If $L= K+\psi+a\lambda+\Delta$ is an adjoint $\bbQ$-divisors on $\MM_{g,n}^{\ps}$, then using Proposition \ref{P:PicMps}\eqref{P:PicMps3} and \eqref{E:adj-div} we get (for any $\beta\in \bbQ$):
 		\begin{equation}\label{E:pull-adj}
 		\begin{aligned}
 		& \Upsilon^*(L)+\beta\delta_{1,\emptyset}=(13+a)\lambda -(2-\alpha_{\irr})\delta_{\irr} - \sum_{[1,\emptyset]\neq [i,I]\in T^*_{g,n}}(2-\alpha_{i,I})\delta_{i,I}-(11-12\alpha_{\irr}-a-\beta)\delta_{1,\emptyset}= \\
 		& = K+ \psi +a\lambda+ \alpha_{\irr}\delta_{\irr} + \sum_{[1,\emptyset]\neq [i,I] \in T^*_{g,n}}\alpha_{i,I}\delta_{i,I} + (12\alpha_{\irr}+a+\beta-9)\delta_{1,\emptyset}.
 		\end{aligned}
 		\end{equation}
 		In particular, we deduce that 
 		\begin{itemize}
 			\item $\Upsilon^*(L)$ is of adjoint type if and only if $\frac{9-a}{12}\leq \alpha_{\irr} \le \frac{10-a}{12}$;
 			\item there exists $\beta\geq 0$ such that $\Upsilon^*(L)+\beta\delta_{1,\emptyset}$ is of adjoint type if and only if $\alpha_{\irr}\le \frac{10-a}{12}$.
 		\end{itemize}
 	\end{enumerate}
	Note that $\Upsilon^*(L)+\beta\delta_{1,\emptyset}$ and $L=\Upsilon_*(\Upsilon^*(L)+\beta\delta_{1,\emptyset})$ have the same ample model if and only if $\Upsilon$ is $(\Upsilon^*(L)+\beta\delta_{1,\emptyset})$-non positive, which happens if and only if $\beta\geq 0$. 
 \end{remark}

An important property of adjoint divisors on $\MM_{g,n}$ or  on $\MM_{g,n}^{\ps}$ is that they fulfil the expectations of the F-conjecture in the following sense.

\begin{proposition}\label{P:F-conjecture}
Assume ${\rm char}(k) =0$.
\begin{enumerate}
\item \label{P:F-conjecture1} Let $L$ be an adjoint $\bbQ$-divisor on $\MM_{g,n}$. If $L$ is F-ample (resp.\ F-nef) then $L$ is ample (resp.\ nef).
\item \label{P:F-conjecture2} Assume that $(g,n)\neq (1,1), (2,0)$ and let $L$ be an adjoint $\bbQ$-divisor on $\MM_{g,n}^{\ps}$ such that $\alpha_{\irr}\leq \frac{10-a}{12}$. If $\Upsilon^*(L)$ is F-nef then $\Upsilon^*(L)$ is nef (and hence $L$ is nef). 
\end{enumerate}
\end{proposition}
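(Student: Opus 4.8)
The plan is to prove part (1) by reducing the nefness of an $F$-nef adjoint divisor to the boundary and inducting on the dimension of the moduli space, and then to deduce part (2) from part (1) via the comparison of adjoint divisors on $\MM_{g,n}$ and on $\MM_{g,n}^{\ps}$ recorded in Remark \ref{R:comp-adj}. It is enough to treat the $F$-nef case of (1): the $F$-ample statement follows from the same argument with all inequalities made strict.

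For (1), the engine is the structural reduction of Gibney--Keel--Morrison \cite{GKM}, which for $g\ge 1$ reduces the nefness of a divisor on $\MM_{g,n}$ to the $F$-curve inequalities together with the nefness of its restrictions to the irreducible boundary divisors. The key point is that the class of adjoint divisors is stable under these restrictions. Writing $L=K_{\M_{g,n}}+\psi+a\lambda+\Delta'$ and pulling back along the gluing morphisms $\M_{h,S\cup\{\star\}}\times\M_{g-h,S^{c}\cup\{\bullet\}}\to\M_{g,n}$ and $\M_{g-1,n+2}\to\M_{g,n}$, one uses $K_{\M_{g,n}}=13\lambda-2\delta+\psi$ and the standard adjunction formulae for $\lambda$, the $\psi_{i}$ and the boundary classes to check that $L$ restricts on each factor to a divisor which is again of adjoint type: the Hodge term vanishes on a genus-zero factor, and the cotangent classes at the gluing points absorb precisely the discrepancy between the canonical class of the factor and the restriction of $K_{\M_{g,n}}$. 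Since the $F$-curves of the factors push forward to $F$-curves of $\M_{g,n}$, $F$-nefness is inherited as well, so one can induct on $\dim\M_{g,n}=3g-3+n$, the base cases being the finitely many moduli spaces of dimension at most one (where every effective divisor is nef) and the low-dimensional spaces such as $\MM_{0,5}$ and $\MM_{1,2}$, for which the statement is an elementary check with the intersection table of Lemma \ref{L:forGKM}.

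The step that is genuinely not formal — and which I expect to be the main obstacle in a self-contained write-up — is the genus-zero case: for $\MM_{0,N}$ the restriction procedure only produces products of smaller $\MM_{0,*}$, so one must know separately that an $F$-nef adjoint divisor on $\MM_{0,N}$ is nef. Such a divisor has the form $2\psi-\sum_{S}(2-\alpha_{S})\delta_{S}$ with every coefficient $2-\alpha_{S}$ in $[1,2]$, and here I would appeal to the known results on the $F$-conjecture for divisors of this type on $\MM_{0,N}$ (Hassett's weighted pointed stable curves together with the semiampleness criteria of Fedorchuk), which is exactly where the genus-zero theory has to be fed in; everything else is bookkeeping with the adjunction formulae.

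For (2), let $L$ be adjoint on $\MM_{g,n}^{\ps}$ with $\alpha_{\irr}\le\frac{10-a}{12}$ and assume $\Upsilon^{*}(L)$ is $F$-nef. If $\frac{9-a}{12}\le\alpha_{\irr}\le\frac{10-a}{12}$, then by Remark \ref{R:comp-adj}(ii) the divisor $\Upsilon^{*}(L)$ is itself adjoint on $\MM_{g,n}$, so part (1) makes it nef, and then $L$ is nef because $\Upsilon$ is surjective (a multiple of any curve of $\MM_{g,n}^{\ps}$ is the pushforward of its strict transform). If $\alpha_{\irr}<\frac{9-a}{12}$, then by the same remark $\Upsilon^{*}(L)=N-\beta\delta_{1,\emptyset}$ with $N$ adjoint on $\MM_{g,n}$ and $\beta>0$; since $\delta_{1,\emptyset}$ is the irreducible elliptic-tail exceptional divisor of $\Upsilon$, whose restriction to any other boundary divisor of $\MM_{g,n}$ is again an elliptic-tail divisor on the corresponding factor and is zero on the genus-zero strata, the divisor $\Upsilon^{*}(L)$ restricts on each boundary divisor to one of the same shape, namely an adjoint divisor minus a non-negative multiple of an elliptic-tail divisor. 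Hence the induction of part (1) applies verbatim to this slightly larger class of divisors, the extra intersection numbers being bounded using Lemma \ref{L:int-C} and the $F$-nefness of $\Upsilon^{*}(L)$; this shows $\Upsilon^{*}(L)$, hence $L$, is nef.
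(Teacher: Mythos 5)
There is a genuine gap, and it sits exactly at the point you yourself flag as ``genuinely not formal'': the genus-zero input. After the GKM-type reduction, what you need is that an F-nef divisor on $\MM_{0,N}$ of the shape you obtain (canonical plus a boundary divisor with coefficients in $[0,1]$ plus a non-negative combination of $\psi$-classes) is nef, and your appeal to Hassett's weighted spaces and Fedorchuk's semiampleness criteria does not deliver this statement in the required generality; the F-conjecture on $\MM_{0,N}$ is open and those results cover only special divisor classes. The paper closes this gap differently: it pulls $L$ back along the single gluing morphism $f:\MM_{0,g+n}\to\MM_{g,n}$ of \eqref{E:fGKM} (rather than inducting over boundary restrictions), computes in Lemma \ref{L:GKMmap} that $f^*L=K_{\MM_{0,g+n}}+\Delta+N$ with $\Delta$ a boundary divisor with coefficients in $[0,1]$ and $N$ a nef sum of $\psi$-classes, and then invokes Keel--McKernan \cite[Thm.~1.2(2)]{KM13}: any extremal ray of $\overline{NE}(\MM_{0,g+n})$ on which $K+\Delta$ is non-positive is spanned by an F-curve, so negativity of $f^*L$ on an extremal ray would contradict F-nefness via the projection formula. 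Combining this with \cite[Cor.~4.3]{GKM} gives part (1). This Keel--McKernan input is the essential non-formal ingredient, and without it (or an equivalent) your induction has no base case; note also that your inductive claim that adjointness is preserved under all boundary restrictions is asserted, not checked, and is not how \cite{GKM} is actually applied here.

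Your part (2) has a second gap in the regime $\alpha_{\irr}<\frac{9-a}{12}$: there $\Upsilon^*(L)$ is an adjoint divisor minus a \emph{positive} multiple of $\delta_{1,\emptyset}$, and you propose to rerun the (already incomplete) induction ``verbatim'' for this larger class, which is not an argument --- subtracting an effective divisor is the wrong direction for nefness and nothing in Lemma \ref{L:int-C} substitutes for a proof. The paper avoids the case split entirely: the hypotheses \eqref{E:ipoGKM} of Lemma \ref{L:GKMmap} only require the $\delta_{1,\emptyset}$-coefficient (and the $\delta_{0,\{k\}}$-coefficients) to be at most $1$, with no lower bound, precisely because $f^*\delta_{1,\emptyset}=-\sum_{j>n}\psi_j$, so a coefficient $1-\alpha_{1,\emptyset}\ge 0$ multiplies a nef class; the single condition $\alpha_{\irr}\le\frac{10-a}{12}$ from Remark \ref{R:comp-adj} is exactly what makes $\Upsilon^*(L)$ satisfy these hypotheses, and then the same lemma plus \cite[Cor.~4.3]{GKM} yields nefness of $\Upsilon^*(L)$, hence of $L$, in one stroke.
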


In proving the above Proposition, a crucial role is played by  the morphism (studied in \cite{GKM})
\begin{equation}\label{E:fGKM}
f: \MM_{0,n+g} \to \MM_{g,n}
\end{equation}
given by gluing $g$ copies of the pointed rational elliptic curve at the last $g$ marked points of a curve in $\MM_{0,n+g}$.
We will need the following Lemma, which is based on a result of Keel-McKernan \cite[Thm. 1.2]{KM13} characterising certain extremal rays of the Mori cone of $\MM_{0,N}$. 

\begin{lemma}\label{L:GKMmap}
Assume ${\rm char}(k)=0$.
Let $L$ be a $\bbQ$-divisor on $\MM_{g,n}$ of the form 
$$L = K_{\M_{g,n}}+ \psi +a\lambda+  \alpha_{\irr}\delta_{\irr} + \sum_{[i,I]\in T^*_{g,n}}\alpha_{i,I}\delta_{i,I}$$
with
\begin{equation}\label{E:ipoGKM}
\begin{sis}
& 0\le \alpha_{i,I} \le 1 \quad \text{for any  } [i,I]\neq [1,\emptyset] \:  \text{and  }[i,I]\neq [0,\{k\}] \text{ with } 1\leq k \leq n,\\
& \alpha_{1,\emptyset}\leq 1, \\
& \alpha_{0,\{k\}}\leq 1  \text{ for any  } 1\leq k \leq n.
\end{sis}
\end{equation}
If $L$ is non-negative (resp. positive) on all the F-curves of $\MM_{g,n}$ of type (6) (see \S\ref{S:prel}), then $f^*(L)$ is nef (resp. ample).
\end{lemma}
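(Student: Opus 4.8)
## Proof plan for Lemma \ref{L:GKMmap}

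The strategy is to pull $L$ back along $f:\MM_{0,n+g}\to \MM_{g,n}$, compute the class of $f^*(L)$ on $\MM_{0,n+g}$, and then apply the criterion of Keel--McKernan. The point is that the Keel--McKernan theorem \cite[Thm. 1.2]{KM13} gives a good hold on which one-dimensional strata control nefness on $\MM_{0,N}$: roughly, an effective divisor $D$ on $\MM_{0,N}$ that is non-negative on the F-curves (the vital curves) and that is "effective with respect to the boundary" in a suitable sense is automatically nef. The F-curves of type (6) on $\MM_{g,n}$ — those built entirely from a moving $\MM_{0,4}$ attached to four fixed pieces with no further structure — are exactly the ones that map, under $f$, onto the F-curves of $\MM_{0,n+g}$ that are not contracted, so the numerical hypothesis on type-(6) curves is precisely what is needed to feed into the $\MM_{0,N}$ criterion.

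\textbf{Step 1: Compute $f^*(L)$.} First I would recall (from \cite{GKM}, where the pullback formulas for the tautological classes $\lambda,\psi_i,\delta_{\irr},\delta_{i,I}$ along $f$ are worked out) the expression of $f^*(L)$ as a combination of boundary classes $\delta_{0,S}$ on $\MM_{0,n+g}$. Since $f$ glues $g$ fixed elliptic tails, $f^*\lambda=0$ and $f^*\psi_k$ for $k\le n$ is the corresponding $\psi$ on $\MM_{0,n+g}$ (which on $\MM_{0,N}$ is itself a nonnegative combination of boundary divisors), while $f^*\delta_{\irr}$ and $f^*\delta_{i,I}$ become explicit boundary combinations; the elliptic-tail attaching points contribute the classes $-\psi$ at those marked points (which is where the adjoint normalization $K+\psi$ matters). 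The upshot is that $f^*(L)$ can be written as $K_{\MM_{0,n+g}} + \sum_S c_S \delta_{0,S}$, i.e. as an adjoint-type divisor on $\MM_{0,n+g}$, and the coefficients $c_S$ are controlled by the hypotheses \eqref{E:ipoGKM} on the $\alpha$'s: one checks that all the relevant boundary coefficients lie in the range that makes the divisor a boundary with coefficients $\le 1$ (this is the computational heart, but it is a finite bookkeeping exercise and I would only sketch the shape of the answer, deferring to \cite{GKM} for the individual pullbacks).

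\textbf{Step 2: Identify the F-curves.} Next I would match up the vital curves of $\MM_{0,n+g}$ with their images under $f$: each vital curve $C$ of $\MM_{0,n+g}$ maps either to a point (when the moving $\mathbb P^1$-with-4-points degenerates in a way absorbed by the gluing, which happens only when the partition separates two of the glued elliptic tails in a degenerate way) or isomorphically onto an F-curve of $\MM_{g,n}$; and in the latter case that F-curve is, by the explicit list in \S\ref{S:prel}, of type (6) (or of lower type, which degenerates from type (6)). Hence $f^*(L)\cdot C = L\cdot f_*C \ge 0$ for every vital curve $C$, with strict inequality in the ample case, using precisely the hypothesis that $L$ is non-negative (resp. positive) on type-(6) F-curves. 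For the curves contracted by $f$ one checks directly that $f^*(L)\cdot C=0$, which is consistent with and does not obstruct nefness.

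\textbf{Step 3: Apply Keel--McKernan.} Finally, having written $f^*(L)=K_{\MM_{0,n+g}}+\Delta$ with $\Delta$ an effective boundary $\mathbb Q$-divisor with coefficients in $[0,1]$, and knowing $f^*(L)$ is non-negative on all vital curves, I would invoke \cite[Thm. 1.2]{KM13} to conclude that $f^*(L)$ is nef. For the ample statement, either Keel--McKernan gives ampleness directly under the strict inequality, or one argues that an adjoint divisor on $\MM_{0,N}$ which is positive on all vital curves is ample (using that on $\MM_{0,N}$ nef plus strictly positive on the generating extremal rays of $\NEb$ — which are generated by vital curves by Keel--McKernan — implies ample by Kleiman).

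\textbf{Main obstacle.} The genuinely delicate point is Step 1 together with the precise range-checking: one must verify that the boundary coefficients of $f^*(L)$ on $\MM_{0,n+g}$ really do stay $\le 1$ (so that $f^*(L)$ is of the adjoint/lc type to which Keel--McKernan applies), and this is exactly why the hypotheses \eqref{E:ipoGKM} are stated the way they are — in particular why $\alpha_{1,\emptyset}$ and the $\alpha_{0,\{k\}}$ are only required to be $\le 1$ (no lower bound) while the other $\alpha_{i,I}$ are in $[0,1]$. Getting these bookkeeping inequalities exactly right, and correctly handling the contracted vital curves, is where the care is needed; the rest is a clean application of the $\MM_{0,N}$ theory.
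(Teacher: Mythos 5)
Your overall strategy is the paper's: pull $L$ back along $f$, match the vital curves of $\MM_{0,g+n}$ with the type-(6) F-curves via the projection formula, and invoke Keel--McKernan. But there is a genuine gap at exactly the point you defer as the ``main obstacle''. You want to write $f^*(L)=K_{\MM_{0,g+n}}+\sum_S c_S\delta_{0,S}$ with all $c_S\in[0,1]$, so that \cite[Thm.\ 1.2]{KM13} applies directly; this is neither carried out nor plausible under the stated hypotheses. Indeed, \eqref{E:ipoGKM} imposes no lower bound on $\alpha_{1,\emptyset}$ or on the $\alpha_{0,\{k\}}$ (this is needed for the later application to pullbacks from $\MM_{g,n}^{\ps}$), so after pulling back, the classes $\psi_i$ ($i\le n$) and $\psi_j$ ($j>n$, coming from $-f^*\delta_{1,\emptyset}$) appear with coefficients $1-\alpha_{0,\{i\}}$ and $1-\alpha_{1,\emptyset}$ that can be arbitrarily large; and even when all $\alpha$'s lie in $[0,1]$, rewriting a $\psi$-class as an effective sum of boundary divisors adds coefficient-one contributions on top of boundary terms that may already have coefficient $1$, so there is no reason the combined coefficients stay $\le 1$. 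The paper's proof avoids this entirely: it writes $f^*(L)=K_{\MM_{0,g+n}}+\Delta+N$, where $\Delta$ is the genuine boundary part with coefficients $\alpha_{|J|,I}\in[0,1]$ and $N$ is a \emph{nef} nonnegative combination of $\psi$-classes, and then argues by contradiction: if $f^*(L)\cdot R<0$ (resp.\ $\le 0$) on an extremal ray $R$, then $(K_{\MM_{0,g+n}}+\Delta)\cdot R\le 0$ because $N\cdot R\ge 0$, so by \cite[Thm.\ 1.2(2)]{KM13} the ray $R$ is spanned by a vital curve, whose image under $f$ is a type-(6) F-curve, contradicting the hypothesis via the projection formula. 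Splitting off the nef $\psi$-part instead of forcing a log canonical boundary is the idea your plan is missing.

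Two further inaccuracies. First, no vital curve of $\MM_{0,g+n}$ is contracted by $f$: the map glues fixed elliptic tails at marked points and contracts nothing (a special point of the moving four-pointed component with label $>n$ simply becomes an elliptic-tail attaching node, and the resulting F-curve is still of type (6), since $(1,\emptyset)$-components are allowed there). Your remark that contracted curves ``do not obstruct nefness'' also overlooks that, had such curves existed, they would have made the ampleness conclusion false. Second, Keel--McKernan does not assert that all extremal rays of $\NEb(\MM_{0,N})$ are generated by vital curves --- that is the open genus-$0$ F-conjecture; the theorem only covers the rays on which the lc adjoint divisor is non-positive, which is precisely why both the nef and the ample statements must be run through the contradiction argument above rather than through Kleiman plus a polyhedral description of the cone.
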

\begin{proof}
Let us first compute $f^*(L)$. First of all, we have that 
\begin{equation}\label{E:pull-can}
f^*(K_{\M_{g,n}} + \psi)= K_{\M_{0,g+n}} + \psi=K_{\MM_{0,g+n}}+\psi. 
\end{equation}
Indeed, by \cite[Chap. XIII, Thm. (7.6) and Thm. (7.15)]{GAC2}, we have the formula  $K+\psi=2\kappa_1-11\lambda$ both on $\M_{g,n}$ and on $\M_{0,g+n}=\MM_{0,g+n}$. Now the pull-back $f^*$ preserves $\kappa_1$ by \cite[Chap. XVII, Lemma 4.38]{GAC2} and it also sends $\lambda$ to zero (and hence it preserves it) because the only moving curves 
in the image of $\lambda$ are rational curves. Hence formula \eqref{E:pull-can} follows. 

Furthermore, using \cite[Chap. XVII, Lemma 4.38]{GAC2} again, we see that 
\begin{equation}\label{E:pull-del}
\begin{sis}
& f^*\delta_{\irr}=\delta_{\irr}=0, \\
&  f^*\delta_{i,I}= \sum_{\substack{J \subseteq \{n+1,\ldots, n+g\}\\ |J|=i}} \delta_{0,I \coprod J} \quad \text{ for any } [i,I] \in T^*_{g,n},
\end{sis}
\end{equation}
 In particular, the only line bundles of the form $f^*\delta_{i,I}$ that are not boundary line bundles of $\MM_{0,g+n}$ are 
 \begin{equation}\label{E:pull-notbd}
 \begin{sis}
 & f^* \psi_i=-f^*\delta_{0,\{i\}}=-\delta_{0,\{i\}}=\psi_i \quad \text{ for any } 1\leq i \leq n, \\
 & f^*\delta_{1,\emptyset} = \sum_{i=n+1}^{g+n} \delta_{0,\{i\}}=-\sum_{i=n+1}^{g+n} \psi_i.
 \end{sis}
 \end{equation}
By putting \eqref{E:pull-can}, \eqref{E:pull-del} and \eqref{E:pull-notbd} together, we get that 
\begin{equation}\label{E:pull-L}
f^*L=K_{\MM_{0,g+n}} +\sum_{\substack{[0,I\coprod J] \in T^*_{0,g+n}:\\2\leq |I|+|J|\leq g+n-2}} \alpha_{|J|,I}  \delta_{0,I \cup J}  +\left[\sum_{i=1}^n (1-\alpha_{0,\{i\}}) \psi_i +(1-\alpha_{1,\emptyset}) \sum_{j=n+1}^{n+g} \psi_j\right], 
\end{equation}
where the elements  $[0,I\coprod J] \in T^*_{0,g+n}$ are written in such a way that $I\subseteq [n]$ and $J\subseteq \{n+1,\ldots, n+g\}$. Now we define the following $\bbQ$-divisors on $\MM_{0,g+n}$
\begin{equation*}
\begin{sis}
& \Delta:=\sum_{\substack{[0,I\coprod J] \in T^*_{0,g+n}:\\2\leq |I|+|J|\leq g+n-2}} \alpha_{|J|,I}  \delta_{0,I \cup J}, \\
& N:=\left[\sum_{i=1}^n (1-\alpha_{0,\{i\}}) \psi_i +(1-\alpha_{1,\emptyset}) \sum_{j=n+1}^{n+g} \psi_j\right], 
\end{sis}
\end{equation*}
so that the expression \eqref{E:pull-L} becomes 
\begin{equation}\label{E:pull-L2}
f^*L=K_{\MM_{0,g+n}}+\Delta+N.
\end{equation}
Note that the hypothesis \eqref{E:ipoGKM} together with the fact that $\psi_i$ is nef for any $1\leq i \leq g+n$ by \cite[Chap. XIV, Cor. (5.14)]{GAC2}, implies that $\Delta$ is a boundary divisor on $\MM_{0,g+n}$ (i.e. a sum of the boundary irreducible components of $\MM_{0,g+n}$ each with coefficient in between $0$ and $1$) and that $N$ is a nef divisor.  

Now suppose by contradiction that $f^*L$ is not nef (resp. not ample). Then there exists an extremal ray $R$ of the Mori cone $\NE_1(\MM_{0,g+n})$ such that 
\begin{equation}\label{E:negL}
f^*L\cdot R<0 \quad (\text{resp. }\leq 0).
\end{equation} 
Using \eqref{E:pull-L2} and the fact that $N$ is nef, both the inequalities \eqref{E:negL} imply  the following inequality  
\begin{equation}\label{E:negD}
(K_{\MM_{0,g+n}}+\Delta)\cdot R  \leq 0.
\end{equation} 
Now we can apply \cite[Theorem 1.2(2)]{KM13} (which needs ${\rm char}(k)=0$) in order to conclude that $R$ is generated by an F-curve $C$ of $\MM_{0,g+n}$. The image $f(C)$ of this F-curve via $f$ will be an F-curve of $\MM_{g,n}$ of type (6), see \S \ref{S:prel}. Now the inequality \eqref{E:negL} together with the projection formula implies 
\begin{equation}\label{E:negL2}
L\cdot f(C)<0 \quad (\text{resp. }\leq 0),
\end{equation} 
and this contradicts the assumption that $L$ intersects non-negatively (resp. positively) all the F-curves of $\MM_{g,n}$ of type (6). 
\end{proof}

\begin{proof}[Proof of Proposition \ref{P:F-conjecture}]

Let us first prove part \eqref{P:F-conjecture1}. Note that the line bundle $L$ satisfies the numerical assumptions \eqref{E:ipoGKM} because it is adjoint on $\MM_{g,n}$ and it  intersects  non-negatively (resp. positively) all the F-curves of type (6) because it is F-nef (resp. F-ample) by assumption. Hence, we can apply  Lemma \ref{L:GKMmap} in order to infer that 
$f^*L$ is nef (resp. ample). This fact, together with the fact that $L$ intersects  non-negatively (resp. positively) all the F-curves of type (1) through (5) because it is F-nef (resp. F-ample) by assumption, implies by \cite[Cor. (4.3)]{GKM} that $L$ is nef (resp. ample). 

The proof of part \eqref{P:F-conjecture2} is similar and it uses the fact that $\Upsilon^*(L)$ satisfies  the numerical assumptions \eqref{E:ipoGKM} by the formula \eqref{E:pull-adj} (with $\beta=0$) using that $L$ is an adjoint line bundle on $\MM_{g,n}^{\ps}$ with  $\alpha_{\irr} \le \frac{10-a}{12}$.
\end{proof}

We now formulate a criterion to check whether a $\bbQ$-divisor (not necessarily adjoint) on $\MM_{g,n}$ (resp. on $\MM_{g,n}^{\ps}$) is semiample with associated contraction equal to $\Upsilon:\MM_{g,n}\to \MM_{g,n}^{\ps}$ (resp. $f_T:\MM_{g,n}^{\ps}\to \MM_{g,n}^T$ for some $T\subseteq T_{g,n}$).

\begin{lemma}\label{L:map-nef}
	Assume ${\rm char}(k) =0$ and $(g,n)\neq (1,1), (2,0)$.
	\begin{enumerate}
	\item \label{L:map-nef1} 
	Let $L$ be a $\bbQ$-divisor on $\MM_{g,n}$. Then $L$ is semiample with associated contraction equal to  $\Upsilon:\MM_{g,n}\to \MM_{g,n}^{\ps}$ if and only if $L$ is nef and trivial only on the curves whose numerical class is in $\bbR_{\geq 0}\cdot [C_{\rm ell}]$.
	\item \label{L:map-nef2}  Let $L$ be a $\bbQ$-divisor on $\MM_{g,n}^{\ps}$  and fix $T\subseteq T_{g,n}$.  Then $L$ is semiample with associated contraction equal to  $f_T:\MM_{g,n}^{\ps}\to \MM_{g,n}^T$ if and only if $L$ is nef and trivial only on the curves whose numerical class is contained in $F_T$, or equivalently if and only if $\Upsilon^*(L)$ is nef and trivial only on $\NE(\Upsilon_T)$.
	\end{enumerate}
	
\end{lemma}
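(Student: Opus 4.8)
The plan is to reduce everything to the following two facts, both of which we may assume: (a) $\Upsilon\colon\MM_{g,n}\to\MM_{g,n}^{\ps}$ is the contraction of the $K$-negative extremal ray $\bbR_{\geq 0}\cdot[C_{\rm ell}]$ of $\NEb(\MM_{g,n})$ (recalled in the introduction, with a proof reference to \cite{CTV1}), and (b) $f_T\colon\MM_{g,n}^{\ps}\to\MM_{g,n}^T$ is the contraction of the $K$-negative face $F_T$ of $\NEb(\MM_{g,n}^{\ps})$, namely Theorem \ref{T:goodsp}(\ref{T:goodsp2}). Since both $\Upsilon$ and $f_T$ are morphisms associated to faces of the Mori cone, a $\bbQ$-divisor factors through such a contraction precisely when it pulls back from an ample divisor on the target, and this is detected numerically by the relevant Mori cone — this is exactly the content of the base-point-free theorem / Cone theorem for the $K$-negative faces in play.

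For part \eqref{L:map-nef1}: if $L$ is semiample with associated contraction $\Upsilon$, then $L=\Upsilon^*(A)$ for some ample $\bbQ$-divisor $A$ on $\MM_{g,n}^{\ps}$, so $L$ is nef, and by the projection formula a curve $C$ has $L\cdot C=0$ iff $\Upsilon_*C=0$ iff $[C]\in\bbR_{\geq 0}\cdot[C_{\rm ell}]$ (using that $\Upsilon$ contracts exactly this ray). Conversely, suppose $L$ is nef and $L^\perp\cap\NEb(\MM_{g,n})=\bbR_{\geq 0}\cdot[C_{\rm ell}]$. Then $L$ is nef, and the face it determines on $\NEb(\MM_{g,n})$ is the extremal ray $\bbR_{\geq 0}\cdot[C_{\rm ell}]$, which is $K$-negative. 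By the Cone Theorem \cite[3.7]{KM} (char $0$) together with finite generation of the section ring of adjoint-type divisors on $\MM_{g,n}$ from \cite{BCHM10}, $L$ is semiample and the contraction it defines is the contraction of that ray, which is $\Upsilon$ by fact (a). (One writes $L\equiv K+\psi+a\lambda+\Delta$ and checks the pair $(\M_{g,n},\Delta')$ from Remark \ref{R:poladj} is klt/lc with the polarizing part nef, so that the log abundance/base-point-free machinery of \cite{BCHM10} applies; alternatively one invokes that $\Upsilon$ is the unique morphism contracting precisely the $C_{\rm ell}$-ray and that $L$, being nef and trivial exactly there, descends.)

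For part \eqref{L:map-nef2}: the argument is parallel, now with the face $F_T$ in place of the ray. If $L$ is semiample with contraction $f_T$, write $L=f_T^*(A)$ with $A$ ample on $\MM_{g,n}^T$; then $L$ is nef and $L^\perp\cap\NEb(\MM_{g,n}^{\ps})=F_T$ by the projection formula, since $f_T$ contracts exactly the curves with class in $F_T$ (by Theorem \ref{T:goodsp}(\ref{T:goodsp2})). Conversely, if $L$ is nef with $L^\perp\cap\NEb(\MM_{g,n}^{\ps})=F_T$, the face it cuts out is $F_T$, which is $K$-negative by Theorem \ref{T:goodsp}(\ref{T:goodsp2}); since $f_T$ is by definition the contraction of this face and the contraction morphism of a fixed face is unique, $L$ is $f_T$-trivial and descends to an ample class on $\MM_{g,n}^T$, i.e.\ $L$ is semiample with contraction $f_T$. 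Finally the equivalence with ``$\Upsilon^*(L)$ is nef and trivial only on $\NE(\Upsilon_T)$'' follows from $\NE(\Upsilon_T)=\Upsilon_*^{-1}(F_T)\cap\NEb(\MM_{g,n})$ (Remark \ref{R:Fcur-T}) together with the projection formula $\Upsilon^*(L)\cdot C=L\cdot\Upsilon_*C$: $\Upsilon^*(L)$ is nef because $\Upsilon$ is a morphism and $L$ is nef, and $\Upsilon^*(L)\cdot C=0$ iff $\Upsilon_*C=0$ or $L\cdot\Upsilon_*C=0$, i.e.\ iff $[C]\in\NE(\Upsilon)$ or $\Upsilon_*[C]\in F_T$, which is exactly $[C]\in\NE(\Upsilon_T)$.

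The main obstacle is making the converse direction fully rigorous: from ``$L$ nef and trivial exactly on the given face'' one wants to conclude semiampleness with the precise morphism, and this needs the $K$-negativity of the face plus a base-point-free/contraction statement. For $\MM_{g,n}$ this is standard (the ray is an honest $K$-negative extremal ray and one invokes the Cone Theorem), but for $\MM_{g,n}^{\ps}$ one must know the relevant singularities are klt — which is \cite[Prop. 3.1(i)]{CTV1} — so that the Cone and base-point-free theorems of \cite{KM, BCHM10} apply, and then identify the resulting contraction with $f_T$ by uniqueness. Care is also needed because $L$ is not assumed adjoint here, so the cleanest route is to argue via the already-established modular contraction $f_T$ (whose existence and face-contracting property are given) rather than re-running MMP from scratch: a nef divisor trivial exactly on $F_T$ is pulled back from $\MM_{g,n}^T$ by the rigidity/descent property of contractions of faces, and being nef and, on the base, positive off the zero locus of $f_T$-fibers, it is in fact ample there, giving semiampleness.
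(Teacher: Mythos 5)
Your ``only if'' directions and the final equivalence in part (2) (projection formula plus surjectivity of $\Upsilon$) agree with the paper. The problem is the converse implication, which is the actual content of the lemma, and there your proposal has a genuine gap. The route you ultimately commit to --- descend $L$ along the contraction via \cite[Thm. 3.7(4)]{KM} and then conclude that the descended class is ample because it is ``nef and positive off the zero locus of $f_T$-fibers'' --- does not work as stated: a nef $\bbQ$-divisor that is strictly positive on every curve need not be ample (Mumford's classical example on a ruled surface), so positivity on curves of the base gives neither ampleness of the descended class nor semiampleness of $L$; one needs positivity on $\NEb\setminus\{0\}$ (Kleiman) and some control of the closed cone, which your argument never supplies. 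Your first suggested route is also inapplicable: you cannot write an arbitrary $\bbQ$-divisor $L$ as $K+\psi+a\lambda+\Delta$ with boundary coefficients in $[0,1]$, and the finite generation results of \cite{BCHM10} are for adjoint-type divisors --- the paper explicitly stresses, right before this lemma, that $L$ is \emph{not} assumed adjoint here (adjointness only enters later, in Proposition \ref{P:1strata}, to reduce to F-curves).

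The paper's proof closes exactly this gap by running the base-point-free argument directly on $L$: since $\bbR_{\geq 0}\cdot[C_{\rm ell}]$ (resp. $F_T$) is a $K$-negative face equal to the relative Mori cone of $\Upsilon$ (resp. $f_T$), and $L$ is nef and vanishes exactly on that face, a compactness argument on a slice of $\NEb$ shows that $mL-K$ is ample for $m\gg 0$ (Kleiman); then the base-point-free theorem, applied on the projective varieties $\MM_{g,n}$ and $\MM_{g,n}^{\ps}$, which have klt singularities in characteristic zero (\cite[Prop. 3.1(i)]{CTV1}), gives that $L$ is semiample, and the induced contraction contracts precisely the $L$-trivial curves, i.e.\ those with class in the face, hence coincides with $\Upsilon$ resp. $f_T$ by uniqueness of contractions. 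Your closing paragraph actually names these ingredients ($K$-negativity, kltness, base-point-freeness, identification by uniqueness), but you then set them aside in favour of the descent-plus-``positive on curves'' shortcut; to make the proof correct you should carry out the $mL-K$ ample / base-point-free step rather than that shortcut.
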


\begin{proof} 
Note first that the relative Mori cone of $\Upsilon$ is equal to $\bbR_{\geq 0}\cdot [C_{\rm ell}]$ by \cite[Prop. 3.5(ii)]{CTV1} and  the relative Mori cone of $f_T$ is equal to $F_T$ by \cite[Thm. 4.1]{CTV1}.  These relative cones are K-negative faces, where $K$ is the canonical divisor of $\MM_{g,n}$ or $\MM_{g,n}^{\ps}$, by Proposition \cite[Prop. 3.5(ii)]{CTV1} and  \cite[Prop. 3.9(i)]{CTV1}.

In Case \eqref{L:map-nef1} of the statement, $L$ is a nef divisor which supports exactly $\bbR_{\geq 0}\cdot [C_{\rm ell}]$, while in Case \eqref{L:map-nef2} $L$ is a nef divisor supporting $F_T$. The result follows hence by the cone theorem \cite[Theorem 3.7]{KM} and its proof. More precisely, one sees that $mL-K$ is ample for $m \gg 0$ and so $L$ is semiample, inducing the desired contraction.

The last equivalence in part \eqref{L:map-nef2} follows from the projection formula together with the fact  that  all the curves in $\MM_{g,n}^{\ps}$ are images of curves in $\MM_{g,n}$ since $\Upsilon$ is surjective and projective. 	
\end{proof}

\begin{remark}\label{R:othercase}
A priori, we could have considered another possibility in the above Lemma, namely  those  $\bbQ$-divisors $L$ on $\MM_{g,n}$ that are semiample with associated contraction $\Upsilon_T=f_T\circ \Upsilon: \MM_{g,n}\to \MM_{g,n}^T$ for some $T\subseteq T_{g,n}$. However, in this case $L=\Upsilon^*(\Upsilon_*(L))$ and $\Upsilon_*(L)$ are semiample with associated contraction equal to $f_T:\MM_{g,n}^{\ps}\to \MM_{g,n}^T$, as in Lemma \ref{L:map-nef}\eqref{L:map-nef2}. 
\end{remark}

We now prove that for adjoint divisors, in each of the cases of Lemma \ref{L:map-nef}, it is enough to check the conditions only on F-curves of $\MM_{g,n}$.  The crucial ingredients are the positivity results proved in \cite{AFS3} for $K_{\M_{g,n}}+\psi+9/11(\delta-\psi)$ on $\MM_{g,n}$ and for $K_{\M_{g,n}^{\ps}}+\psi+7/10(\delta-\psi)$ on $\MM_{g,n}^{\ps}$.

\begin{proposition}\label{P:1strata}
	Assume ${\rm char}(k) =0$ and $(g,n)\neq (1,1), (2,0)$.
	\begin{enumerate}
	\item \label{P:1strata1} 
	Let $L$ be an adjoint $\bbQ$-divisor on $\MM_{g,n}$. Then $L$ is nef and trivial only on the curves whose numerical class is in $\bbR_{\geq 0}\cdot [C_{\rm ell}]$ if and only if $L$ is F-nef and the only F-curve on which it is trivial is $C_{\rm ell}$.

	\item \label{P:1strata2}  Let $L$ be an adjoint $\bbQ$-divisor on $\MM_{g,n}^{\ps}$  and fix $T\subseteq T_{g,n}$.  Assume that $\alpha_{\irr}\leq \frac{10-a}{12}$ and that $(g,n)\neq (1,2)$.
	
	Then $L$ is nef and trivial only on the curves whose numerical class is contained in $F_T$ if and only if $\Upsilon^*(L)$ is F-nef and the only F-curves on which it is trivial are the ones whose images in $\MM_{g,n}^{\ps}$ have numerical classes contained in $F_T$.
	\end{enumerate}
\end{proposition}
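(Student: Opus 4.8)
The plan is to establish the non-trivial ``if'' implications in both parts; the converses are immediate, since a nef divisor is F-nef, and if moreover $L^\perp\cap\NEb(\MM_{g,n})=\bbR_{\geq 0}[C_{\rm ell}]$ (resp.\ $L$ is nef on $\MM_{g,n}^{\ps}$ with $L^\perp\cap\NEb(\MM_{g,n}^{\ps})=F_T$), then an F-curve $C$ with $L\cdot C=0$ has class in $\bbR_{\geq 0}[C_{\rm ell}]$ (resp.\ $\Upsilon_*[C]\in F_T$), which by the explicit list of F-curves in Section \ref{S:prel} together with Remark \ref{R:Fcur-T} forces $C\equiv C_{\rm ell}$ (resp.\ $C$ to be one of the curves listed there). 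The two parts will be handled in parallel, using respectively the \cite{AFS3} positivity of the $9/11$-divisor on $\MM_{g,n}$ and of the $7/10$-divisor on $\MM_{g,n}^{\ps}$.

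For part \eqref{P:1strata1}: if $L$ is adjoint, F-nef and trivial among F-curves only on $C_{\rm ell}$, then $L$ is nef by Proposition \ref{P:F-conjecture}\eqref{P:F-conjecture1} and $L\cdot C_{\rm ell}=0$. Put $D:=K_{\M_{g,n}}+\psi+\tfrac{9}{11}(\delta-\psi)$, which by \cite{AFS3} is nef on $\MM_{g,n}$ with $D^\perp\cap\NEb(\MM_{g,n})=\bbR_{\geq 0}[C_{\rm ell}]$ (its ample model is the contraction to pseudostable curves). The crucial step is to show that $L-cD$ is nef for all sufficiently small rational $c>0$. I would deduce this from \cite[Cor. (4.3)]{GKM}, which reduces nefness on $\MM_{g,n}$ to the nefness of the pull-back under $f\colon\MM_{0,g+n}\to\MM_{g,n}$ together with nonnegativity on the F-curves of types (1)--(5): since $L$ satisfies \eqref{E:ipoGKM} (being adjoint) and is strictly positive on the F-curves of type (6) (as $C_{\rm ell}$ has type (1)), Lemma \ref{L:GKMmap} gives that $f^*L$ is ample, while $f^*D$ is nef, so $f^*(L-cD)$ is ample for small $c$; and on the finitely many numerical classes of F-curves of types (1)--(5) other than $C_{\rm ell}$ one has $L\cdot C>0$ and $D\cdot C\ge 0$, so $(L-cD)\cdot C\ge 0$ for small $c$, while $(L-cD)\cdot C_{\rm ell}=0$. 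Writing $L=(L-cD)+cD$ as a sum of nef divisors, any curve $C$ with $L\cdot C=0$ has $D\cdot C=0$, hence $[C]\in\bbR_{\geq 0}[C_{\rm ell}]$, which is the desired conclusion.

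Part \eqref{P:1strata2} follows the same template, with $\wt L:=\Upsilon^*(L)$ in the role of $L$ and the $7/10$-divisor $E:=K_{\M_{g,n}^{\ps}}+\psi+\tfrac{7}{10}(\delta-\psi)$ in the role of $D$. First, formula \eqref{E:pull-adj} and the hypothesis $\alpha_{\irr}\le\tfrac{10-a}{12}$ show that $\wt L$ satisfies \eqref{E:ipoGKM}; as no F-curve of type (6) maps into $F_T$ (Remark \ref{R:Fcur-T}), $\wt L$ is positive on those, so $f^*\wt L$ is ample (Lemma \ref{L:GKMmap}) and $\wt L$ is nef by \cite[Cor. (4.3)]{GKM}, whence $L$ is nef on $\MM_{g,n}^{\ps}$ and trivial on $F_T$ (which is spanned by elliptic bridge curves that are $\Upsilon$-pushforwards of F-curves killed by $\wt L$). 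By \cite{AFS3}, $E$ is nef with $E^\perp\cap\NEb(\MM_{g,n}^{\ps})$ equal to the full elliptic bridge face $F_{T_{g,n}}$. Arguing exactly as before --- pulling back to $\MM_{g,n}$ and feeding into \cite[Cor. (4.3)]{GKM} the ampleness of $f^*\wt L$, the nefness of $f^*\Upsilon^*E$, the positivity $\wt L\cdot C>0$ on the finitely many type-(1)--(5) F-curves not mapping into $F_T$, and the vanishing $\Upsilon^*E\cdot C=E\cdot\Upsilon_*C=0$ on those mapping into $F_T\subseteq F_{T_{g,n}}$ --- I would conclude that $L-cE$ is nef for $0<c\ll 1$, so that $L=(L-cE)+cE$ gives $L^\perp\cap\NEb(\MM_{g,n}^{\ps})\subseteq F_{T_{g,n}}$. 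Finally, $F_{T_{g,n}}$ is a simplicial cone whose extremal rays are the minimal elliptic bridge curves (\cite[Rmk. 3.10]{CTV1}), each being the $\Upsilon$-pushforward of an F-curve; by hypothesis $L$ kills such a ray exactly when the corresponding type lies in $T$, and since a nonnegative linear functional on a simplicial cone vanishes precisely on the subface spanned by the rays it kills, $L^\perp\cap F_{T_{g,n}}=F_T$; combined with the inclusion above this yields $L^\perp\cap\NEb(\MM_{g,n}^{\ps})=F_T$.

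The main obstacle is the passage from nefness to the \emph{sharp} description of the null locus: this is exactly where the \cite{AFS3} positivity theorems are indispensable (the $9/11$-divisor caps the null locus by $\bbR_{\geq 0}[C_{\rm ell}]$ in part \eqref{P:1strata1}, and the $7/10$-divisor, combined with the simpliciality of the elliptic bridge face from \cite{CTV1}, caps it by $F_T$ in part \eqref{P:1strata2}). A secondary technical point is that \cite[Cor. (4.3)]{GKM} has to be invoked for the non-adjoint auxiliary divisors $L-cD$ and $L-cE$, which is legitimate since that criterion holds for an arbitrary $\bbQ$-divisor on $\MM_{g,n}$; the small excluded cases (notably $(g,n)=(1,2)$ in part \eqref{P:1strata2}) would be dealt with separately using the explicit low-Picard-rank geometry of the spaces involved.
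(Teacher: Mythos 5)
Your proposal is correct and follows essentially the same route as the paper's proof: nefness of $L-cD$ (the paper's $tL-D$ for $t\gg 0$, resp.\ $\Upsilon^*(tL-E)$) is obtained from Lemma \ref{L:GKMmap} together with \cite[Cor. 4.3]{GKM}, and the null locus is then capped by the $9/11$- and $7/10$-divisors of \cite{AFS3} and refined to $F_T$ via Remark \ref{R:Fcur-T}. The only (harmless) deviation is in part (2), where the paper invokes the integral-curve statement of \cite[Thm. 1.2(a)]{AFS3} (every curve killed by the $7/10$-divisor is an elliptic bridge curve) rather than your stronger numerical claim $E^{\perp}\cap\NEb(\MM_{g,n}^{\ps})=F_{T_{g,n}}$; the cited result already suffices for your endgame (it places the class of any curve killed by $L$ in $F_{T_{g,n}}$, after which your cone argument applies), so nothing is lost.
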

Note that the condition $\alpha_{\irr}\leq \frac{10-a}{12}$ appearing in \eqref{P:1strata2} is natural from different point of views by Remark \ref{R:comp-adj} and also quite mild as we will see in Remark \ref{R:simp-ineq}\eqref{R:simp-ineq1}.

\begin{proof}
Note that the only if implications are trivial in all the cases, hence we will focus on the if implication.

Let us first prove \eqref{P:1strata1}. Assume that $L$ is F-nef and the only F-curve on which it is trivial is $C_{\rm ell}$. We want to show that $L$ is nef and trivial only on the curves whose numerical class is in $\bbR_{\geq 0}\cdot [C_{\rm ell}]$. For that purpose, using that $K_{\M_{g,n}}+\psi+\frac{9}{11}(\delta-\psi)$  is a nef divisor on $\MM_{g,n}$ supporting the extremal ray $\bbR_{\geq 0}\cdot [C_{\rm ell}]$  by \cite[Introduction]{AFS3}, it is enough to show that the $\bbQ$-divisor 
 	$$
 	M(t):=tL-\left(K_{\M_{g,n}}+\psi+\frac{9}{11}(\delta-\psi)\right)
 	$$
 	is nef for $t \gg 0$. 
	
Note that $M(t)$ is F-nef for $t\gg 0$ since $L$ is positive on all the F-curves that are different from $C_{\rm ell}$ and $K_{\M_{g,n}}+\psi+\frac{9}{11}(\delta-\psi)$ is zero on 
$C_{\rm ell}$. Using this, it follows  from \cite[Cor. 4.3]{GKM} that $M(t)$ is nef if (and only if) its pull-back via the gluing morphism $f:\MM_{0,g+n}\to \MM_{g,n}$ of \eqref{E:fGKM} is nef. This will follow if we show that $f^*(L)$ is ample. 

In order to show this, we apply Lemma \ref{L:GKMmap}. Since $L$ is an adjoint divisor on $\MM_{g,n}$, it satisfies the numerical assumptions \eqref{E:ipoGKM}. Moreover, $L$ is positive on the F-curves of type (6) by assumption. It follows from Lemma \ref{L:GKMmap} that $f^*(L)$ is ample and we are done.

Let us finally  prove part \eqref{P:1strata2}. Assume that $\Upsilon^*(L)$ is F-nef and the only F-curves on which it is trivial are  the ones whose image in $\MM_{g,n}^{\ps}$ has numerical class contained in $F_T$. We want to show that $L$ is nef and trivial only on the curves whose numerical classes is contained in $F_T$. 
For that purpose, we will show the following 

\un{Claim:} the $\bbQ$-divisor on $\MM_{g,n}^{\ps}$
 	$$
 	tL-\left(K_{\M_{g,n}^{\ps}}+\psi+\frac{7}{10}(\delta-\psi)\right)
 	$$
 	is nef for $t \gg 0$.

Let us show that the claim will prove the desired statement. Indeed, it follows from \cite[Thm. 1.2(a)]{AFS3} that $K_{\M_{g,n}^{\ps}}+\psi+\frac{7}{10}(\delta-\psi)$ is a nef divisor on $\MM_{g,n}^{\ps}$ such the only integral curves  on which it vanishes are the elliptic bridge curves (see Section \ref{S:prel}). Therefore, this fact together with the above claim, imply that $L$ is nef and that the only integral curves on which it is possibly zero are the elliptic bridge curves. However, each elliptic bridge curve of $\MM_{g,n}^{\ps}$ is the  image of an F-curve of $\MM_{g,n}$ and, by the assumption on $\Upsilon^*(L)$, the only ones on which $L$ vanishes are the ones of type contained in $T$. This implies that $L$ is trivial only on the curves whose numerical classes are contained in $F_T$.

Let us now prove the claim.  Since any curve in $\MM_{g,n}^{\ps}$ is the image of a curve in $\MM_{g,n}$ because $\Upsilon$ is projective and surjective, it is enough (and indeed necessary)  to show that the $\bbQ$-divisor on $\MM_{g,n}$ 
 	\begin{equation}\label{E:Lt}
 	N(t):=\Upsilon^*\left(tL-\left(K_{\M_{g,n}^{\ps}}+\psi+\frac{7}{10}(\delta-\psi)\right)\right)
 	\end{equation}
 	is nef for $t\gg 0$. It follows from  \cite[Cor. 4.3]{GKM} that $\Upsilon^*(N(t))$ is nef for $t\gg 0$ if (and only if)
\begin{enumerate}[(a)]
\item \label{prop-a} $\Upsilon^*(N(t))$ is F-nef for $t\gg 0$;
\item \label{prop-b} $f^*(\Upsilon^*(N(t)))$ is nef for $t\gg 0$. 
\end{enumerate}	 
 Let us show that both these two properties hold true, which will conclude our proof.
 
 Property \eqref{prop-a} holds true because, by assumption, $\Upsilon^*(L)$ is F-nef and the only F-curves on which it vanishes are the one whose class belong to $\NE(\Upsilon_T)$, on which also  $K_{\M_{g,n}^{\ps}}+\psi+\frac{7}{10}(\delta-\psi)$ vanishes as recalled above.
 
In order to show property \eqref{prop-b}, it is enough to prove that $f^*(\Upsilon^*(L))$ is ample. With this aim, note that $\Upsilon^*(L)$ satisfies  the numerical assumptions \eqref{E:ipoGKM} by the formula \eqref{E:pull-adj} (with $\beta=0$) using that $L$ is an adjoint line bundle on $\MM_{g,n}^{\ps}$ with  $\alpha_{\irr} \le \frac{10-a}{12}$.
Moreover, $\Upsilon^*(L)$ is positive on all the F-curves of type (6) because of our assumptions on $\Upsilon^*(L)$ and the fact that none of these F-curves has numerical class contained in $\NE(\Upsilon_T)$ by Remark \ref{R:Fcur-T}. 
Therefore, we can apply Lemma \ref{L:GKMmap} in order to conclude that $f^*(\Upsilon^*(L))$ is ample, and we are done. 
 \end{proof}

We are now in position to prove our main result.

\begin{proof}[Proof of Theorem \ref{T:all-one}]

The only if part are trivial, we discuss the if ones. 

Part \eqref{T:all-one1} follows from Proposition \ref{P:F-conjecture}\eqref{P:F-conjecture1}.

Part \eqref{T:all-one2} follows by combining Lemma \ref{L:map-nef}\eqref{L:map-nef1} and Proposition \ref{P:1strata}\eqref{P:1strata1}.

Part \eqref{T:all-one3}: the first assertion follows by applying Lemma \ref{L:map-nef}\eqref{L:map-nef2} and Proposition \ref{P:1strata}\eqref{P:1strata2} to $\Upsilon_*(L)$ which is an adjoint $\bbQ$-divisor on $\MM_{g,n}^{\ps}$ (see Remark \ref{R:comp-adj}\eqref{R:comp-adj1}). The second assertion follows from the first one and  Remark \ref{R:comp-adj}\eqref{R:comp-adj1}. 
\end{proof} 

\subsection{Explicit numerical conditions}

The aim of this subsection is to translate the intersection-theoretic conditions on the adjoint $\bbQ$-divisors in Propositions \ref{P:F-conjecture} and \ref{P:1strata} into explicit numerical inequalities on their coefficients.

 \begin{lemma}\label{L:Fnef}
An adjoint $\bbQ$-divisor $L$ on $\MM_{g,n}$ as in  Definition \ref{D:adj} is F-ample (resp.   F-nef  and the only F-curve on which it is trivial is $C_{\rm ell}$) if and only if 
the following numerical conditions are verified: 
\begin{enumerate}[(i)]
\item \label{L:Fnef1} (for $g\geq 1$) $\displaystyle \alpha_{\irr}>\frac{9-a+\alpha_{1,\emptyset}}{12}$ (resp. $=$); 
\item \label{L:Fnef2} $\alpha_{\irr}<1+\frac{\alpha_{i,I}}{2}$ for any subset $I\subseteq [n]$ and any index $i$ such that $1\leq i \leq g-1$;
\item \label{L:Fnef3} $\alpha_{i,I}+\alpha_{j,J}-\alpha_{i+j,I \cup J} <2$,
 for any disjoint subsets $I,J\subseteq [n]$ and any indices $0\leq i,j$ such that $i+j\leq g-1$;
\item \label{L:Fnef4} $\alpha_{i,I}+\alpha_{j,J}+\alpha_{k,K} - \alpha_{i+j,I\cup J}-\alpha_{i+k,I \cup K}-\alpha_{j+k,J \cup K} + \alpha_{i+j+k,I\cup J \cup K } <  2,$
for any pairwise disjoint subsets $I, J, K\subseteq [n]$ and any indices $0\leq i, j, k$. 
\end{enumerate}
\end{lemma}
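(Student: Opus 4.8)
The plan is to compute the intersection of $L$ with each of the six types of F-curves listed in Section~\ref{S:prel} by means of the formulas of Lemma~\ref{L:forGKM}, and to read off the desired inequalities directly. First I would put $L=K+\psi+a\lambda+\alpha_{\irr}\delta_{\irr}+\sum_{[i,I]\in T^*_{g,n}}\alpha_{i,I}\delta_{i,I}$ into the normal form \eqref{E:adj-div}, namely
$$
L=(13+a)\lambda-(2-\alpha_{\irr})\delta_{\irr}-\sum_{[i,I]\in T^*_{g,n}}(2-\alpha_{i,I})\delta_{i,I},
$$
so that in the notation of Lemma~\ref{L:forGKM} one has $a\mapsto 13+a$, $b_{\irr}=2-\alpha_{\irr}$ and $b_{i,I}=2-\alpha_{i,I}$ for every class $[i,I]$ (including the classes $[0,\{k\}]$, for which $\delta_{0,\{k\}}=-\psi_k$).

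Substituting these values into the six intersection numbers of Lemma~\ref{L:forGKM} gives, after a short computation,
$$
L\cdot C_{\rm ell}=12\alpha_{\irr}-9+a-\alpha_{1,\emptyset},\qquad L\cdot F(\irr)=2-\alpha_{\irr},\qquad L\cdot F([i,I])=2-\alpha_{i,I},
$$
$$
L\cdot F_s([i,I])=2-2\alpha_{\irr}+\alpha_{i,I},\qquad L\cdot F([i,I],[j,J])=2-(\alpha_{i,I}+\alpha_{j,J}-\alpha_{i+j,I\cup J}),
$$
and $L\cdot F([i,I],[j,J],[k,K])=2-(\alpha_{i,I}+\alpha_{j,J}+\alpha_{k,K}-\alpha_{i+j,I\cup J}-\alpha_{i+k,I\cup K}-\alpha_{j+k,J\cup K}+\alpha_{i+j+k,I\cup J\cup K})$. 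The key observation is that, since all the coefficients $\alpha_{\irr},\alpha_{i,I}$ lie in $[0,1]$, the F-curves of types (2) and (3) automatically satisfy $L\cdot F\ge 1>0$, so they impose no condition whatsoever; in particular they can never be a trivial curve for $L$. Consequently $L$ is F-ample if and only if the four remaining intersection numbers are strictly positive, which is exactly conditions \ref{L:Fnef1} (with strict inequality), \ref{L:Fnef2}, \ref{L:Fnef3} and \ref{L:Fnef4}; and $L$ is F-nef with $C_{\rm ell}$ as its unique trivial F-curve if and only if $L\cdot C_{\rm ell}=0$, i.e.\ equality holds in \ref{L:Fnef1}, while \ref{L:Fnef2}, \ref{L:Fnef3}, \ref{L:Fnef4} continue to hold with strict inequality (the automatic positivity on types (2), (3) being precisely what guarantees that no other F-curve becomes trivial).

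There is no substantial obstacle here; the only points requiring care are bookkeeping ones. First, one must match the ranges of indices in each numerical condition to the exact ranges of parameters for which the corresponding F-curve exists (e.g.\ $1\le i\le g-1$ for $F_s([i,I])$, $i+j\le g-1$ for $F([i,I],[j,J])$, and the existence of $C_{\rm ell}$ only for $g\ge 1$, which is why condition~\ref{L:Fnef1} is stated ``for $g\ge 1$''), and one must keep track of the identification $\delta_{0,\{k\}}=-\psi_k$ in the low-dimensional cases. Second, I would note that any inequality in \ref{L:Fnef3} or \ref{L:Fnef4} involving an undefined label such as $[0,\emptyset]$ is vacuous under the convention $\alpha_{0,\emptyset}=0$, so its inclusion is harmless. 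With these remarks the equivalences follow immediately from the displayed intersection numbers.
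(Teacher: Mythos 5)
Your proposal is correct and follows essentially the same route as the paper's own (very short) proof: write $L$ in the normal form \eqref{E:adj-div}, plug $b_{\irr}=2-\alpha_{\irr}$, $b_{i,I}=2-\alpha_{i,I}$, $a\mapsto 13+a$ into the intersection formulas of Lemma \ref{L:forGKM}, observe that the F-curves of types (2) and (3) are automatically positive because $L$ is adjoint, and read off conditions \eqref{L:Fnef1}--\eqref{L:Fnef4} from $C_{\rm ell}$ and the F-curves of types (4), (5), (6). Your computed intersection numbers and the resulting equivalences (strict positivity for F-ampleness; equality only in \eqref{L:Fnef1} for the "F-nef with $C_{\rm ell}$ the unique trivial F-curve" case) all check out.
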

\begin{proof}
This follows by intersecting $L$, expressed in the form  \eqref{E:adj-div},  with the F-curves and using Lemma \ref{L:forGKM}: the curve $C_{\rm ell}$ gives rise to \eqref{L:Fnef1}, the F-curves of type (2) and (3) give rise to inequalities that are always satisfied because $L$ is adjoint, the F-curves of type (4) (resp. (5), resp. (6)) give  rise to \eqref{L:Fnef2} (resp. \eqref{L:Fnef3}, resp. \eqref{L:Fnef4}). 
\end{proof}

 Some comments on the numerical conditions appearing in  the above Lemma are in order. 
 
\begin{remark}\label{R:com-ineq}
	\noindent 
	\begin{enumerate}[(i)]
		\item  \label{R:com-ineq1} Condition  \eqref{L:Fnef1} implies that 
		$$
		\begin{aligned}
		\alpha_{\irr} >\frac{9-a+\alpha_{1,\emptyset}}{12}\geq \frac{9-a}{12} & \quad \text{ if } L.C_{\rm ell} >0,  \\
		\alpha_{\irr} =\frac{9-a+\alpha_{1,\emptyset}}{12}\in \left[\frac{9-a}{12}, \frac{10-a}{12}\right] &\quad  \text{ if } L.C_{\rm ell}=0. 
		\end{aligned}
		$$
		
	\item \label{R:com-ineq2}
	
		The inequalities  \eqref{L:Fnef2} are always satisfied if either $\alpha_{\irr}\neq 1$ or $\alpha_{i,I}\neq 0$   for any $[i,I]\in T^*_{g,n}$, and the inequalities \eqref{L:Fnef3} are always satisfied if either $\alpha_{i,I}\neq 1$ for any  $[i,I]\in T^*_{g,n}$ or $\alpha_{i,I}\neq 0$ for any  $[i,I]\in T^*_{g,n}$.
		
	In particular, the inequalities  \eqref{L:Fnef2} are always satisfied if $L\cdot C_{\rm ell}=0$.
		\item \label{R:com-ineq3} The inequalities   \eqref{L:Fnef4}  are always satisfied if 
		$$\alpha_{i,I}>\frac{2}{3} \: \text{ for any } [i,I]\in T^*_{g,n}. $$
	\end{enumerate}
\end{remark}

\begin{lemma}\label{L:F-nef}
Assume that $(g,n)\neq (1,1), (2,0), (1,2)$.
	Let $L$ be an adjoint $\bbQ$-divisor on $\MM_{g,n}^{\ps}$  as in  Definition \ref{D:adj} and fix $T\subset T_{g,n}$. Then $\Upsilon^*L$ is F-nef and the only F-curves on which it is trivial are the ones whose numerical classes belong to $\NE(\Upsilon_T)$ if and only if the following numerical conditions are verified:
	\begin{enumerate}[(i)]
		
		\item \label{L:F-nef1} (for $g\geq 2$)  $\alpha_{\irr} \ge \frac{7-a}{10}$ with equality iff $\irr \in T$;
		
		\item \label{L:F-nef2} 
		\begin{enumerate}
			\item  \label{L:F-nef2a}  $\alpha_{i,I}+\alpha_{j,J}-\alpha_{i+j,I \cup J} <2$ for $i+j\leq g-1$, $I\cap J=\emptyset$, 
			\item  \label{L:F-nef2b}  $12\alpha_{\irr} -7 +a\ge \alpha_{i,I} + \alpha_{i+1,I}$ with equality  iff $\{[i,I], [i+1,I]\} \subset T$;
		\end{enumerate}

		\item \label{L:F-nef3} the following inequalities hold for pairwise disjoint $I,J,H\subseteq \{1,\ldots, n\}$ and indices $0\leq i,j,h\leq g$:
		\begin{enumerate}
			\item \label{L:F-nef3a} $\alpha_{i,I}+\alpha_{j,J}+\alpha_{h,H} - \alpha_{i+j,I\cup J}-\alpha_{i+h,I \cup H}-\alpha_{j+h,J \cup H} + \alpha_{i+j+h,I\cup J \cup H } <2$, 
			\item \label{L:F-nef3b} $(\alpha_{i,I}-\alpha_{i+1,I})+(\alpha_{j,J}-\alpha_{j+1,J})+(\alpha_{i+j+1,I\cup J}-\alpha_{i+j,I\cup J}) < 11-(12\alpha_{\irr}+a)$, 
			\item \label{L:F-nef3c} $(\alpha_{i,I}-\alpha_{i+1,I})+(\alpha_{i+2,I}-\alpha_{i+1,I})-\alpha_{2,\emptyset}< 20 -2(12\alpha_{\irr} +a)$,
			\item  \label{L:F-nef3d} $\alpha_{\irr}<
			\begin{cases} 
			\min_{i\leq g-2} \left\{\frac{11-a+\alpha_{i+1,I}-\alpha_{i,I}}{12}, \frac{ 10-a+\frac{\alpha_{2,\emptyset}}{2}}{12}, \frac{\frac{29}{3}-a+\alpha_{2,\emptyset}-\frac{\alpha_{3,\emptyset}}{3}}{12} \right\} & \text{ if } (g,n)\neq (3,0), (4,0)  \\
			 \frac{\frac{19}{2}-a+\frac{3}{4}\alpha_{2,\emptyset}}{12} & \text{ if } (g,n)=(4,0),\\
			\frac{11-a}{12} & \text{ if } (g,n)=(3,0),\\
			\end{cases}$ 
		\end{enumerate}
	\end{enumerate}
	We assume that all the coefficients of the form $\alpha_{k,K}$ appearing in the above inequalities are so that $[1,\emptyset]\neq [k,K]\in T_{g,n}$. 
\end{lemma}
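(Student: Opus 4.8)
The plan is to reduce the claim to the intersection computations already available: the pull-back formulas of Proposition \ref{P:PicMps}\eqref{P:PicMps3}, the intersection numbers with F-curves given by Lemma \ref{L:forGKM}, and the identification of $\NE(\Upsilon_T)$ with the span of the F-curves listed in Remark \ref{R:Fcur-T}. First I would write $\Upsilon^*(L)$ explicitly. Since $L$ is adjoint on $\MM_{g,n}^{\ps}$, by \eqref{E:pull-adj} (with $\beta=0$) we have
$$
\Upsilon^*(L)=K+\psi+a\lambda+\alpha_{\irr}\delta_{\irr}+\sum_{[1,\emptyset]\neq[i,I]\in T^*_{g,n}}\alpha_{i,I}\delta_{i,I}+(12\alpha_{\irr}+a-9)\delta_{1,\emptyset},
$$
which in the form \eqref{E:adj-div} reads $\Upsilon^*(L)=(13+a)\lambda-(2-\alpha_{\irr})\delta_{\irr}-\sum_{[1,\emptyset]\neq[i,I]}(2-\alpha_{i,I})\delta_{i,I}-(11-12\alpha_{\irr}-a)\delta_{1,\emptyset}$. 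So in the notation of Lemma \ref{L:forGKM} the relevant ``$b$-coefficients'' are $b_{\irr}=2-\alpha_{\irr}$, $b_{i,I}=2-\alpha_{i,I}$ for $[i,I]\neq[1,\emptyset]$, and $b_{1,\emptyset}=11-12\alpha_{\irr}-a$, and of course $a_{\lambda}=13+a$.

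Next I would run through the list of F-curves (1)--(6) from Section \ref{S:prel} and compute $\Upsilon^*(L)\cdot C$ using Lemma \ref{L:forGKM}, keeping track via Remark \ref{R:Fcur-T} of which curves must be zero (those in $\NE(\Upsilon_T)$) and which must be strictly positive. Type (1), $C_{\rm ell}$: here $\Upsilon^*(L)\cdot C_{\rm ell}=(13+a)-12b_{\irr}+b_{1,\emptyset}=0$ automatically since $C_{\rm ell}\in\NE(\Upsilon_T)$ always; indeed the coefficient of $\delta_{1,\emptyset}$ was chosen precisely so that $\Upsilon$ contracts $C_{\rm ell}$, giving no condition. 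Type (2), $F(\irr)$ (only for $g\ge3$): the value is $b_{\irr}=2-\alpha_{\irr}>0$ since $\alpha_{\irr}\le1$, always satisfied, and never zero unless $\alpha_{\irr}=1$; but if $\alpha_{\irr}=1$ this F-curve is not in $\NE(\Upsilon_T)$, so we need strict positivity, i.e.\ $\alpha_{\irr}<2$, trivially true — actually we need to be careful, $F(\irr)\notin\NE(\Upsilon_T)$ so positivity is required; since $\alpha_{\irr}\le 1<2$ it holds. Type (3), $F([i,I])$: value $b_{i,I}=2-\alpha_{i,I}>0$, always true, never forced zero. Type (4), $F_s([i,I])$: value $2b_{\irr}-b_{i,I}=2(2-\alpha_{\irr})-(2-\alpha_{i,I})=2-2\alpha_{\irr}+\alpha_{i,I}$ when $[i,I]\neq[1,\emptyset]$, giving condition \eqref{L:F-nef2a}... wait, no: this gives $\alpha_{\irr}<1+\tfrac{\alpha_{i,I}}{2}$, part of what becomes \eqref{L:F-nef2a}/\eqref{L:F-nef3a} after combining with the $[1,\emptyset]$-subtlety; more importantly $F_s([1,\emptyset])$, whose image is the elliptic bridge curve $C(\irr)$, is in $\NE(\Upsilon_T)$ iff $\irr\in T$, and its value is $2b_{\irr}-b_{1,\emptyset}=2(2-\alpha_{\irr})-(11-12\alpha_{\irr}-a)=10\alpha_{\irr}+a-7$. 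So $F_s([1,\emptyset])$ forces exactly \eqref{L:F-nef1}: $10\alpha_{\irr}+a-7\ge0$ with equality iff $\irr\in T$. Type (5), $F([i,I],[j,J])$: value $b_{i,I}+b_{j,J}-b_{i+j,I\cup J}$; when none of the three indices is $[1,\emptyset]$ this is $2-\alpha_{i,I}-\alpha_{j,J}+\alpha_{i+j,I\cup J}$, i.e.\ \eqref{L:F-nef2a}/\eqref{L:F-nef3a}-type; the key case is when $\{[i,I],[j,J]\}=\{[\tau,I],[g-1-\tau,I^c]\}$ so that $i+j=g-1$, $I\cup J=[n]$, hence $b_{i+j,I\cup J}=b_{g-1,[n]}=b_{\irr}$ (note $\delta_{g-1,[n]}=\delta_{1,\emptyset}$... no: $[g-1,[n]]=[1,\emptyset]$ in $T_{g,n}$, so $b_{g-1,[n]}=b_{1,\emptyset}=11-12\alpha_{\irr}-a$); this curve's image is $C([\tau,I],[\tau+1,I])$, in $\NE(\Upsilon_T)$ iff $\{[\tau,I],[\tau+1,I]\}\subseteq T$, and its value is $(2-\alpha_{\tau,I})+(2-\alpha_{\tau+1,I})-(11-12\alpha_{\irr}-a)=12\alpha_{\irr}+a-7-\alpha_{\tau,I}-\alpha_{\tau+1,I}$ (using $\alpha_{g-1-\tau,I^c}=\alpha_{\tau+1,I}$ by the $\sim$-identification), giving \eqref{L:F-nef2b}. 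For type (5) with exactly one of $[i,I]$ equal to $[1,\emptyset]$ one gets \eqref{L:F-nef3c}-type inequalities after substituting $b_{1,\emptyset}$; similarly for the $[i+j,I\cup J]=[1,\emptyset]$ subcase. Type (6), $F([i,I],[j,J],[k,K])$: value $b_{i,I}+b_{j,J}+b_{k,K}-b_{i+j,I\cup J}-b_{i+k,I\cup K}-b_{j+k,J\cup K}+b_{i+j+k,I\cup J\cup K}$; when no index is $[1,\emptyset]$ the $2$'s combine to $2-\alpha_{i,I}-\dots$ giving \eqref{L:F-nef3a}; the various subcases where one or more of the seven labels equals $[1,\emptyset]$ produce the inequalities \eqref{L:F-nef3b}, \eqref{L:F-nef3c}, and \eqref{L:F-nef3d} after substituting $b_{1,\emptyset}=11-12\alpha_{\irr}-a$ (and using $\alpha_{1,\emptyset}$ is not present because $\delta_{1,\emptyset}^{\ps}=0$, but $\delta_{1,\emptyset}$ does appear in the pull-back with coefficient tracked above). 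No type-(6) F-curve lies in $\NE(\Upsilon_T)$ by Remark \ref{R:Fcur-T}, so all type-(6) values must be strictly positive — this is why \eqref{L:F-nef3a}--\eqref{L:F-nef3d} are strict inequalities, whereas \eqref{L:F-nef1} and \eqref{L:F-nef2b} allow equality (corresponding to membership in $T$).

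I would present the proof as: $\Rightarrow$ and $\Leftarrow$ simultaneously, by showing each F-curve's intersection condition is equivalent to the corresponding numbered inequality, organised by F-curve type. The bulk of the work, and the main obstacle, is the careful bookkeeping of the special subcases in types (5) and (6) where one of the boundary labels $[i+j,I\cup J]$ etc.\ equals $[1,\emptyset]$: there the coefficient $b_{1,\emptyset}=11-12\alpha_{\irr}-a$ is \emph{not} of the form $2-\alpha$, so these terms must be handled separately, and the optimisation over the index $i$ (giving the $\min$ in \eqref{L:F-nef3d}) and the genus-dependent exceptions $(g,n)=(3,0),(4,0)$ arise from which triples of indices are actually realizable in a given genus (e.g.\ the term $\alpha_{2,\emptyset}$ and $\alpha_{3,\emptyset}$ only exist when $g\ge4$, resp.\ $g\ge5$, which is why the formula for $\alpha_{\irr}$ branches). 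The exclusion $(g,n)\neq(1,1),(2,0),(1,2)$ is exactly what guarantees (via Remark \ref{R:comp-adj} and Proposition \ref{P:descend}) that $\Upsilon^*(L)$ is well-defined and $\delta_{1,\emptyset}\neq 0$, so that the substitution is meaningful; in particular for $g\le 1$ there is no $\delta_{\irr}$ on $\MM_{g,n}^{\ps}$ and no $F(\irr)$, $F_s(\cdot)$, or $C(\irr)$, which is why condition \eqref{L:F-nef1} is stated only for $g\ge2$. I would finish by noting that since $F(\irr)$, all $F([i,I])$, and the ``generic'' $F_s$, $F(\cdot,\cdot)$, $F(\cdot,\cdot,\cdot)$ not in $\NE(\Upsilon_T)$ automatically have positive intersection whenever the listed inequalities hold (because $0\le\alpha\le1$ forces $2-\alpha\ge1>0$ and the displayed strict inequalities are precisely the binding ones), no further conditions beyond \eqref{L:F-nef1}--\eqref{L:F-nef3d} are needed, and conversely each of them is forced by some F-curve, completing the equivalence.
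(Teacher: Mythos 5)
Your proposal follows essentially the same route as the paper's proof: write $\Upsilon^*(L)$ via \eqref{E:pull-adj} with the special coefficient $b_{1,\emptyset}=11-12\alpha_{\irr}-a$, intersect it with the six types of F-curves using Lemma \ref{L:forGKM}, and use Remark \ref{R:Fcur-T} to decide which intersections must vanish versus be strictly positive; your binding computations ($10\alpha_{\irr}+a-7$ on $F_s([1,\emptyset])$, giving \eqref{L:F-nef1}, and $12\alpha_{\irr}+a-7-\alpha_{\tau,I}-\alpha_{\tau+1,I}$ on the bridge-type (5) curves, giving \eqref{L:F-nef2b}) agree exactly with the paper. The only blemishes are bookkeeping, not method: the type-(5) curves with exactly one index equal to $[1,\emptyset]$ produce the first term of the $\min$ in \eqref{L:F-nef3d} (not \eqref{L:F-nef3c}, which arises from type-(6) curves with two indices equal to $[1,\emptyset]$), and you should also check the type-(3) curve $F([1,\emptyset])$ (present for $g\geq 3$), whose strict positivity is exactly $\alpha_{\irr}<\frac{11-a}{12}$, i.e.\ the $(g,n)=(3,0)$ case of \eqref{L:F-nef3d} and implied by \eqref{L:F-nef3d} otherwise.
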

\begin{proof} 
	In order to prove the Lemma, we need to compute the intersection  of the $\bbQ$-divisor $\Upsilon^*L$ with the  $6$ types of F-curves using Lemma \ref{L:forGKM} and
	the expression \eqref{E:pull-adj}, and check that this intersection is non-negative and zero only on the F-curves described in Remark \ref{R:Fcur-T}.
	
	\begin{enumerate}[(1)]
		\item The intersection of $\Upsilon^*(L)$ with $C_{\rm ell}$ is 0.
		\item The intersection $\Upsilon^*(L)\cdot F(\irr)=2-\alpha_{\irr}$  is  always positive since $\alpha_{\irr}\leq 1$. 
		\item The intersection of $\Upsilon^*(L)$ with $F([i,I])$ (assuming $0\leq i \leq g-2$ and $(i,I)\neq (0,\emptyset)$) is equal to 
		$$\Upsilon^*(L)\cdot F([i,I])=
		\begin{cases}
		2-\alpha_{i,I} & \quad \mbox{ if } [i,I] \ne [1,\emptyset], \\
		(11-12\alpha_{\irr}-a) &  \quad \mbox{ if } [i,I]=[1,\emptyset].
                  \end{cases}
                  $$
		Hence, this intersection is always positive in the first case because $\alpha_{i,I}\leq 1$ by assumption, while in the second case (which can occur only for $g\geq 3$) this happens if and only $\alpha_{\irr}< \frac{11-a}{12}$.
	       This gives rise to condition \eqref{L:F-nef3d} for $(g,n)=(3,0)$ and it is implied by  \eqref{L:F-nef3d} for $(g,n)\neq (3,0)$.

		\item  The intersection of $\Upsilon^*(L)$ with $F_s([i,I])$ (assuming $1\leq i \leq g-2$) is equal to 
		$$\Upsilon^*(L)\cdot F_s([i,I])=
		\begin{cases}
		2(2 - \alpha_{\irr})-( 2-\alpha_{i,I}) & \mbox{ if } [i,I] \ne [1,\emptyset], \\
		2(2 - \alpha_{\irr})-(11-12\alpha_{\irr}-a)  & \mbox{ if } [i,I] = [1,\emptyset].
		\end{cases}
		$$ 
		The first intersection is positive if and only if $\alpha_{\irr} < 1+\frac{\alpha_{i,I}}{2}$ which is implied by   \eqref{L:F-nef3d}. 
		The second intersection is non-negative and zero if and only if $F_s([1,\emptyset])\in \NE(\Upsilon_T)$ (which is equivalent to  $ \irr \in T$ and $g\geq 2$ by Remark \ref{R:Fcur-T}) precisely when \eqref{L:F-nef1} is satisfied. 
	
		\item 		
		By requiring that the intersection of $\Upsilon^*(L)$ with the F-curves $F([i,I],[j,J])$ (for $i+j\leq g-1$ and $I\cap J=\emptyset$) is non-negative and zero only on the F-curves 
		contained in $\NE(\Upsilon_T)$, i.e. those of the form $F([i,I],[i+1,I])$ with  $[i,I],[i+1,I]\in T\setminus \{[1,\emptyset]\}$ by Remark \ref{R:Fcur-T},  we end up with the following six inequalities (the last of which occurs only for $(g,n)=(3,0)$), depending on which indices $[i,I], [j,J], [i+j, I \cup J]$ are equal to $[1,\emptyset]$:
		\begin{align*}
		(2-\alpha_{i,I})+(2-\alpha_{j,J}) > 2-\alpha_{i+j,I \cup J}\ &\Leftrightarrow\ \alpha_{i,I}+\alpha_{j,J}-\alpha_{i+j,I \cup J} <2 \\
		& \quad \text{ for } i+j\leq g-1, I\cap J=\emptyset, \\ 
		(11-12\alpha_{\irr}-a) + (2-\alpha_{i,I}) > 2-\alpha_{i+1,I}\ &\Leftrightarrow\ \alpha_{\irr}< \frac{\alpha_{i+1,I}-\alpha_{i,I} +11-a}{12}\\
		&  \quad \text{ for } i\leq g-2, \\
		(2-\alpha_{i,I})+(2-\alpha_{g-i-1,I^c}) \ge 11-12\alpha_{\irr}-a\ &\Leftrightarrow\ 12\alpha_{\irr}-7+a \ge \alpha_{i,I}+\alpha_{g-i-1,I^c} \\ &\mbox{ with equality iff } \{[i,I], [i+1,I]\} \subset T,\\
		(11-12\alpha_{\irr}-a) + (11-12\alpha_{\irr}-a) > 2-\alpha_{2,\emptyset}\  &\Leftrightarrow\ \alpha_{\irr} < \frac{10-a+\frac{\alpha_{2,\emptyset}}{2}}{12}  \quad \text{ for } g\geq 3,\\
		(11-12\alpha_{\irr}-a)+(2-\alpha_{g-2,[n]}) > 11-12\alpha_{\irr}-a\ &\Leftrightarrow\ \alpha_{g-2,[n]}<2,\\
		(11-12\alpha_{\irr}-a)+ (11-12\alpha_{\irr}-a) > 11-12\alpha_{\irr}-a\ &\Leftrightarrow\ \alpha_{\irr}< \frac{11-a}{12}.\\
		\end{align*}
		The  fifth inequality is always satisfied; the first inequality gives rise to condition  \eqref{L:F-nef2a};  the third inequality gives rise to condition  \eqref{L:F-nef2b}; the sixth inequality gives rise to condition \eqref{L:F-nef3d} for $(g,n)=(3,0)$; the fourth inequality (which cannot  occur  for $(g,n)= (3,0)$) gives rise to the second  inequality  in \eqref{L:F-nef3d} in the   case $(g,n)\neq (3,0), (4,0)$, while it is implied by \eqref{L:F-nef3d} in the case $(g,n)=(4,0)$;  the second inequality (which cannot occur for $(g,n)=(3,0)$ and $(g,n)=(4,0)$) gives rise to the first inequalities in \eqref{L:F-nef3d} for $(g,n)\neq (3,0), (4,0)$. 
		
		\item By requiring that the intersection of $\Upsilon^*(L)$ with the F-curves $F([i,I], [j,J], [k,K])$ is positive, we end up with five inequalities 
		depending on how many indices among $\{[i,I], [j,J], [h, H], [g-i-j-h,[n]\setminus\{I\cup J\cup H\}]\}$ are equal to $[1,\emptyset]$. The case where the number of indices equal to $[1,\emptyset]$ is $0$ (resp.\ $1$, resp.\ $2$) gives rise to conditions \eqref{L:F-nef3a}  (resp.\ \eqref{L:F-nef3b}, resp.\ \eqref{L:F-nef3c}). The case where 
		the number of indices equal to $[1,\emptyset]$ is $3$ (which cannot occur for $(g,n)= (3,0), (4,0)$) gives rise to the third inequality in \eqref{L:F-nef3d} in the case  $(g,n)\neq (3,0), (4,0)$. The case where the 
		 number of indices equal to $[1,\emptyset]$ is $4$ (which can occur only for $(g,n)=(4,0)$) gives rise to the  inequality in \eqref{L:F-nef3d} in the case  $(g,n)= (4,0)$. 
		
	\end{enumerate}		
\end{proof}

Let us comment on the numerical conditions appearing in the above Lemma.  
 
\begin{remark}\label{R:simp-ineq}
	\noindent 
	\begin{enumerate}[(i)]
		\item  \label{R:simp-ineq1} Condition  \eqref{L:F-nef3d} of the above Lemma \ref{L:F-nef} implies that 
		$$\alpha_{\irr}<
		\begin{cases} 
		\frac{10.5-a}{12}& \text{ if } (g,n)\neq (3,0),(4,0),\\
		\frac{10.25-a}{12}& \text{ if } (g,n)= (4,0),\\
		\frac{11-a}{12} & \text{ if } (g,n)=(3,0).\\
		\end{cases}
		$$
		\item \label{R:simp-ineq2}
		The inequalities  \eqref{L:F-nef3b}, \eqref{L:F-nef3c} and \eqref{L:F-nef3d} in the above Lemma simplify under suitable assumptions on $\alpha_{\irr}$ (using that all the coefficients $\alpha_{i,I}$ are such that $0\leq \alpha_{i,I}\leq 1$). More precisely, we have that:
			\begin{itemize}
			\item If $\alpha_{\irr}<  \frac{8-a}{12}$ then \eqref{L:F-nef3b}, \eqref{L:F-nef3c} and \eqref{L:F-nef3d} are always satisfied;
			\item If $\alpha_{\irr}<  \frac{9-a}{12}$ then \eqref{L:F-nef3c} and \eqref{L:F-nef3d} are always satisfied;
			\item If $\alpha_{\irr}<  \frac{\frac{28}{3}-a}{12}$ then \eqref{L:F-nef3d} is always satisfied.	
		\end{itemize}
		\item \label{R:simp-ineq3bis} The inequalities   in \eqref{L:F-nef3} and in \eqref{L:F-nef2a} are always satisfied if 
		$$\alpha_{\irr}\leq  \frac{\frac{29}{3}-a}{12}\quad \text{ and } \quad \vert \alpha_{i,I}-\alpha_{j,J}\vert <\frac{1}{3} \: 
		\text{ for any } [i,I], [j,J] \in T_{g,n}^*\setminus \{[1,\emptyset]\}. $$
		\item \label{R:simpl-ineq4} The inequalities \eqref{L:F-nef2a} are always satisfied if either $\alpha_{i,I}\neq 1$ for any  $[1,\emptyset]\neq [i,I]\in T^*_{g,n}$
 or $\alpha_{i,I}\neq 0$ for any  $[1,\emptyset]\neq [i,I]\in T^*_{g,n}$.
		
	\end{enumerate}
\end{remark}

By using the explicit numerical conditions appearing in the above two Lemmas and the main Theorem \ref{T:all-one}, we get the following corollary which describes a certain region inside the polytope of the adjoint $\bbQ$-divisors on $\MM_{g,n}$ on which we can describe the ample models.

\begin{cor}\label{C:dec-poly}
	Assume that ${\rm char}(k) =0$ and that $(g,n)\neq (1,1), (2,0), (1,2)$. 
	Let $L$ be an adjoint $\bbQ$-divisor on $\MM_{g,n}$
	$$
	L=K+\psi+a\lambda+ \alpha_{\irr}\delta_{\irr} + \sum_{[i,I]\in T^*_{g,n}}\alpha_{i,I}\delta_{i,I}
	$$
	such that $\vert \alpha_{i,I} -\alpha_{j,J}\vert <\frac{1}{3}$ for any $[i,I], [j,J] \in T_{g,n}^*$ and such that if $\alpha_{\irr}=1$ then $\alpha_{i,I}>0$ for any $[i,I]\in T_{g,n}^*$. 
	Assume furthermore that  
	\begin{equation}\label{E:inq1}
	\frac{7-a}{10}\leq \alpha_{\irr} \quad (\text{for } g\geq 2), 
	\end{equation}
	\begin{equation}\label{E:inq2}
	 \frac{7-a+\alpha_{i,I}+\alpha_{i+1,I}}{12}\leq \alpha_{\irr} \text{ for any } [i,I], [i+1,I]\in T^*_{g,n}\setminus \{[1,\emptyset]\}.
	\end{equation}
	Then the ample model of $L$ is 
 \begin{itemize}
        \item $\id:\MM_{g,n}\to \MM_{g,n}$ if $\frac{9-a+\alpha_{1,\emptyset}}{12}<\alpha_{\irr}$;
        \item $\Upsilon: \MM_{g,n}\to \MM_{g,n}^{\ps}$ if $\frac{9-a}{12}<\alpha_{\irr}\leq \frac{9-a+\alpha_{1,\emptyset}}{12}$;
        \item $\Upsilon_T:\MM_{g,n}\to \MM_{g,n}^T$ if $\alpha_{\irr}\leq \frac{9-a}{12} $  where $T$ is admissible and it is uniquely determined by  
        $$
        \begin{aligned} 
        & (\text{for } g\geq 2) \quad \irr \in T \Leftrightarrow \text{ equality holds in } \eqref{E:inq1},\\
        & \{[i,I],[i+1,I]\}\subseteq T \Leftrightarrow \text{ equality holds in } \eqref{E:inq2}.
	\end{aligned}
	$$
 \end{itemize}
	
\end{cor}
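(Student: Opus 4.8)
The plan is to deduce Corollary \ref{C:dec-poly} from the main Theorem \ref{T:all-one}, by carefully checking that, under the standing hypotheses of the corollary, the explicit numerical inequalities of Lemmas \ref{L:Fnef} and \ref{L:F-nef} are all satisfied, so that in each of the three regimes for $\alpha_{\irr}$ the divisor $L$ (or $\Upsilon_*(L)$) falls into one of the cases of Theorem \ref{T:all-one}. The three regimes are cut out by comparing $\alpha_{\irr}$ with the thresholds $\frac{9-a+\alpha_{1,\emptyset}}{12}$ and $\frac{9-a}{12}$, exactly as in the intersection formula $L\cdot C_{\rm ell}=a-12\alpha_{\irr}+(2-\alpha_{1,\emptyset})\cdot(-1)$ rewritten via \eqref{E:adj-div}; I would open by recording that $L\cdot C_{\rm ell}=12\alpha_{\irr}-9+a-\alpha_{1,\emptyset}$ (up to the sign conventions of Lemma \ref{L:forGKM}) so these thresholds have the expected meaning.

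First I would treat the regime $\frac{9-a+\alpha_{1,\emptyset}}{12}<\alpha_{\irr}$. Here $L\cdot C_{\rm ell}>0$, and I must check that $L$ is F-ample, i.e. conditions \eqref{L:Fnef1}--\eqref{L:Fnef4} of Lemma \ref{L:Fnef} hold strictly. Condition \eqref{L:Fnef1} is the hypothesis of this case. For \eqref{L:Fnef2}: by Remark \ref{R:com-ineq}\eqref{R:com-ineq2} this is automatic unless $\alpha_{\irr}=1$ and some $\alpha_{i,I}=0$, which is excluded by the standing assumption of the corollary. For \eqref{L:Fnef4}: by Remark \ref{R:com-ineq}\eqref{R:com-ineq3} it suffices that $\alpha_{i,I}>\frac23$ for all $[i,I]$, but in general I only have $|\alpha_{i,I}-\alpha_{j,J}|<\frac13$; here I would argue instead directly — the telescoping sum in \eqref{L:Fnef4} is $<2$ because, grouping terms, $|\sum(\pm\alpha)|$ is controlled by differences of the $\alpha$'s (at most $\frac13$ each) plus at most one ``full'' coefficient $\le 1$, giving a bound strictly below $2$; the same pairwise-difference bound handles \eqref{L:Fnef3}. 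Then Theorem \ref{T:all-one}\eqref{T:all-one1} gives that the ample model is $\id$.

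Next, the regime $\frac{9-a}{12}<\alpha_{\irr}\le\frac{9-a+\alpha_{1,\emptyset}}{12}$. Here $L\cdot C_{\rm ell}=0$ but $L\cdot F([1,\emptyset])>0$ (since $\alpha_{\irr}<\frac{11-a}{12}$, which follows from $\alpha_{\irr}\le\frac{10-a}{12}$, itself implied by the upper bound and $\alpha_{1,\emptyset}\le1$). I must verify $L$ is F-nef with $C_{\rm ell}$ the unique trivial F-curve: condition \eqref{L:Fnef1} of Lemma \ref{L:Fnef} holds with equality precisely here, \eqref{L:Fnef2} is automatic since $L\cdot C_{\rm ell}=0$ by Remark \ref{R:com-ineq}\eqref{R:com-ineq2}, and \eqref{L:Fnef3}, \eqref{L:Fnef4} are handled as above. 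Then Theorem \ref{T:all-one}\eqref{T:all-one2} applies. In the final regime $\alpha_{\irr}\le\frac{9-a}{12}$ I would pass to $\Upsilon_*(L)$ on $\MM_{g,n}^{\ps}$ and apply Theorem \ref{T:all-one}\eqref{T:all-one3}: I need $\alpha_{\irr}\le\frac{10-a}{12}$ (clear) and $\alpha_{\irr}\le\frac{9-a+\alpha_{1,\emptyset}}{12}$ (clear, since $\alpha_{1,\emptyset}\ge0$) so that the ample model is $\Upsilon_T$, and I must identify $T$ by checking that the conditions of Lemma \ref{L:F-nef} pin down exactly the admissible set $T$ determined by where equality holds in \eqref{E:inq1} and \eqref{E:inq2}. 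Concretely: \eqref{L:F-nef1} is \eqref{E:inq1} with the ``equality iff $\irr\in T$'' clause; \eqref{L:F-nef2b} is \eqref{E:inq2} (rearranged) with the ``equality iff $\{[i,I],[i+1,I]\}\subset T\}$'' clause, where \eqref{E:inq2} read as a genuine inequality for the pairs not forced into $T$ guarantees non-negativity; \eqref{L:F-nef2a} follows from the pairwise-difference bound; and \eqref{L:F-nef3} is automatic once $\alpha_{\irr}\le\frac{9-a}{12}<\frac{\frac{29}{3}-a}{12}$ together with $|\alpha_{i,I}-\alpha_{j,J}|<\frac13$, by Remark \ref{R:simp-ineq}\eqref{R:simp-ineq3bis}--\eqref{R:simp-ineq2}. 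Finally I would invoke that an admissible $T$ is the union of its minimal subsets and that $\M_{g,n}^T=\M_{g,n}^{T^{\adm}}$ (Proposition \ref{P:equaT}) to see $T$ is well-defined and admissible, and note $T$ is unique because the $\MM_{g,n}^T$ are distinguished by which elliptic bridge curves they contract (Theorem \ref{T:goodsp}\eqref{T:goodsp2} and Lemma \ref{L:int-C}).

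The main obstacle I anticipate is the bookkeeping in the last regime: one must simultaneously (a) confirm every inequality of Lemma \ref{L:F-nef} holds, with the right strict-vs-equality behaviour dictated by membership in $T$, and (b) show that the set of pairs (and $\irr$) for which equality holds is automatically \emph{admissible}, i.e.\ closed under the minimal-subset condition — this is where \eqref{E:inq2} being indexed over consecutive pairs $\{[i,I],[i+1,I]\}$ matters, and where one needs that equality in \eqref{E:inq2} for a pair containing $[\tau,I]$ forces, via the structure of $T_{g,n}^*$ and the definition of admissibility, the presence of an adjacent pair. I would isolate this as the key lemma-within-the-proof, and otherwise the argument is a systematic reduction to Theorem \ref{T:all-one} via Lemmas \ref{L:Fnef}, \ref{L:F-nef} and Remarks \ref{R:com-ineq}, \ref{R:simp-ineq}.
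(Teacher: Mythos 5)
There is a genuine gap in your treatment of the middle regime $\frac{9-a}{12}<\alpha_{\irr}\leq \frac{9-a+\alpha_{1,\emptyset}}{12}$. You assert that ``here $L\cdot C_{\rm ell}=0$'' and that condition \eqref{L:Fnef1} of Lemma \ref{L:Fnef} ``holds with equality precisely here'', and you then apply Theorem \ref{T:all-one}\eqref{T:all-one2}. But by Lemma \ref{L:forGKM} and \eqref{E:adj-div} one has $L\cdot C_{\rm ell}=12\alpha_{\irr}-(9-a+\alpha_{1,\emptyset})$, so equality holds only on the boundary facet $\alpha_{\irr}=\frac{9-a+\alpha_{1,\emptyset}}{12}$; in the interior of the slab (nonempty whenever $\alpha_{1,\emptyset}>0$) one has $L\cdot C_{\rm ell}<0$, hence $L$ is not F-nef, is not nef, and is certainly not semiample with associated contraction $\Upsilon$ --- so Theorem \ref{T:all-one}\eqref{T:all-one2} simply does not apply there. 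The statement to be proved in that regime is only that the \emph{ample model} of $L$ is $\Upsilon$, which is weaker than semiampleness of $L$ itself: by the push--pull formula \eqref{E:push-pull} of Remark \ref{R:comp-adj}\eqref{R:comp-adj1}, $L=\Upsilon^*(\Upsilon_*(L))+(9+\alpha_{1,\emptyset}-a-12\alpha_{\irr})\delta_{1,\emptyset}$ with non-negative coefficient exactly when $\alpha_{\irr}\leq\frac{9-a+\alpha_{1,\emptyset}}{12}$, so $\Upsilon$ is $L$-non-positive and $L$ and $\Upsilon_*(L)$ share their ample model. The missing step is then to show that $\Upsilon_*(L)$ is ample on $\MM_{g,n}^{\ps}$, which is what the paper does by verifying the inequalities of Lemma \ref{L:F-nef} with $T=\emptyset$ (conditions \eqref{L:F-nef1} and \eqref{L:F-nef2b} are strict precisely because $\alpha_{\irr}>\frac{9-a}{12}$, and the remaining ones follow from the pairwise-difference bound and the upper bound on $\alpha_{\irr}$) and applying Theorem \ref{T:all-one}\eqref{T:all-one3} with $T=\emptyset$. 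As written, your argument only covers the hyperplane where $L\cdot C_{\rm ell}=0$ and misses the whole interior of the second region.

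The other two regimes are essentially the paper's argument and are fine: in the first regime your grouping of \eqref{L:Fnef3} and \eqref{L:Fnef4} into differences bounded by $\frac{1}{3}$ plus one coefficient $\leq 1$ is exactly what is needed for F-ampleness, and in the third regime your reduction via Remark \ref{R:comp-adj}\eqref{R:comp-adj1} and the verification of Lemma \ref{L:F-nef} (with \eqref{E:inq1}, \eqref{E:inq2} providing the strict-versus-equality dichotomy and Remark \ref{R:simp-ineq}\eqref{R:simp-ineq3bis} disposing of \eqref{L:F-nef2a} and \eqref{L:F-nef3}) matches the paper. One small remark: the admissibility of the set $T$ you single out as a ``key lemma'' is immediate, since $T$ is by construction a union of minimal subsets (namely $\{\irr\}$ for $g\geq 2$ and pairs $\{[i,I],[i+1,I]\}$ with both members in $T^*_{g,n}\setminus\{[1,\emptyset]\}$), and a subset is admissible if and only if it is a union of the minimal subsets it contains; no extra argument is required there.
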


\begin{proof}

We will distinguish several cases.

$\bullet$ Assume that $\frac{9-a+\alpha_{1,\emptyset}}{12}<\alpha_{\irr}$.

Using the above assumption, together with   $\vert \alpha_{i,I} -\alpha_{j,J}\vert <\frac{1}{3}$ for any $[i,I], [j,J] \in T_{g,n}^*$ and the fact that if $\alpha_{\irr}=1$ then $\alpha_{i,I}>0$ for any $[i,I]\in T_{g,n}^*$,  it follows from Lemma \ref{L:Fnef} that $L$ if F-ample. Then Theorem \ref{T:all-one}\eqref{T:all-one1} implies that the ample model of $L$ is the identity morphism.

$\bullet$ Assume that $\frac{9-a}{12}<\alpha_{\irr} \leq \frac{9-a+\alpha_{1,\emptyset}}{12}$.

Remark \ref{R:comp-adj}\eqref{R:comp-adj1} implies that the ample model of $L$ is the same as the ample model of $\Upsilon_*(L)$. Therefore, using Theorem \ref{T:all-one}\eqref{T:all-one3}, it is enough to check that $\Upsilon_*(L)$ satisfies the inequalities of Lemma \ref{L:F-nef} with $T=\emptyset$. Conditions \eqref{L:F-nef1} and \eqref{L:F-nef2b} are satisfied with strict inequalities because of the assumption $\frac{9-a}{12}<\alpha_{\irr}$. The inequalities \eqref{L:F-nef2a} and \eqref{L:F-nef3a} are satisfied because of the assumptions  $\vert \alpha_{i,I} -\alpha_{j,J}\vert <\frac{1}{3}$. 
The inequality \eqref{L:F-nef3b} is satisfied because
$$(\alpha_{i,I}-\alpha_{i+1,I})+(\alpha_{j,J}-\alpha_{j+1,J})+(\alpha_{i+j+1,I\cup J}-\alpha_{i+j,I\cup J}) <1\leq 11-(12\alpha_{\irr}+a)$$
where we used that  $\vert \alpha_{i,I} -\alpha_{j,J}\vert <\frac{1}{3}$ and that $\alpha_{\irr}\leq \frac{10-a}{12}$. 
The inequality \eqref{L:F-nef3c} is satisfied because 
$$(\alpha_{i,I}-\alpha_{i+1,I})+(\alpha_{i+2,I}-\alpha_{i+1,I})-\alpha_{2,\emptyset} < \frac{2}{3}-\alpha_{2,\emptyset}< 2-2\alpha_{1,\emptyset}\leq 20 -2(12\alpha_{\irr} +a),$$
where  the first inequality follows from $\vert \alpha_{i,I} -\alpha_{j,J}\vert <\frac{1}{3}$, the second inequality follows from $\alpha_{2,\emptyset}> \alpha_{1,\emptyset}-\frac{1}{3}$ and $\alpha_{1,\emptyset}\leq 1$, and the last inequality follows from $ \alpha_{\irr} \leq \frac{9-a+\alpha_{1,\emptyset}}{12}$.
Finally,  the inequalities  \eqref{L:F-nef3d}  that do not involve $\alpha_{2,\emptyset}$ are satisfied because 
$$\alpha_{\irr}\leq \frac{9-a+\alpha_{1,\emptyset}}{12}\leq \frac{10-a}{12}<\frac{11-a-\frac{1}{3}}{12}, $$
while the ones that involve $\alpha_{2,\emptyset}$ are verified by using that $\alpha_{2,\emptyset}>\alpha_{1,\emptyset}-\frac{1}{3}$ and $\alpha_{1,\emptyset}, \alpha_{3,\emptyset}<1$.

$\bullet$ Assume that $\alpha_{\irr}\leq \frac{9-a}{12}(\leq \frac{9-a+\alpha_{1,\emptyset}}{12})$.

Arguing as in the previous case, it is enough to check that $\Upsilon_*(L)$ satisfies the inequalities of Lemma \ref{L:F-nef} with respect to the subset $T$ defined in the statement. Conditions  \eqref{L:F-nef1} and \eqref{L:F-nef2b} are satisfied by the assumptions \eqref{E:inq1} and \eqref{E:inq2} together with the definition of $T$. The inequalities  \eqref{L:F-nef2a} and \eqref{L:F-nef3} are satisfied by Remark \ref{R:simp-ineq}\eqref{R:simp-ineq3bis}.  
\end{proof}

\bibliographystyle{amsalpha}
\bibliography{Library}

\end{document}